\crefname{subsection}{subsection}{subsections}
\newtheorem{thm}{Theorem}[section]
\newtheorem{lem}[thm]{Lemma}
\newtheorem{prop}[thm]{Proposition}
\newtheorem{cor}[thm]{Corollary}
\newtheorem{rmk}[thm]{Remark}
\DeclareMathOperator{\tr}{Tr}
\DeclareMathOperator{\di}{div}
\newcommand{\RR}{\mathbb{R}}     
\newcommand{\NN}{\mathbb{N}}     
\newcommand{\B}{\mathcal{B}}
\newcommand{\D}{\mathcal{D}}
\newcommand{\J}{\mathcal{J}}
\newcommand{\e}{\varepsilon} 
\newcommand{\Dsy}{\mathbb{D}}
\newcommand{\abs}[1]{|{#1}|}
\numberwithin{equation}{section} 
\newcommand{\pder}[2]{\frac{\partial #1}{\partial #2}}
\newcommand{\I}{\mathbb{I}}
\newcommand{\F}{\mathbb{F}}
\newcommand{\Bb}{\mathbb{B}}
\keywords{Fluid-structure interaction, collision, bouncing.}
\subjclass[2020]{74F10, 76D07, 76M10.}
\date{\today}
\begin{document}


\title[] {Contactless rebound of elastic bodies in a viscous incompressible fluid}
\author[] {Giovanni Gravina} 
\address{Department of Mathematical Analysis\\Faculty of Mathematics and Physics\\ 
Charles University\\Prague\\ Czech Republic}
\email {gravina@karlin.mff.cuni.cz} 

\author[] {Sebastian Schwarzacher}
\address{Department of Mathematical Analysis\\Faculty of Mathematics and Physics\\ 
Charles University\\Prague\\ Czech Republic}
\email {schwarz@karlin.mff.cuni.cz}

\author[] {Ond\v{r}ej Sou\v{c}ek}
\address{Mathematical Institute\\Faculty of Mathematics and Physics\\ 
Charles University\\Prague\\ Czech Republic}
\email {soucek@karel.troja.mff.cuni.cz}

\author[] {Karel T\r{u}ma}
\address{Mathematical Institute\\Faculty of Mathematics and Physics\\ 
Charles University\\Prague\\ Czech Republic}
\email {ktuma@karlin.mff.cuni.cz} 

\maketitle

\begin{abstract}
In this paper, we investigate the phenomenon of particle rebound in a viscous incompressible fluid environment. We focus on the important case of no-slip boundary conditions, for which it is by now classical that, under certain assumptions, collisions cannot occur in finite time. Motivated by the desire to understand this fascinating yet counterintuitive fluid-structure interaction, we introduce a reduced model which we study both analytically and numerically. In this simplified framework, we provide conditions which allow to prove that rebound is possible even in the absence of a topological contact. Our results lead to conjecture that a qualitative change in the shape of the solid is necessary for obtaining a physically meaningful rebound. We support the conjecture by also comparing numerical simulations performed for the reduced model with the finite element solutions obtained for the corresponding well-established PDE system. 


\end{abstract}
\tableofcontents

\section{Introduction}

The problem of particle-particle or particle-wall collisions in viscous fluids has important practical applications and has thus been the subject of a plethora of studies, not only experimental and numerical, but also from a purely mathematical standpoint. Yet, the problem is far from being fully resolved and the partial results which are available can often be counterintuitive. An example is given by the simple case of a spherical rigid particle surrounded by a Stokes linear fluid which moves towards a wall. Indeed, it has been known for some time already that if both the particle and the wall are equipped with no-slip boundary conditions, contact cannot take place in a viscous incompressible fluid in finite time without singular forcing. See \cite{Brenner1961, cooley_o-neill-1969, Feireisl2004, Gerard-Varet2014, Gerard-Varet2015, GraHil16, Hesla, MR2354496, HilSekSok18, MR2481302, MR2226121} for an exhaustive and quantitative analysis on that question. 
Despite the fact that in an incompressible fluid with no-slip boundary conditions the contact seems to be impossible, it has been hypothesized that the particle can rebound provided that it is elastic, or in general, when it admits the storage and release of mechanical energy during the rebound, see e.g.\@ \cite{Davis1986}. 

Apart from that, other physical mechanisms allowing for contact or rebound have been suggested and investigated, such as slip boundary conditions~\cite{Gerard-Varet2014}, the fluid compressibility \cite{Feireisl2003a}, pressure-dependent material properties \cite{barnocky-davis-1989}, wall roughness \cite{Gerard-Varet2015}, etc.\@ (see also \cite{gonzalez-phd-2003} or \cite{joseph_zenit_hunt_rosenwinkel_2001} and references therein). \\

In this paper we consider a solid object (also referred to as particle or structure) that may be elastic or rigid and study its motion when thrown towards a rigid wall in a viscous incompressible liquid environment that adheres to all surfaces (that is, under no-slip boundary conditions). We consider both the two or three dimensional case. For simplicity we will assume that the fluid is governed by the steady Stokes equations \eqref{QSS}. We expect, however, that most of our observations should be also relevant in case the fluid is governed by the steady or unsteady Navier--Stokes equation, as it was observed by other authors for similar questions (see e.g.\@ \cite{Gerard-Varet2014, Gerard-Varet2015}).

As is explained and quantified in all detail in the given references (see e.g.\@ \cite{MR2354496, Hesla,MR2481302}) it is mathematically proven that the interplay of the {\em regularity of the surface}, the {\em incompressibility} and the {\em no-slip} boundary condition of the fluid imply that the a smooth, rigid body will not reach any other solid obstacle in finite time. This phenomenon is also known as the no-contact paradox. See \Cref{Fig:rigid_FEM} for a demonstration of this phenomenon in the case of ball falling towards a flat horizontal ramp.

\begin{figure}[!h]
\begin{center}
\includegraphics[width=5cm]{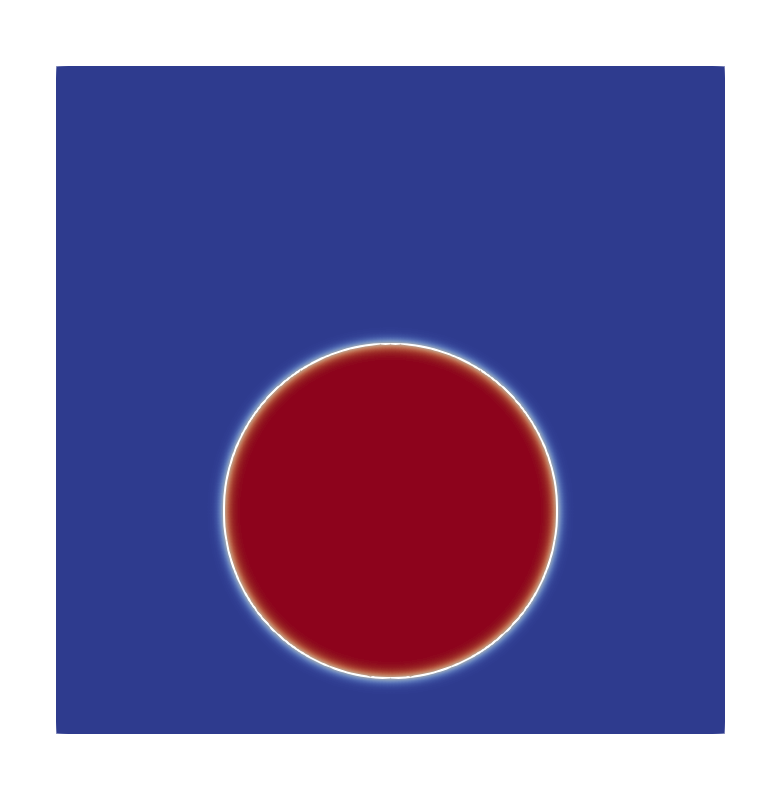}\hspace*{1.4cm}
\includegraphics[width=7.3cm]{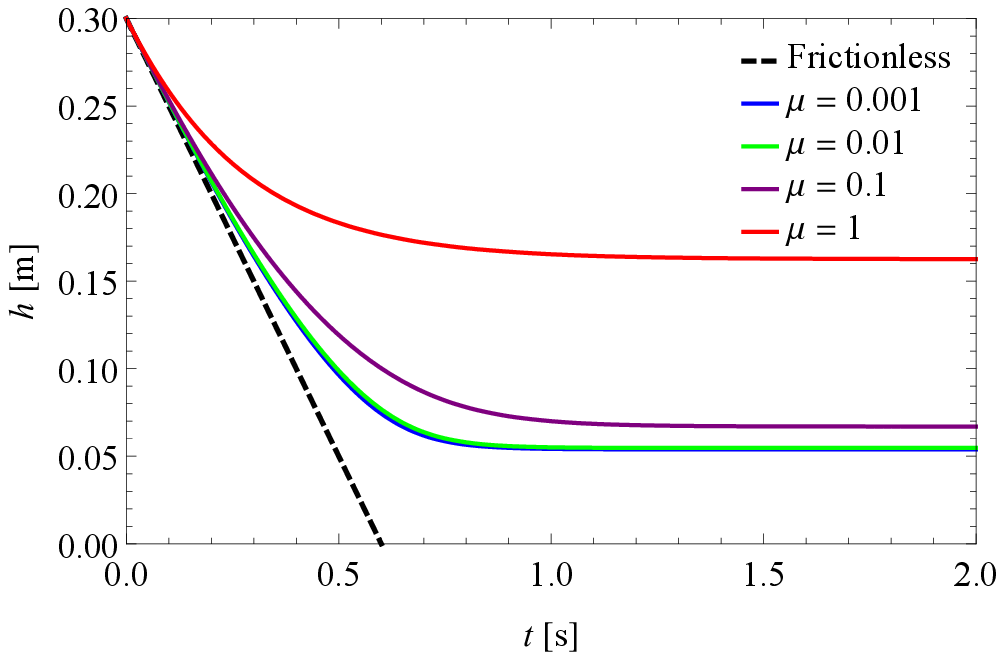}\\
\hspace*{-0.8cm}(a)\hspace*{7.5cm}(b)
\caption{(a) Rigid ball does not touch the bottom. Viscosity $\mu = 0.1$ Pa s, cf.\@ \Cref{fig_velocity} computed with the elastic ball whose deformation enables to get much closer to the boundary. (b) Dependence of $h$ on time $t$ for different viscosities, cf.\@ \Cref{fig:miracle}(b) for elastic ball.} \label{Fig:rigid_FEM}
\end{center}
\end{figure}

In this paper, we aim to advance the understanding of the extent to which the pathological behavior described in the no-contact paradox can affect the dynamics of solid particles in close proximity to the boundary of the container. Throughout the paper, special emphasis is given to the phenomenon of particle rebound. Indeed, the main question that motivated this work can be formulated as follows:

\begin{enumerate}[label=(Q.\arabic*), ref=Q.\arabic*]
\item \label{Q1} {\em Can solid particles rebound in the absence of a topological contact?}
\end{enumerate}

One of our main contributions is that we provide an affirmative answer to \eqref{Q1} in a simplified setting. To be precise, we introduce a system of coupled non-linear ODEs as a toy model approximation for the notoriously challenging fluid-structure interaction problem describing the motion of an \emph{elastic} solid immersed in a viscous incompressible fluid. 

Our design of the reduced model is methodologically inspired by the observation that, under certain simplifying assumptions, the motion of a \emph{rigid} body (described by the coupled fluid-structure interaction PDE system) can be reduced to a single second order ODE (see \cite{MR2354496}; see also \Cref{sec:rigid-body}). Conceptually it is inspired by numerical experiments (see Section~\ref{sec:FEM_results}). In particular, our simplified model (described in detail in \Cref{red-model-subsec}) presents the following two defining features:
\begin{itemize}
    \item[$(i)$] it allows for the storage and release of mechanical energy to account for an elastic response of the solid (see \Cref{fig-spring-mass-model}); 
    \item[$(ii)$] it encodes possible deformations of the body.
\end{itemize}       
While property $(i)$ is a rather natural requirement, a few comments on $(ii)$ are in order. Inspired by our numerical experiments, we allow the fluid-solid interaction to affect the shape of the solid object. It is well understood (see, for example, the discussion in \Cref{drag-section} and the reference therein), that changes in the flatness of the particle in the nearest-to-contact region can have a significant influence on the magnitude of the drag force. Furthermore, since this effect becomes even more dramatic at small distances from other solid objects or from the boundary of the container, we tailor our model to adequately capture this interplay by considering a possible dependence on the deformation parameter in the damping term which represents the drag force. In this simplified setting (see \Cref{mainB}), we show that rebound is indeed possible for sufficiently small values of the viscosity parameter, provided that the solid experiences a substantial flattening. \\

Let us mention here that our investigation uncovers a rather surprising ``trapping'' phenomenon, thus providing further insight into the consequences of the no-contact paradox. In order to illustrate this effect, consider a rigid object, which however allows for the storage and release of (a fraction of) its kinetic energy, as in property $(i)$ above. As a model example, consider a rigid spherical shell with an internal mass-spring energy absorbing mechanism, as sketched in \Cref{fig-spring-mass-model}, falling towards a horizontal wall. The expected dynamics for this particular configuration are as follows: as the outer shell is slowed-down by the viscous forces preventing from collision, part of the kinetic energy of the system is stored in the inner mechanism; the shell can then be expected to rebound once this energy is transferred back to it by the upwards push applied by the mass-spring system. Moreover, one would also anticipate to witness increasingly pronounced rebounds as friction in the fluid is reduced by considering gradually smaller values of the viscosity parameter. However, the analysis of this peculiar fluid-structure interaction performed on our reduced model predicts the following behavior.

\begin{cor}
\label{cor:1}
In the vanishing viscosity limit, the rigid shell system described above falls freely (that is, as it would in the vacuum) towards the wall, to which it then sticks for all times after collision.

\end{cor}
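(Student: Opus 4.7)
\medskip
\noindent\textbf{Proof proposal.} The plan is to view \Cref{cor:1} as a statement about the family of solutions $(h_\mu, s_\mu)$ to the reduced ODE system of \Cref{red-model-subsec}, where $h_\mu$ is the height of the rigid shell above the wall, $s_\mu$ is the internal spring coordinate, and $\mu>0$ is the viscosity. The ``rigid shell'' assumption means that the deformation parameter from $(ii)$ is frozen, so the damping coefficient appearing in the equation for $h$ reduces to a fixed singular function $\mu\,\beta(h)$ with $\beta(h)\to\infty$ as $h\to 0^+$ (this is the ODE reduction of the rigid no-contact paradox recalled in \Cref{sec:rigid-body}). The assertion to prove is then that, along subsequences $\mu_k\to 0$, the functions $h_{\mu_k}$ converge to a limit $h_0$ which coincides with the free-fall trajectory on $[0,t_c]$ (where $t_c$ is the free-fall contact time) and vanishes identically on $[t_c,\infty)$.

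For the pre-contact phase I would apply the standard continuous-dependence theory for ODEs. On any interval $[0,t_c-\delta]$ the free-fall solution $h_0$ stays bounded below by some $c_\delta>0$, and a Gronwall-type argument together with local Lipschitz bounds for the vector field on $\{h\geq c_\delta/2\}$ shows that $h_\mu\to h_0$ uniformly on $[0,t_c-\delta]$ as $\mu\to 0$. This already delivers the ``falls freely'' half of the statement and gives $h_\mu(t_c)\to 0$ with limiting velocity equal to the free-fall velocity $v_c<0$.

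The harder half is the ``sticks for all times after collision'' conclusion, and this is where I expect the real work to lie. My approach is based on the mechanical energy balance for the reduced system: testing the equation of motion against $\dot h_\mu$ shows that the total energy $E_\mu = \tfrac12 M\dot h_\mu^2 + \tfrac12 m\dot s_\mu^2 + V_{\mathrm{spring}}(s_\mu-h_\mu) + Mgh_\mu$ satisfies
\begin{equation*}
\frac{d}{dt}E_\mu(t) = -\mu\,\beta(h_\mu(t))\,\dot h_\mu(t)^2 \leq 0.
\end{equation*}
Since $E_\mu$ is uniformly bounded on $[0,T]$, the dissipation integral $\mu\int_0^T\beta(h_\mu)\dot h_\mu^2\,dt$ is uniformly bounded in $\mu$. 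Using the singularity of $\beta$ at $0$ (the precise form being the one dictated by the lubrication analysis that underlies the no-contact paradox), I would run a trapping argument: any attempted rebound of size $\eta>0$ after time $t_c$ would force $h_\mu$ to traverse a region where $\beta(h_\mu)$ is large, producing an amount of dissipated energy that remains bounded below as $\mu\to 0$, since the smallness of $\mu$ is compensated by the large values of $\beta$ attained along the rebound path. Quantitatively this should be made precise by a lubrication-type estimate of the form $\mu\int \beta(h)|\dot h|\,dt\geq c\,\mu\int |\partial_h B(h)\dot h|\,dt = c\,\mu|B(h(t_2))-B(h(t_1))|$ with $B$ an antiderivative of $\beta$ blowing up at $0$. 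Comparing this with the uniformly bounded energy budget forces the limiting $h_0$ to remain at $0$.

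The main obstacle is precisely this post-contact argument, because it requires a non-trivial coupling of two limits that are in tension: the viscosity is being taken to zero, yet the drag acting on the body must remain strong enough to forbid separation. The balance is delivered by the singularity of $\beta$ at $0$ and by a careful use of the spring potential $V_{\mathrm{spring}}$ (which is bounded from below and provides only a finite amount of stored energy), and I would expect the argument to mirror, in the ODE framework, the mechanism that in the full PDE setting makes \Cref{mainB} require a genuine flattening to produce rebound. Once the limit solution is identified as free-fall on $[0,t_c]$ followed by the constant value $0$ on $[t_c,\infty)$, the uniqueness of this limit along subsequences upgrades the subsequential convergence to full convergence and concludes the proof.
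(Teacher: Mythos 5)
Your pre-contact argument is fine and matches \Cref{t<t0lem} in substance (the paper passes to the limit in the integrated equations after extracting a compact subsequence from the energy bound \eqref{EE}, rather than using Gronwall, but either route works). The gap is in the post-contact ``sticking'' step, exactly where you locate the difficulty. First, the quantitative estimate you propose, $\mu\int\beta(h)|\dot h|\,dt=c\,\mu|B(h(t_2))-B(h(t_1))|$, is an \emph{impulse} (momentum-transfer) computation: it is linear in $\dot h$ and is a perfect differential. The quantity controlled by the energy balance is the \emph{dissipation} $\mu\int\beta(h)\dot h^2\,dt$, which is quadratic in $\dot h$ and is not a perfect differential; comparing the former against ``the uniformly bounded energy budget'' does not close. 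Second, even the impulse along an attempted rebound from the minimal height $h_{\min}$ up to $\eta$ is $\mu|B(\eta)-B(h_{\min})|\approx\mu\,h_{\min}^{1-\alpha}/(\alpha-1)$, and the no-contact mechanism only yields an \emph{upper} bound on such quantities (this is precisely estimate \eqref{mu-lowerbound} in the proof of \Cref{mainB}); there is no reason for this, nor for the dissipation along the rebound, to be bounded below uniformly in $\mu$. And even if a rebound did cost a fixed amount $c_0>0$ of dissipated energy, that would not contradict a finite energy budget unless $c_0$ exceeded the initial kinetic energy.

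The mechanism the paper actually uses (\Cref{mainNB-prop}) couples the momentum and energy identities in the opposite direction, and the contradiction comes from the \emph{approach} phase, not from the rebound. Adding the two equations gives $(1+a)\ddot h_n-\ddot\xi_n=-a\mu_n g(h_n)h_n^{-\alpha}\dot h_n$, whose right-hand side integrates to a perfect differential; hence if $h_n(t)$ stays bounded away from $0$ at some $t\ge t_0$, the total momentum satisfies $(1+a)\dot h_n(t)-\dot\xi_n(t)=(1+a)\dot h_0+\mathcal{O}(\mu_n)$, i.e.\@ it retains essentially the full negative initial value. Young's inequality together with the energy equality \eqref{EE} then forces $B(\xi_n(t))+\mu_n\int_0^t g(h_n)h_n^{-\alpha}\dot h_n^2\,ds\le C(\e)\,\mu_n+\mathcal{O}(\mu_n^2)$, i.e.\@ the dissipation \emph{normalized by} $\mu_n$ must stay below a fixed constant. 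But along the free-fall approach one computes $\int_0^{t_1}h_n^{-\alpha}\dot h_n^2\,ds\to\frac{-\dot h_0}{\alpha-1}\bigl[(h_0+\dot h_0t_1)^{1-\alpha}-h_0^{1-\alpha}\bigr]$, which blows up as $t_1\to t_0^-$; choosing $t_1$ close enough to $t_0$ yields the contradiction. The two ingredients missing from your sketch are precisely this momentum identity (which converts ``the body escaped'' into ``momentum is conserved up to $\mathcal{O}(\mu_n)$'') and the explicit blow-up of the $\mu_n$-normalized approach-phase dissipation.
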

For a proof, we refer the reader to that of \Cref{mainNB} below, in which we show a more general result.

In view of \Cref{cor:1}, throughout the rest of the paper we say that a system does not produce a \emph{physical rebound} if the distance between the body and the wall converges, in the vanishing viscosity limit, to a monotone function in the time variable $t$. Thus, for our purposes, a rebound is said to be physical (or physically meaningful) if it withstands the vanishing viscosity limit.  

Obviously, some crucial aspect is missing in the models considered in \Cref{cor:1} (and \Cref{mainNB}) in order to capture physical bouncing effects.
Their motion is not only in clear contrast with our real-world experience of bouncing objects, but also with numerical simulations for elastic objects--where no-slip boundary conditions are imposed. See \Cref{sec:num} for a reference and the results presented in~\cite{frei2016eulerian,Ric17}. These observations naturally lead to the following question.
\begin{enumerate}[label=(Q.\arabic*), ref=Q.\arabic*]
\setcounter{enumi}{1}
\item \label{Q2} {\em What is the mathematical reason for a physical rebound?}
\end{enumerate}
We present here our scientific progress on this complicated issue. Specifically, our investigations and results prompted us to formulate the following conjecture.
\\

{\bf Conjecture:} {\em A qualitative change in the flatness of the solid body as it approaches the wall, together with some elastic energy storage mechanism within the body, allows for a physically meaningful rebound even for no-slip boundary conditions preventing topological contact.
}
\\

The results presented in this paper (both analytical and numerical) strongly support our leading conjecture. Indeed, it turns out that our ``educated guess'' in the design of the reduced model, for which we are able to prove the possibility of a physical rebound, admits solutions that are in striking match with the finite element solutions (FEM solutions) for a full fluid-structure interaction. Please see \Cref{fig:miracle} where the motions are compared for several values of the viscosity parameter. We refer to \Cref{sec:ODE-FEM_comparison} for a detailed discussion of the comparison between the numerical simulations.

\begin{figure}[!ht]
\begin{center}
\includegraphics[width=7.3cm]{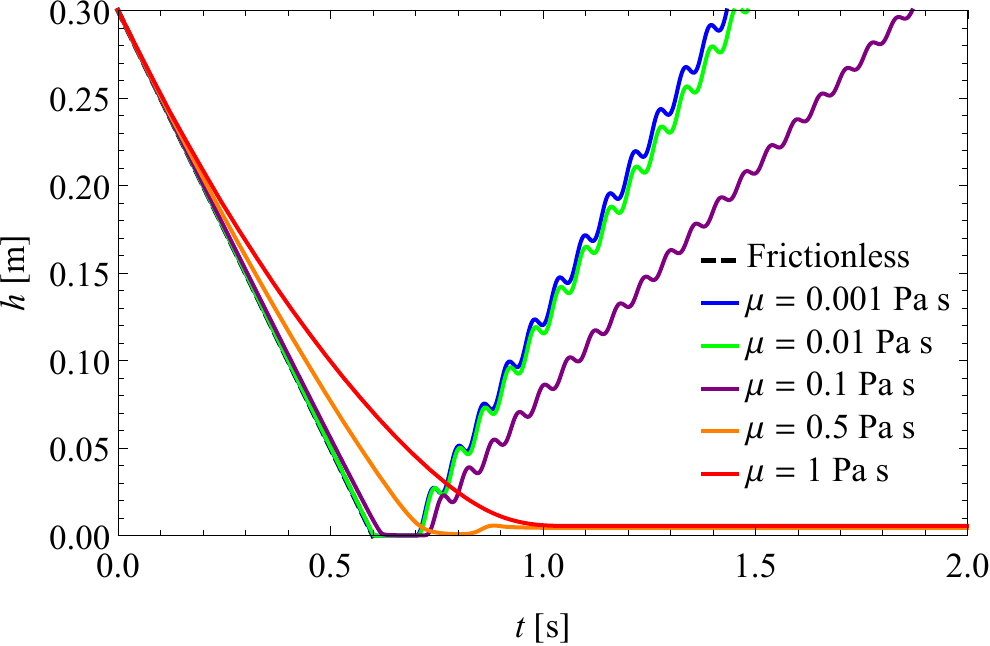}\hfill
\includegraphics[width=7.3cm]{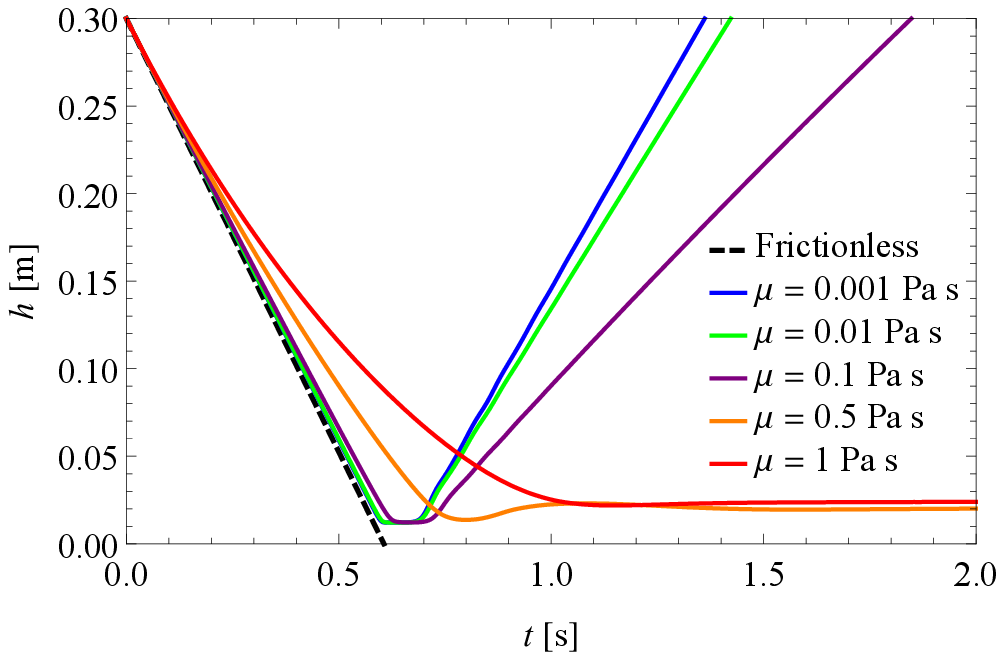}\\
\hspace*{0.7cm}(a)\hspace*{8.3cm}(b)
\caption{Comparison of the simple ODE solution (a) and FEM solution (b).} \label{fig:miracle}
\end{center}
\end{figure}

While this figure allows to speculate that our reduced model could have indeed potentially captured the essential feature for rebound in the absence of collisions, certainly, a precise connection between the models is still missing.
It is worth noting, however, that up to now even the existence theory for bulk elastic solids interacting with fluids is sparse rather sparse (see, for example, \cite{BenKamSch20,grandmont2002existence}). On the other hand, no-contact results (which can be regarded as the starting point of our investigations) for smooth deformable objects can be expected to be true. An important result in this direction is given by the paper \cite{GraHil16}, where the authors consider the case of a beam interacting with a viscous fluid. 

Special effort is put into keeping the assumptions in the analytical section of the paper as general as possible, without hindering its tractability. For this reason, in \Cref{mainresults} we provide an axiomatic set of assumptions which give the reduced model enough flexibility when it comes to fitting it with the full FSI problem.

\subsection{Structure of the paper}
The paper is organized as follows. In \Cref{section:full-FSI} we start by introducing the full fluid-structure interaction model, which we used for our numerical experiments. This is followed by the introduction of our reduced model of ODEs. The section is closed by the derivation of drag-formulas for the family of deformations that we consider for our numerical experiments and which forms the model case for the analysis. \Cref{sec:math} is dedicated to the main mathematical results of this paper and their proofs. In \Cref{mainresults} we introduce our general assumptions and state the main theorems. In particular, we provide conditions that allow to prove or disprove rebound in the vanishing viscosity limit. \Cref{proof-sec} is dedicated to the proofs of these results. In \Cref{sec:num}, we first provide numerical experiments for the reduced model of ODEs. In the following subsection we introduce the numerical set up that allows to capture the bouncing behavior of elastic solids for small viscosities and provide some numerical experiments. We conclude the section with the comparison from a numerical standpoint of the ODE and PDE solutions (see \Cref{fig:miracle}). Finally, in \Cref{ref:conclusion} we summarize and discuss our results.

\section{Modeling of particle-wall approach and rebound in viscous fluids}
In this section, we collect the various models employed throughout the paper for the study of near-to-contact dynamics. 

\subsection{The viscous fluid -- elastic structure formulation}
\label{section:full-FSI}
Consider an incompressible Newtonian fluid filling the region $\mathcal{F}(t)$, which surrounds an elastic particle whose position, at time $t$, will be denoted by $\mathcal{B}(t)$. For simplicity, we assume that the system composed by the fluid and the solid body occupies the entire half-space $\RR^N_+$, that is, $\overline{\mathcal{F}(t) \cup \mathcal{B}(t)} = \{\bm{x} \in \RR^N,x_N\geq 0\}$, where $N = 2$ or $N = 3$. As it is customary in fluid mechanics, the balance equations of linear momentum for the fluid are given in the Eulerian reference frame and read as follows:
\[
\begin{split}
\di \bm{v} &= 0,\\
\rho_f \left(\frac{\partial \bm{v}}{\partial t} + \bm{v} \cdot \nabla \bm{v}\right) & = \di \sigma_f + \rho_f \bm{b},
\end{split} \hspace{1cm} \text{ in } \mathcal{F}(t),
\]
where $\bm{v}(\bm{x},t)$ is the fluid velocity, $\rho_f$ is the constant fluid density, and $\bm{b}$ represents external bulk forces. Here the variable $\bm{x}$ denotes a position in the current (Eulerian) configuration, that is, $\bm{x} \in \mathcal{F}(t)$. We recall that for Newtonian fluids the Cauchy stress tensor takes the form
\[
\sigma_f = -p \mathbb{I}_N + 2 \mu \mathbb{D}(\bm{v}),
\]
where $p$ denotes the pressure, $\mathbb{I}_N$ is the $N$-dimensional identity matrix, $\mu$ is the constant dynamic viscosity, and $\mathbb{D}(\bm{v}) \coloneqq \frac{1}{2}(\nabla \bm{v} + (\nabla \bm{v})^{\rm T})$ is the symmetric part of the gradient of $\bm{v}$. On the other hand, the balance equations for the elastic solid are given in the Lagrangian setting and can be written as
\[
\begin{split}
\rho_s \frac{\partial^2 \bm{\eta}}{\partial t^2} &= \di (J \sigma_s \mathbb{F}^{-\rm T}) + \rho_s \bm{b},\\
J\rho_s &= \rho^0_s
\end{split}
\hspace{1cm} \text{ in } \B_0,
\]
where $\rho_s$ and $\rho_s^0$ denote the density of the elastic solid at time $t$ and in the reference configuration, respectively, $\bm{\eta}(\bm{X},t)$ is the displacement, $\mathbb{F}(\bm{X},t) \coloneqq \nabla_{{\bm X}}{\bm x}({\bm X},t) = \mathbb{I}_N + \nabla_{\bm{X}}\bm{\eta}(\bm{X},t)$ is the deformation gradient, $J \coloneqq \mathrm{det}\,\mathbb{F}$, and finally, $\B_0$ is the reference configuration of the solid. Here the variable $\bm{X}$ denotes a position in the reference (Lagrangian) configuration, that is, $\bm{X} \in \B_0$. We assume that the structure is an incompressible hyperelastic solid, i.e.\@,
\[
J \sigma_s \mathbb{F}^{-\rm T} = \frac{\partial \mathcal{L}}{\partial \mathbb{F}},\qquad
\mathcal{L}(\F,\tilde{p}) \coloneqq \mathcal{W}(\F)-\tilde{p}(J-1),
\]
where $\mathcal{L}$ is the Lagrange function corresponding to the strain energy function $\mathcal{W}$ under the restriction $J = 1$ and $\tilde{p}$ is the associated Lagrange multiplier. It is possible to use different strain energies $\mathcal{W}$ corresponding to different elastic models. As a particular example used later in the numerical computations (see \Cref{sec:num}), we consider an incompressible neo-Hookean solid with elastic strain energy given by
\[
\mathcal{W} \coloneqq \frac{G}{2}(|\F|^2-N).
\]
As one can readily check, in this case the Cauchy stress takes the form
\begin{equation}\label{stress-NeoHooke}
\sigma_s = -\tilde{p} \I_N + \frac{1}{J}\pder{\mathcal{W}}{\F}\F^{\rm T} = -\tilde{p} \I_N + G\Bb = -p\I_N + G\Bb^d,
\end{equation}
where $\Bb^d \coloneqq \Bb-(1/N)(\tr\Bb)\I_N$ is the deviatoric part of the left Cauchy-Green tensor $\Bb \coloneqq \F\F^{\rm T}$ and
$p \coloneqq \tilde{p}-(1/N)\tr\Bb$.

The conditions describing the interaction between the fluid and the solid comprise the continuity of the velocities and of the tractions:
\begin{align}\label{interaction_conditions}
\begin{split}
\bm{v}(\bm{x},t) &= \frac{\partial \bm{\eta}}{\partial t}(\bm{X},t),\\
\sigma_f \bm{n} &= \sigma_s \bm{n},
\end{split}
\hspace{1cm}\text{ on } \partial \B(t),
\end{align}
where $\bm{x} = \bm{X} + \bm{\eta}(\bm{X},t)$
and $\bm{n}$ is the unit normal to the fluid-solid interface. Finally, we prescribe no-slip boundary conditions on the boundary of the cavity, that is,
\[
\bm{v} = \bm{0} \hspace{1cm} \text{ on } \{x_N = 0\}
\]
and at infinity.

We remark that in \Cref{sec:num} we reformulate the mixed Lagrangian--Eulerian problem fully in the Eulerian frame. This allows for an efficient numerical implementation by finite element methods using a level-set function approach.
 
\subsection{Reduced models}
In view of the analytical challenges posed by the full FSI system described in \Cref{section:full-FSI}, in this paper we propose a simplified model which we believe to adequately capture the essential features of the FSI phenomena under consideration, with special emphasis on the questions of contact and rebound. This is achieved via a two-step procedure. First, we consider a completely rigid particle and show that, under certain simplifying assumptions, its dynamics can be replaced by a single ODE. As a next step, we enrich the model by taking into account possible elastic deformations of the particle, which we approximate by a single scalar internal degree of freedom. In our simplified framework, this internal variable will be used to parameterize not only the change in shape of the particle (which will be reflected in the expression for the drag force, see \Cref{drag-section} below), but also its elastic response. The final reduced model takes the form of two coupled ODEs with a highly non-linear damping term.
\subsubsection{Dynamics of a rigid body as a second order ODE with non-linear damping}
\label{sec:rigid-body}
In this section we show that, under certain assumptions, the dynamics of a rigid body in a viscous incompressible fluid can be reformulated as a second order non-linear ODE, which takes the the form 
\[
\ddot h = - d(h)\dot h.
\]
To be precise, following the approach of Hillairet (see Section 3 in \cite{MR2354496}), we assume that the system composed by the fluid and the rigid body occupies the entire half-space $\RR^N_+$, $N = 2, 3$, and that the fluid adapts instantaneously to the solid, so that it can be effectively modeled by the quasi-static Stokes equations. Furthermore, if we suppose that the range of possible motions of the body consists only of translations in the direction $\bm{e}_N$, its position is uniquely determined by its distance from the set $\{x_N = 0\}$, denoted here and in the following with $h$. Let $\B \subset \RR^N_+$ denote the bounded region occupied by the rigid body when $h = 0$ and define
\begin{equation}
\label{fluid-dom}
\B_h \coloneqq \B + h \bm{e}_N, \qquad \mathcal{F}_h \coloneqq \RR^N_+ \setminus \B_h.
\end{equation}
With these notations at hand, and under the assumption that the fluid is homogeneous with density $\rho_f = 1$, our fluid-structure interaction problem is described by the balance equations of linear momentum, which read as
\begin{equation}
\label{QSS}
\left\{
\arraycolsep=1.4pt\def\arraystretch{1.6}
\begin{array}{rll}
- \mu \Delta \bm{v} + \nabla p = & \bm{0} & \text{ in } \mathcal{F}_{h}, \\
\di \bm{v} = & 0 & \text{ in } \mathcal{F}_{h}, \\
\bm{v} = & \dot h \bm{e}_N &  \text{ on } \partial \B_{h}, \\
\bm{v} = & \bm{0} &  \text{ on } \{x_N = 0\}, \\
\bm{v} = & \bm{0} &  \text { at } \infty,
\end{array}
\right.
\end{equation}
coupled with the continuity of the stresses across the fluid-solid surface, which in the present framework can be expressed via
\begin{equation}
\label{QSSS}
m \ddot h = - \int_{\partial \B_h} \left(2\mu \Dsy \bm{v} - p \mathbb{I}_N\right)\bm{n} \,d\mathcal{H}^{N - 1} \cdot \bm{e}_N.
\end{equation}
We recall that, as in the previous subsection, we use $\bm{v}$ and $p$ to denote the velocity field and the pressure of the fluid, respectively. Moreover, the positive constants $\mu$ and $m$ represent the viscosity of the fluid and the mass of the body, respectively. Finally, throughout the section $\bm{n}$ is always used to denote the outer unit normal vector to the fluid domain. The system \eqref{QSS}--\eqref{QSSS} is further complemented with initial conditions of the form
\[
h(0) = h_0 > 0, \quad \dot h(0) = \dot h_0.
\]
The next result combines Lemma 4 and Lemma 5 in \cite{MR2354496}.
\begin{lem}
\label{pLM}
Let $h > 0$ be given and assume that $\partial \B$ is Lipschitz continuous. Then there exist a unique velocity field $\bm{s}_h$ and a pressure field $\pi_h$ such that
\begin{equation}
\label{sh}
\left\{
\arraycolsep=1.4pt\def\arraystretch{1.6}
\begin{array}{rll}
-\Delta \bm{s}_h + \nabla \pi_h = & \bm{0} & \text{ in } \mathcal{F}_h, \\
\di \bm{s}_h = & 0 & \text{ in } \mathcal{F}_h, \\
\bm{s}_h = & \bm{e}_N &  \text{ on } \partial \B_h, \\
\bm{s}_h = & \bm{0} &  \text { on } \{x_N = 0\}, \\
\bm{s}_h = & \bm{0} &  \text{ at } \infty.
\end{array}
\right.
\end{equation}
Moreover, the following statements hold:
\begin{itemize}
\item[$(i)$] $\bm{s}_h$ is the unique global minimizer for the functional 
\begin{equation}
\label{J-def}
\J(\bm{u}; \mathcal{F}_h) \coloneqq \int_{\mathcal{F}_h}|\Dsy \bm{u}|^2\,dx,
\end{equation}
defined over the class 
\[
V_h \coloneqq \left\{\bm{u} \in H^1_0(\RR^N_+;\RR^N) : \di \bm{u} = 0, \text{ and } \bm{u} = \bm{e}_N \text{ on } \partial \B_h \right\}.
\]
In particular, the pressure function $\pi_h$ can be understood as the Lagrange multiplier associated to the divergence-free constraint in $V_h$.
\item[$(ii)$] For every $\tilde{\bm{\varphi}} \in V_h$ and $z \in \RR$, if we let $\bm{\varphi} \coloneqq z\tilde{\bm{\varphi}}$ we have
\begin{equation}
\label{weak-sh}
2 \int_{\mathcal{F}_h}\Dsy \bm{s}_h : \Dsy \bm{\varphi} \,dx = \int_{\partial \B_h}\left( 2\Dsy \bm{s}_h - \pi_h \mathbb{I}_N\right)\bm{n} \,d\mathcal{H}^{N - 1} \cdot z \bm{e}_N.
\end{equation}
\item[$(iii)$] The function $\bm{s}_h$ depends smoothly on the parameter $h$, for all $h \in (0, \infty)$.
\end{itemize}
\end{lem}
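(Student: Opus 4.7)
The plan is to reduce everything to a minimization problem on an affine Hilbert space and then extract the pressure by a de Rham / Bogovskii argument. As a preliminary step I would verify that $V_h$ is nonempty by constructing a divergence-free lift of the boundary datum $\bm{e}_N|_{\partial \B_h}$ that also vanishes on $\{x_N = 0\}$ and at infinity: since $\int_{\partial \B_h}\bm{e}_N \cdot \bm{n}\, d\mathcal{H}^{N-1} = 0$ (apply the divergence theorem to the constant field $\bm{e}_N$ on $\B_h$), a standard Bogovskii-type construction in a thin collar around $\partial \B_h$ produces such a field, compactly supported away from $\{x_N = 0\}$. Hence $V_h = \bm{u}_h + V_h^0$, where $V_h^0$ is the closed subspace of $H^1_0(\RR^N_+;\RR^N)$ consisting of divergence-free fields vanishing on $\partial \B_h$.

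Next, I would prove $(i)$ by the direct method: $\J(\cdot; \mathcal{F}_h)$ is strictly convex and, by Korn's inequality for traceless-on-boundary fields (combined, if needed, with Poincar\'e for fields vanishing on the hyperplane $\{x_N=0\}$), it is coercive on the affine space $V_h$. Weak lower semicontinuity then yields a unique minimizer $\bm{s}_h \in V_h$. The Euler--Lagrange equation reads $\int_{\mathcal{F}_h}\Dsy \bm{s}_h : \Dsy \bm{\psi}\, dx = 0$ for every $\bm{\psi} \in V_h^0$. By de Rham's theorem on the Lipschitz exterior domain $\mathcal{F}_h$ (equivalently, the surjectivity of the divergence between the appropriate function spaces), there exists a distribution $\pi_h$, unique up to an additive constant that we fix by requiring decay at infinity, such that $-\Delta \bm{s}_h + \nabla \pi_h = \bm{0}$ in $\mathcal{F}_h$. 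Together with the constraints defining $V_h$ this establishes \eqref{sh}, and $\pi_h$ plays the role of the Lagrange multiplier for the divergence-free constraint.

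For $(ii)$, given $\tilde{\bm{\varphi}} \in V_h$ and $z \in \RR$, I would test the Stokes equation $-\Delta \bm{s}_h + \nabla \pi_h = \bm{0}$ against $\bm{\varphi} \coloneqq z \tilde{\bm{\varphi}}$ and integrate by parts on $\mathcal{F}_h$. Using $\di \bm{\varphi} = 0$ in $\mathcal{F}_h$, the vanishing of $\bm{\varphi}$ on $\{x_N = 0\}$ and at infinity, and the fact that $\bm{\varphi} = z\bm{e}_N$ on $\partial \B_h$, every bulk term collects into $2\int_{\mathcal{F}_h}\Dsy \bm{s}_h : \Dsy \bm{\varphi}\, dx$ (the replacement of $\nabla \bm{\varphi}$ by $\Dsy \bm{\varphi}$ is justified by the symmetry of $\Dsy \bm{s}_h$ and $\di \bm{\varphi}=0$), while the only nonzero boundary contribution is the integral on $\partial \B_h$, with integrand $(2\Dsy\bm{s}_h - \pi_h \mathbb{I}_N)\bm{n} \cdot z\bm{e}_N$. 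This is exactly \eqref{weak-sh}.

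Finally, for $(iii)$, the plan is to pull the problem back to a fixed domain. Fix a reference height $h_* > 0$ and construct a smooth family of diffeomorphisms $\Phi_h : \mathcal{F}_{h_*} \to \mathcal{F}_h$ that equal the identity outside a neighborhood of $\partial \B_{h_*}$ and depend smoothly on $h$ in a small interval around $h_*$. Pulling \eqref{sh} back through $\Phi_h$ produces a linear Stokes-type system on the $h$-independent domain $\mathcal{F}_{h_*}$ with coefficients that are smooth functions of $h$ and with an $h$-independent boundary trace. Since the corresponding Stokes operator is an isomorphism between the appropriate function spaces (existence and uniqueness at a single $h$ being exactly parts $(i)$--$(ii)$ just proved), the implicit function theorem delivers smooth dependence of $\bm{s}_h\circ \Phi_h$ and $\pi_h\circ \Phi_h$ on $h$, from which the smooth dependence of $(\bm{s}_h,\pi_h)$ itself follows. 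The main obstacle throughout is the technical management of the unbounded fluid domain, both in setting up a well-posed function space for the pressure (e.g.\@ via weighted or homogeneous Sobolev spaces, or truncation combined with local elliptic regularity) and in the initial de Rham step; once these decay/normalization issues are handled consistently, the remaining arguments are standard.
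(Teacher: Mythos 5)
The paper does not actually prove this lemma---it is quoted as a combination of Lemmas 4 and 5 of \cite{MR2354496}---so there is no in-paper argument to compare against; your plan (divergence-free lifting plus the direct method and a de Rham argument for existence of $(\bm{s}_h,\pi_h)$, integration by parts against $\bm{\varphi}=z\tilde{\bm{\varphi}}$ for the traction identity \eqref{weak-sh}, and transport to a fixed reference domain plus the implicit function theorem for smoothness in $h$) is precisely the standard route taken in that reference. The one point to treat with care, which you already flag, is that there is no Poincar\'e inequality on the unbounded half-space: the minimization must be posed in the homogeneous space $\dot H^1_0(\RR^N_+;\RR^N)$ (closure of compactly supported divergence-free fields in the gradient seminorm, with coercivity of $\J$ coming from the Korn identity for $H^1_0$ fields), and the condition ``$\bm{s}_h=\bm{0}$ at infinity'' as well as the normalization of $\pi_h$ are then interpreted through membership in these spaces rather than via a Poincar\'e inequality; with that reading your argument is correct.
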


As a consequence of \Cref{pLM} we see that the dynamics of the system are fully characterized by an initial value problem for a second order ODE with a non-linear damping term. 

\begin{lem}
\label{pde-ode}
Assume that $\partial \B$ is Lipschitz continuous. Then, for every $h_0 > 0$ and $\dot h_0 \in \RR$, the solvability of the fluid-structure interaction problem \eqref{QSS}--\eqref{QSSS} reduces to that of the initial value problem 
\begin{equation}
\label{reduced-pb}
\left\{
\arraycolsep=1.4pt\def\arraystretch{1.6}
\begin{array}{l}
m \ddot h = - \mu \J(\bm{s}_h; \mathcal{F}_h) \dot h , \\
h(0) = h_0,\ \dot h(0) = \dot h_0.
\end{array}
\right.
\end{equation}
\end{lem}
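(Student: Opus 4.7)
The plan is to exploit the linearity of the Stokes system \eqref{QSS} with respect to the prescribed boundary velocity $\dot h \bm{e}_N$. For each fixed $h>0$, \Cref{pLM} furnishes a canonical ``prototype'' pair $(\bm{s}_h,\pi_h)$ solving \eqref{sh} with unit upward velocity on $\partial \B_h$. Because \eqref{QSS} is linear and uniquely solvable for the given boundary and decay conditions, the ansatz
\[
\bm{v}(\cdot,t) \;=\; \dot h(t)\,\bm{s}_{h(t)}, \qquad p(\cdot,t) \;=\; \mu\,\dot h(t)\,\pi_{h(t)}
\]
is the unique solution of \eqref{QSS} at every time at which $h(t)>0$. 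Thus the only genuine unknown left in the coupled system is the scalar function $h(t)$.

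The next step is to substitute this ansatz into the Newton balance \eqref{QSSS}. Since $\Dsy \bm{v} = \dot h\, \Dsy \bm{s}_h$ and $p = \mu \dot h\, \pi_h$, the right-hand side reduces to
\[
-\mu\,\dot h(t)\int_{\partial \B_{h(t)}}\bigl(2\Dsy\bm{s}_{h(t)} - \pi_{h(t)}\I_N\bigr)\bm{n}\,d\mathcal{H}^{N-1}\cdot \bm{e}_N,
\]
a surface integral that depends on $h(t)$ alone. To rewrite this geometric quantity as the Dirichlet-type energy appearing in \eqref{reduced-pb}, I would invoke the weak identity \eqref{weak-sh} in part (ii) of \Cref{pLM} with the admissible choice $\tilde{\bm{\varphi}} = \bm{s}_h$ (which lies in $V_h$) and $z = 1$. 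The left-hand side of \eqref{weak-sh} then becomes $2\int_{\mathcal{F}_h}|\Dsy\bm{s}_h|^2\,dx = 2\,\J(\bm{s}_h;\mathcal{F}_h)$, which identifies the surface integral with a multiple of $\J(\bm{s}_h;\mathcal{F}_h)$ and yields exactly (up to the normalizing constant in the statement) the damping equation $m\ddot h = -\mu\,\J(\bm{s}_h;\mathcal{F}_h)\,\dot h$. The initial conditions carry over unchanged from those of \eqref{QSS}--\eqref{QSSS}.

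Finally, one should verify both directions of the reduction. On the one hand, any classical solution of \eqref{QSS}--\eqref{QSSS} produces, by the uniqueness argument above, a function $h$ solving \eqref{reduced-pb}. Conversely, given a solution $h$ of \eqref{reduced-pb}, the formulas for $\bm{v}$ and $p$ displayed above define a corresponding PDE solution, and reversing the computation shows that \eqref{QSSS} is recovered. The smooth dependence $h \mapsto \bm{s}_h$ granted by part (iii) of \Cref{pLM} guarantees that $h\mapsto \J(\bm{s}_h;\mathcal{F}_h)$ is smooth on $(0,\infty)$, so \eqref{reduced-pb} is a well-posed ODE with a smooth coefficient as long as $h>0$.

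I do not expect any real obstacle: the argument is a direct linearity-plus-uniqueness reduction followed by one integration-by-parts identity. The only bookkeeping one must do carefully is checking that the ansatz satisfies all conditions of \eqref{QSS} (the no-slip at $\{x_N=0\}$ and the decay at infinity are inherited from $\bm{s}_h$) and that $\bm{s}_h$ is itself an admissible test function in \eqref{weak-sh}; both are immediate from \Cref{pLM}.
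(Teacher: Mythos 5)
Your argument is correct and follows essentially the same route as the paper: the linearity ansatz $\bm{v}=\dot h\,\bm{s}_h$, $p=\mu\dot h\,\pi_h$, followed by testing the weak identity \eqref{weak-sh} with (a multiple of) $\bm{s}_h$ to convert the surface traction integral in \eqref{QSSS} into the Dirichlet-type energy $\J(\bm{s}_h;\mathcal{F}_h)$. The only difference is cosmetic — you take $z=1$ where the paper takes $z=\mu\dot h$ — and your remark about the normalizing factor of $2$ is consistent with the paper's own bookkeeping.
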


\begin{proof} Notice that for any given $h > 0$ and $\dot h \in \RR$, letting $\bm{v} \coloneqq \dot h \bm{s}_h$ and $p \coloneqq \mu \dot h \pi_h$ yields a solution to \eqref{QSS}. Moreover, using $\bm{\varphi} \coloneqq \mu \dot h \bm{s}_h$ as a test function in \eqref{weak-sh}, we obtain 
\begin{align}
2\mu \dot h \int_{\mathcal{F}_h} |\Dsy \bm{s}_h|^2\,dx & = \int_{\partial \B_h}\left( 2\Dsy \bm{s}_h - \pi_h \mathbb{I}_N\right)\bm{n} \,d\mathcal{H}^{N - 1} \cdot \mu \dot h \bm{e}_N \notag \\
& = \int_{\partial \B_h}\left( 2 \mu \Dsy \bm{v} - p \mathbb{I}_N\right)\bm{n} \,d\mathcal{H}^{N - 1} \cdot \bm{e}_N. \label{weak-sh-test}
\end{align}
In view of \eqref{weak-sh-test}, we can then rewrite \eqref{QSSS} as 
\[
m \ddot h = - 2\mu \dot h \int_{\mathcal{F}_h} |\Dsy \bm{s}_h|^2\,dx = - \mu \J(\bm{s}_h; \mathcal{F}_h) \dot h .
\]
This concludes the proof.
\end{proof}

\subsubsection{Spring-mass model}
\label{red-model-subsec}
In this subsection, we enrich the model described in \eqref{QSS}--\eqref{QSSS} by considering also elastic deformations of the particle. As a first approximation, we will assume that the deformation of the particle can be described by a single scalar parameter $\xi$, which we can think of as the deformation of an internal spring with stiffness $k$ carrying internal mass $m$, enclosed in a shell of mass $M$ which is rigid with respect to the flow of surrounding fluid, but whose shape may change according to the value of the internal parameter $\xi$ (the relevant notation is summarized in \Cref{fig-spring-mass-model}; see \Cref{rebound-cartoon} for a schematic illustration of contactless rebound for the case of a deformable particle). 

To be precise, let $\mathcal{P}$ denote the class of all admissible particle configurations, that is, $\mathcal{P}$ is the family of all bounded open subsets of $\RR^N$ with Lipschitz continuous boundary and such that the intersection of their respective closures with the hyperplane $\{x_N = 0\}$ consists of only the origin. Given $\B \in \mathcal{P}$, we consider a one parameter family of diffeomorphisms $\{G_{\xi} \colon \B \to G_{\xi}(\B) : \xi \in \RR\}$ such that $G_{\xi}(\B) \in \mathcal{P}$ for every $\xi \in \RR$. Moreover, for every $h > 0$ and every $\xi \in \RR$, we let 
\[
\mathcal{F}_{h,\xi} \coloneqq \RR^N_+ \setminus (G_{\xi}(\B) + h \bm{e}_N)
\]
and consider the energy functional
\[
\J(\bm{u}; \mathcal{F}_{h, \xi}) \coloneqq \int_{\mathcal{F}_{h,\xi}} |\Dsy \bm{u}|^2\,dx.
\]
Compare these definitions with their counterparts in the previous subsection, i.e.\@ \eqref{fluid-dom} and \eqref{J-def}, respectively. In particular, by an application of \Cref{pLM}, we obtain that for each $h > 0$ and each $\xi \in \RR$ there exists a vector field $\bm{s}_{h, \xi}$ that minimizes $\J(\cdot; \mathcal{F}_{h, \xi})$ over the class
\[
V_{h, \xi} \coloneqq \left\{\bm{u} \in H^1_0(\RR^N_+;\RR^N) : \di \bm{u} = 0, \text{ and } \bm{u} = \bm{e}_N \text{ on }  G_{\xi}(\partial \B) + h \bm{e}_N \right\}.
\]
\begin{center}
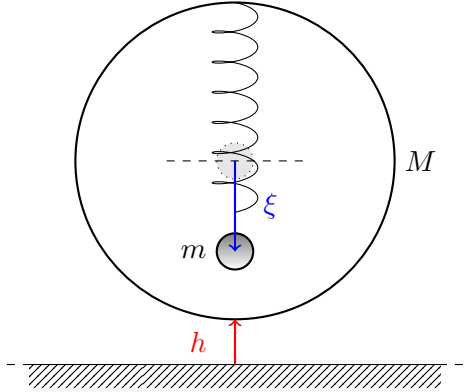
\begin{figure*}[h]\centering
\begin{tikzpicture}[scale=0.6]
\draw [dashed] (0, 0) -- (0.5,0);
\draw (0.5, 0) -- (9.5, 0);
\draw [dashed] (10, 0) -- (9.5,0);
\path [pattern=north east lines, pattern color=black, thin] (0.5,0) rectangle (9.5,-0.5);
\draw [thick] (5,4.5) circle (3.5cm);
\node [left, red] at (4.6, 0.5) {$h$};
\node [right] at (8.5, 4.5) {$M$};
\draw [dotted, fill = black, fill opacity = 0.1] (5,4.5) circle (0.4cm);
\shadedraw [thick] (5,2.5) circle (0.4cm);
\draw[decoration={aspect=0.3, segment length=4mm, amplitude=3mm,coil},decorate] (5,8) -- (5,2.9); 
\node [left] at (4.6, 2.5) {$m$};
\draw [dashed, thin] (3.5, 4.5) -- (6.5, 4.5);
\draw [->, thick, red] (5, 0) -- (5, 1);
\draw [->, blue, thick] (5, 4.5) -- (5, 2.5);
\node [right, blue] at (5.4, 3.5) {$\xi$};
\end{tikzpicture}
\caption{A spherical shell with an inner mass-spring system is surrounded by a viscous incompressible fluid.}
\label{fig-spring-mass-model}
\end{figure*}
\end{center}
Under the assumption that the range of possible motions of the deformable shell consists only of translations in the direction $\bm{e}_N$, reasoning as in \Cref{pde-ode} we see that its dynamics can be formulated as a second order ODE, with the exception that now at each time level $t$, the shape of the shell may change (depending on the value of $\xi$). Consequently, the mechanical force balance for such a system takes the form of the following system of two coupled ODEs:
\begin{align}
\label{red-model2}
M \ddot h &= - k\xi - \mu \J(\bm{s}_{h,\xi}, \mathcal{F}_{h, \xi})\dot{h}\,,\\
m( \ddot h - \ddot \xi) & = k\xi
\label{red-model1}
\end{align}
with initial conditions  
\begin{align*}
h(0) &= h_0,\hspace{1cm}\dot{h}(0)= \dot{h}_0,\\
\xi(0) &= \xi_0,\hspace{1cm}\dot{\xi}(0)= \dot{\xi}_0.
\end{align*}
We remark that equation \eqref{red-model2} expresses the dynamics of the internal mass-spring system in the frame accelerating with the outer shell, while the second equation \eqref{red-model1} is the analogue of \eqref{reduced-pb}, where the additional ``internal'' force is acting on the outer shell and with a more general drag force term which depends not only $h$, but also on the internal deformation $\xi$.

\begin{center}
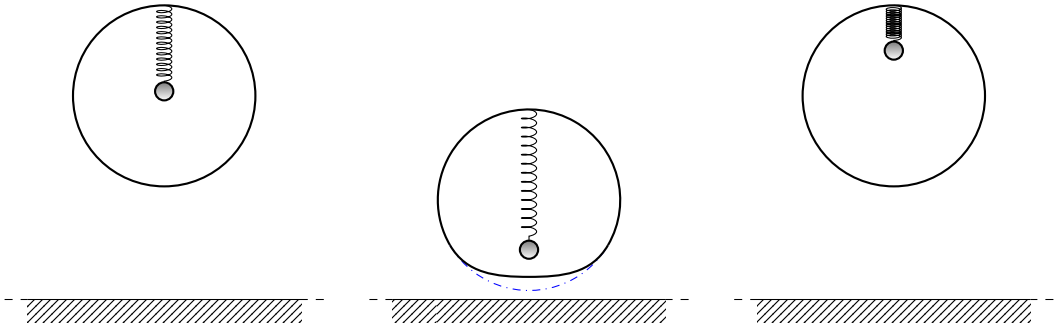
\begin{figure*}[h]\centering
\begin{tikzpicture}[scale=0.6]

\draw [dashed] (0, 0) -- (0.5,0);
\draw (0.5, 0) -- (6.5, 0);
\draw [dashed] (7, 0) -- (6.5,0);
\path [pattern=north east lines, pattern color=black, thin] (0.5,0) rectangle (6.5,-0.5);
\draw [thick] (3.5,4.5) circle (2cm);
\shadedraw [thick] (3.5,4.6) circle (0.2cm); 
\draw[decoration={aspect=0.3, segment length=0.7mm, amplitude=1mm,coil},decorate] (3.5,6.5) -- (3.5,4.8);

\draw [dashed] (8, 0) -- (8.5,0);
\draw (8.5, 0) -- (14.5, 0);
\draw [dashed] (15, 0) -- (14.5,0);
\path [pattern=north east lines, pattern color=black, thin] (8.5,0) rectangle (14.5,-0.5);
\draw [thick] (13.232,1.2) arc (-30:210:2);
\draw [dash dot, blue] (9.768,1.2) arc (210:330:2);
\draw [thick] (13.232,1.2) to [out=240 , in=0] (11.5,0.5) to [out=180, in=-60] (9.768, 1.2);
\shadedraw [thick] (11.5, 1.1) circle (0.2cm);
\draw[decoration={aspect=0.3, segment length=1.2mm, amplitude=1mm,coil},decorate] (11.5, 4.2) -- (11.5,1.3); 

\draw [dashed] (16, 0) -- (16.5,0);
\draw (16.5, 0) -- (22.5, 0);
\draw [dashed] (23, 0) -- (22.5,0);
\path [pattern=north east lines, pattern color=black, thin] (16.5,0) rectangle (22.5,-0.5);
\draw [thick] (19.5,4.5) circle (2cm);
\shadedraw [thick] (19.5,5.5) circle (0.2cm);
\draw[decoration={aspect=0.3, segment length=0.3mm, amplitude=1mm,coil},decorate] (19.5,6.5) -- (19.5,5.7); 

\end{tikzpicture}
\caption{Schematic representation of contactless rebound for a deformable shell with an inner energy absorbing mechanism. The dash-dotted line represents the undeformed surface.}
\label{rebound-cartoon}
\end{figure*}
\end{center}

\subsection{The drag force}
\label{drag-section}
As a consequence of the ODE reformulation of the FSI problem provided in \eqref{reduced-pb} (resp.\@ \eqref{red-model1}), we see that the drag force exerted by the fluid on the solid body, i.e.\@ the term $- \mu \J(\bm{s}_h; \mathcal{F}_h) \dot h$ (resp.\@ $- \mu \J(\bm{s}_{h, \xi}; \mathcal{F}_{h, \xi}) \dot h$), can significantly influence the behavior of the system. Thus, in this section we collect some well known approximations of this force. In order to obtain a precise understanding of the near-to-contact dynamics, the focus of the section is on the dependence of $\J(\bm{s}_h; \mathcal{F}_h)$ on the parameter $h$, with special emphasis on the case $h \to 0^+$. We recall indeed that $h = 0$ corresponds to a collision of the body with the boundary of the container.

To be precise, in the following we present estimates of the drag formulas for both the two and three dimensional case. Furthermore, we compare them also with those resulting from the standard lubrication (Reynolds') approximation. For the purpose of this section, it is not restrictive to consider rigid particles. Additionally, in all cases we shall assume the particle is axi-symmetric with respect to the axis $x_N$ ($N = 2, 3$) and that the part of the boundary $\partial\mathcal{B}$ that is closer to the wall can be described in a neighborhood of the origin by a graph of the form
\begin{align}
\label{eq:drag1}
\psi(x_1) = \gamma |x_1|^{1 + \alpha} \hspace{0.5cm} \text{ if } N = 2,\hspace{1cm} \psi(x_1, x_2) = \gamma (x^2_1 + x_2^2)^\frac{1 + \alpha}{2}\hspace{0.5cm}\text{ if } N = 3.\hspace{1cm}
\end{align}

\subsubsection{Drag force estimates based on the variational formulation}
\label{drag-var}
We begin by noticing that, depending on the smoothness of the immersed particle, the drag force exerted by the viscous fluid can develop a singularity when the distance between the body and the boundary of the cavity tends to zero. This is made precise in the next result, which is due to Starovoitov (see Theorem 3.1 in \cite{MR2044583}). A proof of the theorem is included in \Cref{app-estimates-stokes} for the reader's convenience. 

\begin{thm}
\label{2&3D-LB}
Let $\B$ be an open bounded subset of $\RR^N_+$ with Lipschitz continuous boundary and such that $\partial \B \cap \{x_N = 0\}$ consists of only the origin. For  $\J$ and $\bm{s}_h$ given as in \Cref{pLM}, let $D \colon (0, \infty) \to (0, \infty)$ be defined via 
\[
D(h) \coloneqq \J(\bm{s}_h; \mathcal{F}_h).
\]
Then $D$ is locally Lipschitz continuous. Furthermore, the following statements hold:
\begin{itemize}
\item[$(i)$] if $N = 2$ and there are $\alpha, \gamma, r > 0$ such that in a neighborhood of the origin $\partial \B$ coincides with the graph of $\psi(x_1) \coloneqq \gamma |x_1|^{1 + \alpha}$, for $|x_1| < r$, then there exists a positive constant $c_1$ such that for all $0 < h \le r^{1 + \alpha}$
\[
D(h) \ge c_1h^{\frac{- 3\alpha}{1 + \alpha}};
\]
\item[$(ii)$] if $N = 3$ and there are $\alpha, \gamma, r > 0$ such that in a neighborhood of the origin $\partial \B$ coincides with the graph of $\psi(x_1,x_2) \coloneqq \gamma (x_1^2 + x_2^2)^{\frac{1 + \alpha}{2}}$, for $x_1^2 + x_2^2 < r^2$, then there exists a positive constant $c_2$ such that for all $0 < h \le r^{1 + \alpha}$
\[
D(h) \ge c_2h^{\frac{1 - 3 \alpha}{1 + \alpha}}.
\]
\end{itemize}
\end{thm}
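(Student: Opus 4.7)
The strategy is to combine three ingredients: (i)~the variational characterization $D(h)=\min_{\bm{u}\in V_h}\J(\bm{u};\mathcal{F}_h)$ from \Cref{pLM}(i); (ii)~the Korn-type identity $\int_{\mathcal{F}_h}|\Dsy \bm{u}|^2\,dx = \tfrac{1}{2}\int_{\mathcal{F}_h}|\nabla \bm{u}|^2\,dx$, valid for every $\bm{u}\in V_h$; and (iii)~a Poiseuille-type lower bound in the narrow gap below the body, forced upon any admissible $\bm{u}$ by incompressibility together with the prescribed no-slip/body-motion data. In view of~(i) it suffices to establish the claimed lower bound for $\J(\bm{u};\mathcal{F}_h)$ for every $\bm{u}\in V_h$, and by~(ii) this reduces to lower-bounding $\int_{\mathcal{F}_h}|\nabla \bm{u}|^2\,dx$. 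To derive~(ii) I would extend $\bm{u}$ by the constant $\bm{e}_N$ inside $\B_h$ (which preserves $\di\bm{u}=0$ and contributes nothing to either functional), integrate by parts on the whole half-space, and use $\bm{u}=\bm{0}$ on $\{x_N=0\}$ and at infinity to discard the boundary terms.

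By convexity of $\J$ and the symmetry of $\psi$, one may additionally assume that $\bm{u}$ is axi-symmetric when $N=3$ and mirror-symmetric with respect to $x_1\mapsto -x_1$ when $N=2$, since these symmetrizations preserve membership in $V_h$ and do not increase $\J$. Writing $\Omega_h := \{(x',x_N)\in\RR^N_+ : |x'|<r,\ 0<x_N<H(x')\}$ with $H(x') := h+\psi(x')$, I would then apply the divergence theorem to suitable slabs inside $\Omega_h$; the boundary values of $\bm{u}$ force the transverse flux identities $\int_0^{H(x_1)} u_1(x_1,x_2)\,dx_2 = -x_1$ in the two-dimensional case, and $\int_0^{H(\rho)} u_\rho(\rho,x_3)\,dx_3 = -\rho/2$ in the three-dimensional axi-symmetric case. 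Since the relevant tangential or radial component $u_\tau$ vanishes both at $x_N=0$ and at $x_N=H(x')$, a Poincar\'e inequality on the interval $(0,H(x'))$ combined with Cauchy--Schwarz yields the lubrication-type bound
\[
\int_0^{H(x')} |\partial_{x_N} u_\tau|^2\,dx_N \;\ge\; \frac{c\,|x'|^2}{H(x')^3}.
\]

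Integrating this estimate over the projected disk $\{|x'|<r\}$ reduces the task to showing that
\[
\int_{|x'|<r} \frac{|x'|^2}{(h+\gamma|x'|^{1+\alpha})^3}\,dx'
\]
is bounded below by the claimed power of $h$. The rescaling $x' = (h/\gamma)^{1/(1+\alpha)}\,y'$ converts the integral into $h^{-3 + (N+1)/(1+\alpha)}$ multiplied by a dimensionless integral which converges (for every $r>0$, with value bounded below uniformly in $h \le r^{1+\alpha}$) thanks to $\alpha>0$; simplifying the exponent gives $-3\alpha/(1+\alpha)$ when $N=2$ and $(1-3\alpha)/(1+\alpha)$ when $N=3$, exactly as stated. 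The main delicate point I anticipate is the rigorous deployment of the Korn-type identity~(ii) on the unbounded half-space (standard by density of smooth compactly supported divergence-free fields, but one must take care of the non-trivial trace $\bm{e}_N$ on $\partial \B_h$ before extending by a constant); a secondary concern is to verify that the Poincar\'e constant appearing in the Poiseuille step remains uniform in $x'$ as $h\to 0^+$, despite the gap height $H(x')$ degenerating to zero at the origin.
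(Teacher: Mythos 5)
Your proposal follows essentially the same route as the paper's proof in \Cref{app-estimates-stokes}: reduce to a lower bound on $\|\nabla\bm{u}\|_{L^2}^2$ for arbitrary competitors via the variational characterization and the Korn-type identity for divergence-free fields in $H^1_0(\RR^N_+)$ (which the paper uses tacitly and you rightly flag), extract a flux constraint across vertical cross-sections of the gap from $\di\bm{u}=0$ together with the boundary data, convert it into a lubrication-type lower bound on the vertical derivative of the tangential/radial component via Poincar\'e and Cauchy--Schwarz, and integrate in the radial variable. Your pointwise-in-$x'$ bound followed by the rescaling $x'=(h/\gamma)^{1/(1+\alpha)}y'$ is an equivalent (marginally cleaner) packaging of the paper's step of integrating the cross-sectional inequality over $\delta\in(0,r)$ and choosing $r=h^{1/(1+\alpha)}$.

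One step as written is not justified: the symmetrization. The hypotheses only say that $\partial\B$ coincides with the graph of $\psi$ \emph{in a neighborhood of the origin}; $\B$ is not assumed globally mirror- or axi-symmetric, so the reflected (resp.\@ rotated) field need not satisfy the trace condition on $\partial\B_h$ and hence need not lie in $V_h$. This is harmless because the symmetrization is dispensable: the divergence theorem on $\{|x'|<\delta,\ 0<x_N<H(x')\}$ gives the two-sided flux identity (in $2$D, $\int_0^{H(\delta)}\bigl(u_1(\delta,\cdot)-u_1(-\delta,\cdot)\bigr)\,dx_2=-2\delta$), and one simply keeps both terms, as the paper does. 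Two further minor points: for $N=3$ and $\alpha\le 1/3$ the rescaled dimensionless integral diverges over all of $\RR^2$ rather than converging, but since you only need a lower bound, uniform in $h\le r^{1+\alpha}$, restricting to a fixed ball $\{|y'|<\gamma^{1/(1+\alpha)}\}$ contained in the rescaled domain suffices; and neither your argument nor the paper's appendix addresses the local Lipschitz continuity of $D$, which in the paper is inherited from the smooth dependence of $\bm{s}_h$ on $h$ stated in \Cref{pLM}$(iii)$.
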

Roughly speaking, \Cref{2&3D-LB} presents us with the crucial observation that the asymptotic behavior of $D$ is deeply connected to the regularity of $\partial \B$ in a neighborhood of the nearest point to the fixed boundary of the container. It is worth noting that since for every $t \in (-1,1)$ one has that
\[
\frac{t^2}{2} \le 1 - \sqrt{1 - t^2} \le t^2,
\]
an application of \Cref{2&3D-LB} with $\alpha = 1$ yields that if $N = 2$ and $\B$ is a disk then $D(h) \gtrsim h^{-3/2}$, while if $N = 3$ and $\B$ is a sphere then $D(h) \gtrsim h^{-1}$. In particular, as illustrated in Theorem 3.2 in \cite{MR2044583} (see also Theorem 3 in \cite{MR2354496}), one can then transform the differential equation obtained in \Cref{pde-ode} into a differential inequality; this, in turn, can be integrated to show that the rigid body cannot collide with the boundary of the container in finite time. 

It is worth noting that the proof of the no-collision result in the papers \cite{MR2592281,MR2354496,MR2481302}, where the fluid is modeled by the Navier--Stokes equations, relies on the construction of a good (localized) approximation of the solution to the associated Stokes problem. A particularly interesting corollary of these constructions is that the asymptotic lower bounds provided by \Cref{2&3D-LB} are, in most cases, optimal. To be precise, we have the following theorem (for more information, see also the discussion at the end of \Cref{app-estimates-stokes}).
\begin{thm}
\label{2&3D-UB}
Under the assumptions of \Cref{2&3D-LB}, there exist two positive constants $C_1, C_2$ such that for all $h$ sufficiently small
\begin{equation}
\label{drag-UB}
D(h) \le 
\left\{
\arraycolsep=1.4pt\def\arraystretch{1.6}
\begin{array}{ll}
\displaystyle C_1 h^{\frac{- 3\alpha}{1 + \alpha}} & \text{ if } N = 2, \\
\displaystyle C_2 h^{\frac{1 - 3 \alpha}{1 + \alpha}} &  \text{ if } N = 3 \text{ and } \alpha > 1/3, \\
\displaystyle C_2 |\log h| &  \text{ if } N = 3 \text{ and } \alpha = 1/3, \\
\displaystyle C_2 &  \text{ if } N = 3 \text{ and } \alpha < 1/3.
\end{array}
\right.
\end{equation}
\end{thm}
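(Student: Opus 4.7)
The plan is to exploit the variational characterization of $\bm{s}_h$ from \Cref{pLM}: since $\bm{s}_h$ is the global minimizer of $\J(\cdot;\mathcal{F}_h)$ over $V_h$, it suffices to exhibit, for each small $h>0$, a test field $\bm{u}_h\in V_h$ whose Dirichlet energy $\J(\bm{u}_h;\mathcal{F}_h)$ satisfies the desired upper bound; this automatically gives $D(h)=\J(\bm{s}_h;\mathcal{F}_h)\le \J(\bm{u}_h;\mathcal{F}_h)$. So the entire proof is a test-function construction followed by an energy estimate.

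First I would split $\mathcal{F}_h$ into a ``near'' region, say $\Omega_h^{\rm in}\coloneqq \mathcal{F}_h\cap\{|x'|<r\}$ where the boundary of $\B_h$ is captured by the graph of $x_N=h+\psi(x')$, and a ``far'' region $\Omega_h^{\rm out}$ where I can pick any fixed divergence-free extension with compact support that matches the inner field on the overlap and vanishes at infinity. The far piece contributes an $O(1)$ amount of Dirichlet energy, uniformly in $h$, so the whole game takes place in $\Omega_h^{\rm in}$. In the near region I would use a lubrication ansatz adapted to the gap thickness $\delta(x')\coloneqq h+\psi(x')$: in 2D, choose a scalar stream function $\Psi(x_1,x_N)$ with $\Psi=0$ on $\{x_N=0\}$ and $\Psi=-x_1$ on the graph $\{x_N=\delta(x_1)\}$, interpolated cubically in $x_N/\delta(x_1)$, and set $\bm{u}\coloneqq(-\partial_{x_N}\Psi,\partial_{x_1}\Psi)$, which is automatically divergence-free and satisfies $\bm{u}=\bm{e}_N$ on the body. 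In 3D, axisymmetry lets me repeat the construction using the Stokes stream function in cylindrical coordinates, so the same Ansatz works with $|x'|$ replacing $x_1$ and a $1/|x'|$ factor in the Stokes operator.

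With this Ansatz, the scaling of the integrand is dictated by incompressibility: the vertical component is $O(1)$, so the horizontal component is $O(|x'|/\delta(x'))$ and its vertical derivative, which dominates $|\Dsy\bm{u}|$, is $O(|x'|/\delta(x')^2)$. Integrating over the gap yields, up to constants,
\[
\J(\bm{u};\Omega_h^{\rm in}) \lesssim \int_{\{|x'|<r\}} \frac{|x'|^2}{\delta(x')^{3}}\,dx'.
\]
Substituting $x'=h^{1/(1+\alpha)}y$ collapses the integral to a dimensionless profile integral times the scaling factor $h^{(N-1+2)/(1+\alpha)-3}=h^{(N+1-3(1+\alpha))/(1+\alpha)}$, giving $h^{-3\alpha/(1+\alpha)}$ for $N=2$ and $h^{(1-3\alpha)/(1+\alpha)}$ for $N=3$, matching \eqref{drag-UB}.

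The delicate part is the 3D case when $\alpha\le 1/3$, since there the profile integral $\int_0^\infty y^3/(1+\gamma y^{1+\alpha})^3\,dy$ either diverges logarithmically (at $\alpha=1/3$) or outright (at $\alpha<1/3$). The fix is to cut the integration at $|x'|\sim r$ rather than $\infty$: for $\alpha=1/3$ this produces $\int_0^{r h^{-1/(1+\alpha)}} y^{-1}\,dy$, i.e.\ a $|\log h|$ factor with no additional power of $h$; for $\alpha<1/3$ the integral is dominated by its upper limit, producing an $O(1)$ contribution. In both cases the remaining bulk integral over $\Omega_h^{\rm out}$ is also $O(1)$, so the proof concludes. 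I expect the main technical obstacle to be checking that the lubrication Ansatz can be glued to the outer compactly supported extension while remaining divergence-free and satisfying $\bm{u}=\bm{e}_N$ exactly on $\partial\B_h$; this is standard but cumbersome, and is usually accomplished by introducing a Bogovski-type corrector on the overlap annulus, whose energy is easily shown to be bounded by the same scales as above.
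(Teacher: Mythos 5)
Your proposal is correct and follows essentially the same route as the paper: the paper's own argument likewise reduces the bound to exhibiting a competitor in $V_h$ via the minimality of $\bm{s}_h$, and then defers to the lubrication-type stream-function constructions of Hillairet, G\'erard-Varet--Hillairet, and Hillairet--Takahashi, which is exactly the near-field ansatz (with the correct scaling $h^{(N+1-3(1+\alpha))/(1+\alpha)}$ and the cutoff at $|x'|\sim r$ producing the logarithmic and $O(1)$ cases for $N=3$, $\alpha\le 1/3$) that you outline. Your treatment of the gluing via a Bogovskii corrector and the $O(1)$ far-field contribution matches the standard implementation of those references.
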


We conclude the section by observing that, in the present framework, if $\partial \B$ is sufficiently regular so that the body is prevented from colliding in finite time with the boundary of the container, then the system cannot produce a rebound.  
\begin{cor}
\label{rigid-no-rebound}
Let $h$ be a solution to \eqref{reduced-pb} with initial conditions $h_0 > 0$ and $\dot h_0 < 0$, and assume that $h(t) > 0$ for every $t > 0$. Then $h$ is a monotone function.
\end{cor}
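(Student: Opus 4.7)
The plan is to exploit the linearity of the damped ODE in the variable $\dot h$. Writing \eqref{reduced-pb} as
\[
\ddot h(t) = -\frac{\mu}{m}\, D(h(t))\, \dot h(t), \qquad D(h) \coloneqq \J(\bm{s}_h; \mathcal{F}_h),
\]
and treating $t \mapsto D(h(t))$ as a prescribed coefficient, this is a first-order linear homogeneous equation for $\dot h$. If the coefficient is locally integrable, the explicit representation
\[
\dot h(t) = \dot h_0\, \exp\!\left(-\frac{\mu}{m}\int_0^t D(h(s))\,ds\right)
\]
shows that $\dot h$ never changes sign, and thus that $\dot h(t) < 0$ for all $t \ge 0$, which gives strict monotonicity of $h$.

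To make this rigorous, I first need the integrability of $s \mapsto D(h(s))$ on every compact time interval $[0,T]$. By the standing assumption $h(t) > 0$ for all $t > 0$, and since $h$ is continuous (being a $C^2$ solution), $h$ attains a positive minimum $h_{\min}$ on $[0,T]$. By \Cref{pLM}(iii) together with \Cref{2&3D-LB}, the map $D$ is locally Lipschitz continuous on $(0,\infty)$; in particular $D \circ h$ is continuous, hence bounded, on $[0,T]$, so the integral defining the exponential is finite.

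With the integrability in hand, the representation formula follows from multiplying the ODE by the integrating factor $\exp\!\bigl(\frac{\mu}{m}\int_0^t D(h(s))\,ds\bigr)$ and integrating, exactly as for any scalar linear equation. Since the exponential is strictly positive and $\dot h_0 < 0$, we conclude $\dot h(t) < 0$ for all $t \ge 0$, which means $h$ is strictly decreasing.

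This argument is essentially routine, so I do not anticipate a substantive obstacle; the only point that requires care is ensuring that the coefficient $D(h(\cdot))$ is well defined and integrable along the whole trajectory, which is precisely why the hypothesis $h(t)>0$ for every $t>0$ is invoked. No lower or upper drag estimate from \Cref{2&3D-LB} or \Cref{2&3D-UB} is needed beyond positivity and continuity of $D$.
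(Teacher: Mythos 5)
Your proof is correct, and it takes a genuinely different route from the paper. You exploit the linearity of \eqref{reduced-pb} in $\dot h$ and integrate explicitly: the factor $\exp\bigl(-\tfrac{\mu}{m}\int_0^t D(h(s))\,ds\bigr)$ is well defined because the hypothesis $h>0$ on $[0,T]$ together with the (local Lipschitz, hence in particular continuous) regularity of $D$ makes the coefficient bounded on compact time intervals, and positivity of the exponential then forces $\dot h$ to retain the sign of $\dot h_0$. The paper instead argues by contradiction via uniqueness: if $\dot h(\tau_1)=0$ and $h$ later exceeds $h(\tau_1)$, then the constant function $h\equiv h(\tau_1)$ is the unique solution of the initial value problem restarted at $\tau_1$ (Picard--Lindel\"of, using the local Lipschitz continuity of $D$ away from $h=0$), contradicting non-monotonicity. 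Your argument is slightly more elementary and more informative -- it yields the explicit sign and an exponential decay representation for $\dot h$, hence \emph{strict} monotonicity, and it only needs continuity (integrability along the trajectory) of $D$ rather than the Lipschitz property required for uniqueness. The paper's argument, on the other hand, generalizes more readily to settings where an explicit integrating factor is unavailable (e.g.\@ the coupled system \eqref{gf}, where the same uniqueness-based reasoning underlies the trapping discussion). Both are complete proofs of the stated corollary.
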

\begin{proof}
Arguing by contradiction, assume that there are $\tau_1 < \tau_2$ such that $\dot h(\tau_1) = 0$ and $h(\tau_2) >  \tilde{h} \coloneqq h(\tau_1)$. Since $\min\{h(t) : t \in [0, \tau_2]\} > 0$ and by recalling that $D$ is locally Lipschitz continuous in $(0, \infty)$, we see that the initial value problem \eqref{reduced-pb} admits a unique solution in $[0, \tau_2]$, which must therefore agree with $h$. Notice, however, that $h$ is also the unique solution to the initial value problem satisfying \eqref{reduced-pb} on $[\tau_1,\tau_2]$ with initial conditions $h(\tau_1)=\tilde{h}$ and $\dot{h}(\tau_1)=0$. Consequently $h\equiv \tilde{h}$ on $[\tau_1,\tau_2]$, which contradicts $h(\tau_2)>\tilde{h}$.
\end{proof}

\subsubsection{Drag force estimates based on Reynolds' approximation}
\label{drag-reynolds-app}
Similarly to above, throughout the subsection we consider an axi-symmetric particle $\B$. In particular, if $\partial \B$ satisfies \eqref{eq:drag1}, then in a neighborhood of the nearest-to-contact point $\partial \B_h$ (see \eqref{fluid-dom}) can be conveniently described as the graph of
\begin{equation}
\label{gh+}
g(r) = h + \gamma r^{1 + \alpha},
\end{equation}
where $r$ denotes the distance from the symmetry axis. With this notation at hand and in view of the lubrication (Reynolds') approximation (see \Cref{appendix-Rey}), we obtain that the vertical component of the drag force exerted on the particle can be effectively estimated by
\begin{equation}
\label{eq-reynolds-drag}
F_{\operatorname{lub}} \coloneqq -12\mu\dot{h} 
\left\{
\arraycolsep=3.4pt\def\arraystretch{2.2}
\begin{array}{ll}
\displaystyle 2\int_{0}^\infty\int_{r}^\infty \frac{r'}{g(r')^3} \,dr'dr & \text{ if } N = 2, \\
\displaystyle \pi \int_{0}^\infty \int_{r}^\infty \frac{r r'}{g(r')^3} \,dr'dr & \text{ if } N = 3.
\end{array}\right.
\end{equation}
An exact comparison of the drag formulas in \eqref{eq-reynolds-drag} with the resulting expressions derived in \Cref{drag-var} is only possible for particular values of $\alpha$, for which the Reynolds based expression can be integrated analytically. In particular, assuming circular (when $N = 2$) or spherical (when $N = 3$) shape of the solid ball with radius $R$, we get
\[
g(r) \coloneqq h + R - \sqrt{R^2-r^2} \sim h + \frac{r^2}{2R^2}.
\]
Substituting $\alpha = 1$ and $\gamma = 1/(2R)$ into \eqref{eq-reynolds-drag} allows to analytically resolve the integrals, which ultimately yields $F_{\operatorname{lub}} = - \mu D_{\operatorname{lub}}(h) \dot h$, where
\begin{equation}
\label{eq:drag2}
D_{\operatorname{lub}}(h) \coloneqq 
\left\{
\arraycolsep=3.4pt\def\arraystretch{2.2}
\begin{array}{ll}
\displaystyle 3 \sqrt{2} \pi \left( \frac{R}{h} \right)^{\frac{3}{2}} & \text{ if } N = 2, \\
\displaystyle 6 \pi \frac{R^2}{h} & \text{ if } N = 3;
\end{array}
\right.
\end{equation}
see also eq.\@ (7-270) in \cite{Leal1992}, eq.\@ (2.18) in \cite{Brenner1961}, and and eq.\@ (1.1) in \cite{Cox1971}.

On the other hand, in order to compare the expressions for the drag force for other values of $\alpha$, we compute numerically the lubrication theory shape factor $D_{\operatorname{lub}}$ from \eqref{eq-reynolds-drag} and compare it with the analytical estimates in \eqref{drag-UB} in \Cref{fig-drag-force-comparison}. Note that the match is very good for the case $N = 2$ and reasonable for the case $N = 3$ at least in the vicinity of $\alpha = 1$, corresponding to the sphere.

\begin{figure}[h!]
\begin{tabular}{cc}
\includegraphics[width=13cm]{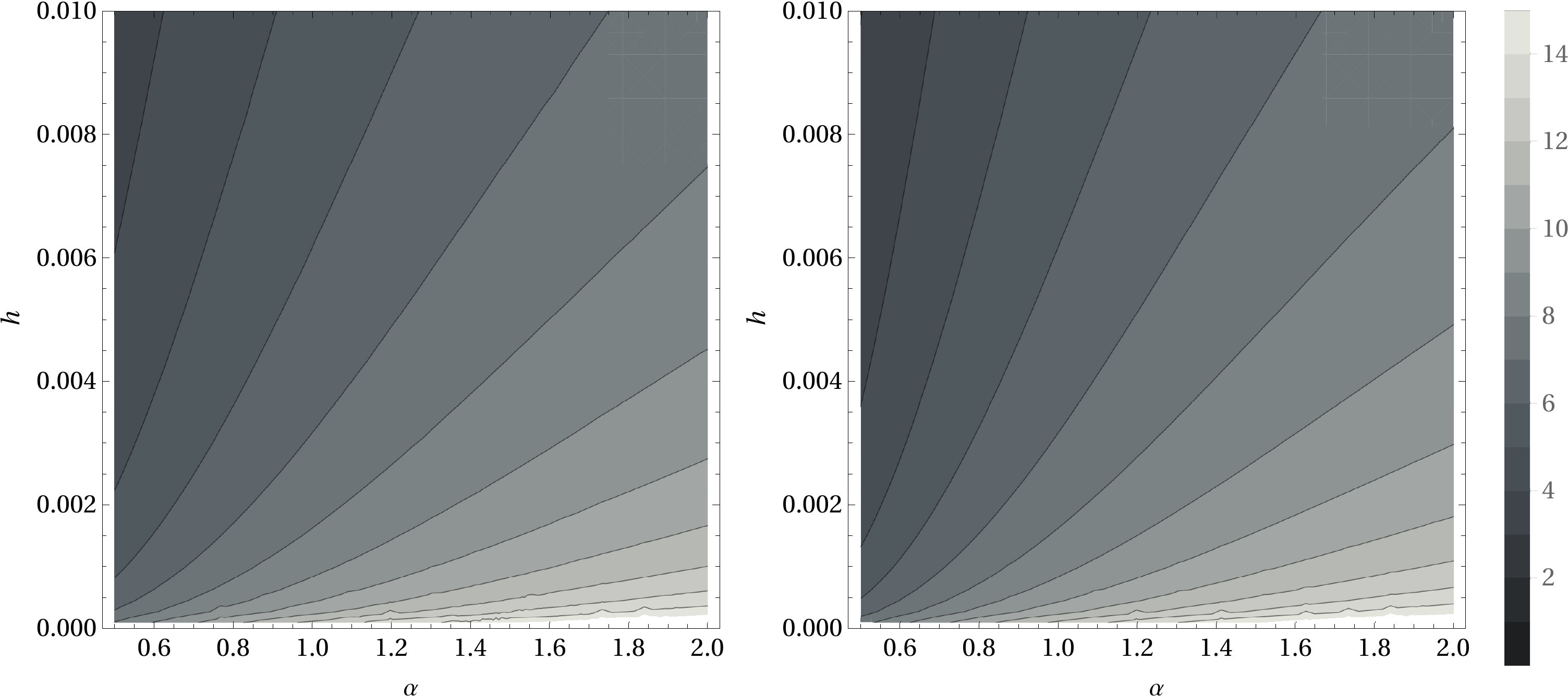}\\
\includegraphics[width=13cm]{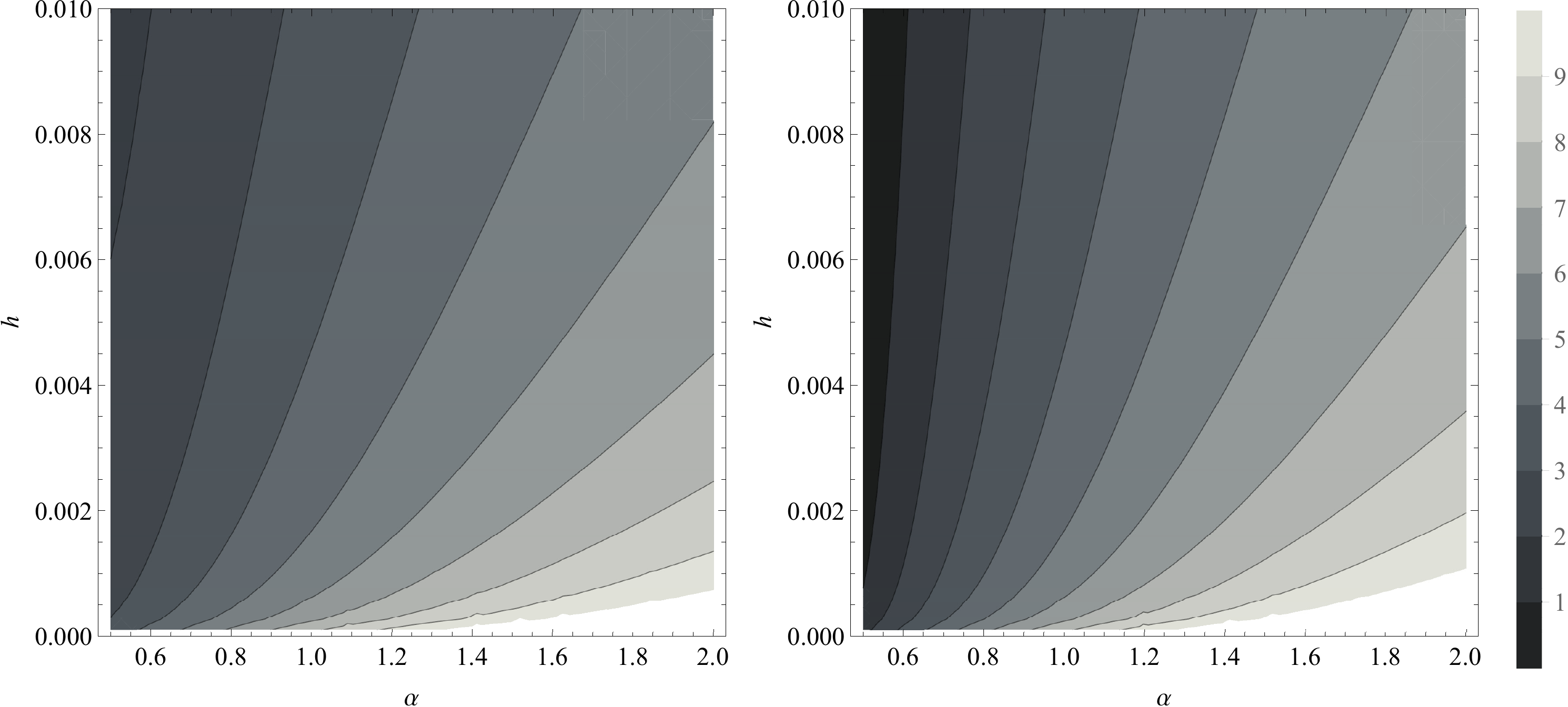}
\end{tabular}
\caption{Logarithm of the drag force shape factor based on the Reynolds approximation (left) and on the analytical estimate (right) for $N = 2$ (top row) and $N = 3$ (bottom row).}
\label{fig-drag-force-comparison}
\end{figure}

\section{Global well-posedness and qualitative behavior of solutions to the reduced model}
\label{sec:math}
In this is section, we undertake a rigorous analytical study of the reduced model that was previously introduced in \Cref{red-model-subsec}. We begin by addressing the question of global well-posedness and we then proceed to investigate qualitative properties of solutions as we vary the viscosity parameter $\mu$. In this direction, we present two results which highlight very different behaviors with regard to particle rebound. For clarity of exposition, we postpone the proofs to \Cref{proof-sec}. In addition, we refer the reader to \Cref{sec:ODE_num} for some numerical experiments on the model considered in this section.

\subsection{Statement of the main results}
\label{mainresults}
Throughout the section we consider the system of ODEs
\begin{equation}
\label{gf}
\left\{
\arraycolsep=1.4pt\def\arraystretch{1.6}
\begin{array}{l}
\ddot h - \ddot \xi  = a b(\xi), \\
\ddot h = - b(\xi) - \mu \D(h,\xi)\dot h, \\
h(0) = h_0,\ \dot h(0) = \dot h_0, \\
\xi(0) = \xi_0,\ \dot \xi(0) = \dot \xi_0.
\end{array}
\right.
\end{equation}
Here $a$ and $\mu$ are positive constants, while the functions $b$ and $\D$ serve as proxies for the elastic response of the solid and the drag force, respectively. Notice indeed that the system given by \eqref{red-model2}--\eqref{red-model1} is a particular case of \eqref{gf}, corresponding to the choices 
\begin{equation}
\label{choices-red}
b(\xi) \coloneqq \frac{k \xi}{M}, \qquad  a \coloneqq \frac{M}{m},\qquad  \D(h, \xi) \coloneqq \frac{\J(\bm{s}_{h, \xi}, \mathcal{F}_{h, \xi})}{M}.  
\end{equation}

In our first result, the aim is to identify conditions for which the body is prevented from colliding with the boundary of the container in finite time. Our analysis is in spirit very close to that of \cite{MR2354496} (see also \Cref{2&3D-LB} and the subsequent discussion). To this end, we define
\[
B(y) \coloneqq \int_0^y b(w)\,dw,
\] 
and make the following assumptions: 
\begin{enumerate}[label=(B.\arabic*), ref=B.\arabic*]
\item \label{B1} $b \colon \RR \to \RR$ is locally Lipschitz continuous;
\item \label{B2} $B$ is coercive, that is, $B(y) \to \infty$ as $|y| \to \infty$.
\end{enumerate}
Additionally, on $\D \colon (0, \infty) \times \RR \to (0, \infty)$ we require an analogous regularity condition and a singular asymptotic lower bound that is uniform with respect to the variable $\xi$. To be precise, throughout the following we always work under the following set of assumptions:
\begin{enumerate}[label=(D.\arabic*), ref=D.\arabic*]
\item \label{D1} the map $(h,\xi) \mapsto \D(h, \xi)$ is locally Lipschitz continuous in $(0, \infty) \times \RR$;
\item \label{D2} there exist a constant $c > 0$ and $\alpha \in [1,\infty)$ such that for all $h > 0$ and $\xi \in \RR$
\[
\D(h, \xi) \ge ch^{-\alpha}.
\]
\end{enumerate}
It is worth noting that the assumptions above are satisfied, for example, by the drag force exerted on a circular or spherical structure (see in particular \eqref{eq:drag2}).

We are now ready to state a no-contact result. 
\begin{prop}
\label{toygs}
Let $b$ and $\D$ be given in such a way that \eqref{B1}, \eqref{B2}, \eqref{D1}, and \eqref{D2} are satisfied. Then, for every $a$, $\mu$, $h_0$, $\dot h_0$, $\xi_0$, $\dot \xi_0 \in \RR$ with $a$, $\mu$, $h_0 > 0$ there exists a unique global solution to \eqref{gf}, denoted by $(h_{\mu},\xi_{\mu})$. In particular, $h_{\mu}(t) > 0$ for all $t > 0$.
\end{prop}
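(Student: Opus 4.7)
The plan is to prove the proposition in three stages: local existence and uniqueness, a priori bounds via energy dissipation, and a no-collision argument that keeps $h$ strictly positive. First, I would recast \eqref{gf} as the first-order autonomous system $\dot U = F(U)$ with $U = (h, \dot h, \xi, \dot \xi)$; combining the two equations gives $\ddot \xi = -(1+a)b(\xi) - \mu \D(h, \xi)\dot h$, so that $F$ is locally Lipschitz on $\{h > 0\} \times \RR^3$ by \eqref{B1} and \eqref{D1}. Picard--Lindel\"of then produces a unique solution on a maximal interval $[0, T^*)$ together with the standard blow-up alternative. Next, I would introduce the total mechanical energy
\[
\tilde E(t) \coloneqq \tfrac{1}{2}\dot h^2 + \tfrac{1}{2a}(\dot h - \dot \xi)^2 + B(\xi),
\]
and, using the identity $\ddot h - \ddot \xi = a b(\xi)$ from \eqref{gf}, compute
\[
\dot{\tilde E} = \dot h\bigl[\ddot h + b(\xi)\bigr] = -\mu \D(h, \xi)\dot h^2 \le 0.
\]
Combined with the coercivity of $B$ from \eqref{B2}, this yields uniform bounds on $|\dot h|$, $|\dot \xi|$, $|\xi|$, and hence on $|b(\xi)|$ via \eqref{B1}. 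By the blow-up alternative, global well-posedness is thus reduced to the claim that $\liminf_{t \to T^*} h(t) > 0$.

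To rule out collision, I would argue by contradiction: assume $T^* < \infty$ and $\liminf h = 0$, pick $\epsilon_n \to 0^+$, and set $\tau_n \coloneqq \inf\{t \ge 0 : h(t) = \epsilon_n\}$, so that $h(\tau_n) = \epsilon_n$ and $h(t) > \epsilon_n$ on $[0, \tau_n)$. The model case is that $h$ is monotonically decreasing on some interval $[s_n, \tau_n]$ with $h(s_n) \ge h_0/2$; then integrating $\ddot h = -b(\xi) - \mu \D(h, \xi)\dot h$ over $[s_n, \tau_n]$, using $\dot h \le 0$ together with \eqref{D2}, and changing variables to $u = h(t)$, gives
\[
\dot h(\tau_n) \ge \dot h(s_n) - \|b\|_\infty T^* + \mu c \int_{\epsilon_n}^{h(s_n)} u^{-\alpha}\, du,
\]
and since $\alpha \ge 1$ the integral diverges as $\epsilon_n \to 0$, contradicting the uniform bound on $|\dot h|$ from the energy step. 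This is the direct analogue of Hillairet's rigid-body no-collision argument in \cite{MR2354496}.

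The principal obstacle is that, because of the forcing $-b(\xi)$ from the internal spring, $h$ need not be monotone near $T^*$ and may a priori oscillate infinitely often. To handle this, I would decompose the open set $\{t \in [0, \tau_n] : \dot h(t) < 0\}$ into its connected components $\{(a_j, b_j)\}_j$. On each closed descent interval $[a_j, b_j]$ interior to $(0, \tau_n)$ one has $\dot h(a_j) = \dot h(b_j) = 0$ by maximality, and integration of the ODE gives the clean identity $\mu \int_{a_j}^{b_j} \D(h, \xi)|\dot h|\, dt = \int_{a_j}^{b_j} b(\xi)\, dt$; combining with \eqref{D2} and the change of variables $u = h(t)$ yields
\[
c \int_{h(b_j)}^{h(a_j)} u^{-\alpha}\, du \le \frac{\|b\|_\infty (b_j - a_j)}{\mu}.
\]
At the (at most two) boundary descent components touching $t = 0$ or $t = \tau_n$ the same integration yields an analogous bound, using the energy bound on $|\dot h|$ at the free endpoint in place of $0$. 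Summing all these inequalities and using $\sum_j (b_j - a_j) \le T^*$ produces an absolute bound $\sum_j \int_{h(b_j)}^{h(a_j)} u^{-\alpha}\, du \le C$. On the other hand, a ``last-hit'' argument (for each $u \in (\epsilon_n, h_0)$, the supremum $t^*(u) \coloneqq \sup\{t \le \tau_n : h(t) = u\}$ satisfies $\dot h(t^*(u)) \le 0$, hence lies in the closure of some descent component) shows that the union of the ranges $[h(b_j), h(a_j)]$ covers $(\epsilon_n, h_0)$, so
\[
\sum_j \int_{h(b_j)}^{h(a_j)} u^{-\alpha}\, du \ge \int_{\epsilon_n}^{h_0} u^{-\alpha}\, du,
\]
which diverges as $\epsilon_n \to 0$ since $\alpha \ge 1$. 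This is the desired contradiction, completing the proof.
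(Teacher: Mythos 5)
Your proposal is correct and follows essentially the same route as the paper: local well-posedness from the Lipschitz assumptions \eqref{B1} and \eqref{D1}, the same dissipated energy (your $\tilde E$ is the paper's $F$ divided by $2a$) combined with \eqref{B2} for the a priori bounds, and then the Hillairet-type integration of $\ddot h + b(\xi) \ge -\mu c\,h^{-\alpha}\dot h$ over the connected components of $\{\dot h < 0\}$ to exclude collision. The only cosmetic difference is that your ``last-hit'' covering of $(\epsilon_n, h_0)$ by the ranges $[h(b_j), h(a_j)]$ (which, as stated, needs a word about the case $\dot h(t^*(u))=0$, handled e.g.\@ by noting that the critical values of $h$ form a null set) is obtained in the paper more directly from the pointwise inequality $-\dot h\,\chi_{\{\dot h<0\}} \ge -\dot h$ followed by a single global change of variables.
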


Having established existence of global solutions, the remainder of the section is dedicated to characterizing the different qualitative behaviors of $h_{\mu}$, as we let $\mu \to 0^+$. For our next result, in addition to the assumptions of \Cref{toygs}, we require that
\begin{enumerate}[label=(B.\arabic*), ref=B.\arabic*]
\setcounter{enumi}{2}
\item \label{B3} $B \ge 0$,
\end{enumerate}
and furthermore, we restrict our attention to the case where the function $\D$ does not depend on the variable $\xi$ and obeys a power law in $h$. To be precise, we assume the following:
\begin{enumerate}[label=(D.\arabic*), ref=D.\arabic*]
\setcounter{enumi}{2}
\item \label{D3} there exist three constants $C_1, C_2 > 0$ and $\alpha \in [1, \infty)$ and a locally Lipschitz continuous function $g \colon (0, \infty) \to [C_1, C_2]$ such that for all $h > 0$ and all $\xi \in \RR$ we have
\[
\D(h,\xi) = g(h)h^{-\alpha}.
\] 
\end{enumerate}
We are now in position to state the first of our main results.

\begin{thm}
\label{mainNB}
Under the assumptions of \Cref{toygs}, set $\xi_0 = \dot \xi_0 = 0$ and let $H \colon [0, \infty) \to [0, \infty)$ be defined via 
\[
H(t) \coloneqq \max \{ 0, h_0 + \dot h_0 t\}.
\]
Then the following statements hold:
\begin{itemize}
\item[$(i)$] Assume that $b(0) = 0$ and that $\dot h_0 < 0$. Then, as $\mu \to 0^+$, we have that $h_{\mu} \to H$ and $\xi_{\mu} \to 0$ uniformly in $[0, t_0]$, where $t_0 \coloneqq - h_0/ \dot h_0$.
\item[$(ii)$] Assume that $b(0) = 0$ and that $\dot h_0 \ge 0$. Then, as $\mu \to 0^+$, we have that $h_{\mu} \to H$ and $\xi_{\mu} \to 0$ uniformly on compact subsets of $[0, \infty)$.
\item[$(iii)$] Assume that $\dot h_0 < 0$, that $b$ satisfies \eqref{B3}, and that $\D$ is given as in \eqref{D3}. Let $\xi \colon [0, \infty) \to \RR$ be defined via $\xi(t) = 0$ if $t \le t_0$, while if $t > t_0$ we let $\xi$ be the unique solution to the initial value problem
\[
\left\{
\arraycolsep=1.4pt\def\arraystretch{1.6}
\begin{array}{l}
\ddot \xi + a b(\xi) = 0, \\
\xi(t_0) = 0,\ \dot \xi(t_0) = - \dot h_0.
\end{array}
\right.
\]
Then, as $\mu \to 0^+$, we have that $h_{\mu}(t) \to H$ and $\xi_{\mu} \to \xi$ uniformly on compact subsets of $[0, \infty)$.
\end{itemize}
\end{thm}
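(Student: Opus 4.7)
The strategy combines an energy dissipation identity, Arzel\`{a}--Ascoli compactness, and passage to the limit in the ODEs on sets where the limiting shell height $\bar h$ stays positive. The main obstacle is to show, in part~(iii), that $\bar h \equiv 0$ on $[t_0,\infty)$, and this is where the structural assumption~\eqref{D3} becomes essential.

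Multiplying the second equation in \eqref{gf} by $\dot h$ and combining with $\ddot h - \ddot \xi = a b(\xi)$, I would derive the dissipation identity
\[
\frac{d}{dt}\!\left[\tfrac{1}{2}\dot h^2 + \tfrac{1}{2a}(\dot h - \dot \xi)^2 + B(\xi)\right] = -\mu\,\D(h,\xi)\,\dot h^2.
\]
Together with the fact that $B$ is bounded below (automatic from \eqref{B1}--\eqref{B2}, and immediate under \eqref{B3}), this yields uniform-in-$\mu$ bounds on $|\dot h_\mu|, |\dot \xi_\mu|$ and, via coercivity~\eqref{B2}, on $|\xi_\mu|$. Hence $(h_\mu, \xi_\mu)$ is equi-Lipschitz on compact time intervals, and by Arzel\`{a}--Ascoli there is a subsequence $\mu_k \to 0^+$ along which $(h_{\mu_k}, \xi_{\mu_k}) \to (\bar h, \bar \xi)$ uniformly on compact subsets of $[0,\infty)$. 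On any compact set where $\bar h \ge \delta > 0$, the local Lipschitz continuity from \eqref{D1} bounds $\D$ uniformly, so the drag term in the integrated ODE is of order $\mu_k$ and vanishes in the limit; passing to the limit yields $\ddot{\bar h} = -b(\bar \xi)$ and $\ddot{\bar \xi} = -(1+a) b(\bar \xi)$. Since $\bar \xi(0) = \dot{\bar \xi}(0) = 0$ and $b(0) = 0$---a direct hypothesis in parts~(i) and~(ii) and, in part~(iii), a consequence of~\eqref{B3} because $B$ is $C^1$ and has a minimum at $0$---Picard--Lindel\"{o}f forces $\bar \xi \equiv 0$ and $\bar h(t) = h_0 + \dot h_0 t$ wherever $\bar h > 0$. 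A standard maximality argument shows that $\bar h$ cannot first vanish strictly before $H$ does, so $\bar h = H$ throughout $\{H > 0\}$. This completes parts~(i) and~(ii) and settles the interval $[0, t_0]$ in part~(iii).

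For $t \ge t_0$ in part~(iii), the structural assumption~\eqref{D3} allows one to write $\mu\,\D(h_\mu)\,\dot h_\mu = -\mu\frac{d}{dt}\Phi(h_\mu)$ with $\Phi(h) \coloneqq \int_h^{h_0} g(s) s^{-\alpha}\,ds$. Integrating the second equation in~\eqref{gf} from $0$ to $t$ produces the key identity
\[
\mu\,\Phi(h_\mu(t)) = \dot h_\mu(t) - \dot h_0 + \int_0^t b(\xi_\mu(s))\,ds,
\]
whose right-hand side is uniformly bounded on compact time intervals. Since $\Phi(h) \to \infty$ as $h \to 0^+$ (because $\alpha \ge 1$), this already pins $h_\mu$ above a $\mu$-dependent threshold that collapses to zero. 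The hard step is to upgrade this to $\bar h \equiv 0$ on $[t_0,\infty)$, which I would prove by contradiction. Suppose $\bar h(t^*) > 0$ for some $t^* > t_0$ and set $\tau_1 \coloneqq \sup\{t \in [t_0, t^*] : \bar h(t) = 0\}$. On $[t_0, \tau_1]$, the arguments of the previous paragraph together with $\dot{\bar h} = 0$ a.e.~yield $\ddot{\bar \xi} = -a b(\bar \xi)$ with $\bar \xi(t_0) = 0$ and $\dot{\bar \xi}(t_0^+) = -\dot h_0$; the corresponding energy conservation, combined with $B \ge 0$ from~\eqref{B3}, gives $|\dot{\bar \xi}(\tau_1^-)| \le |\dot h_0|$. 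On the other hand, passing to the limit in the drag-free identity $\dot h_\mu - \dot \xi_\mu = \dot h_0 + a\int_0^t b(\xi_\mu)\,ds$---which converges uniformly by uniform convergence of $\xi_\mu$---shows that $\dot{\bar h} - \dot{\bar \xi}$ is continuous, and combining with the key identity on $(\tau_1, t^*]$, where $\mu\,\Phi(h_{\mu_k}) \to 0$, yields
\[
\dot{\bar h}(\tau_1^+) = \dot h_0\bigl(1 + \tfrac{1}{a}\bigr) + \tfrac{1}{a}\dot{\bar \xi}(\tau_1^-).
\]
The bound on $\dot{\bar \xi}(\tau_1^-)$ then gives $\dot{\bar h}(\tau_1^+) \le -|\dot h_0| < 0$, contradicting $\bar h \ge 0$ and $\bar h(\tau_1) = 0$. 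Therefore $\bar h \equiv 0$ on $[t_0,\infty)$, and the same derivation identifies $\bar \xi$ with the unique solution of the IVP in the statement; uniqueness of the limit upgrades subsequential convergence to convergence of the whole family $(h_\mu, \xi_\mu)$.
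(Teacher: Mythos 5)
Your compactness step (the energy identity, which is \eqref{EE} up to a factor $2a$, plus Arzel\`a--Ascoli) and your analysis on the set $\{H>0\}$ coincide with the paper's \Cref{AA} and \Cref{t<t0lem}, including the observation that \eqref{B3} forces $b(0)=0$. For the crucial claim of part~(iii) --- that the limit $\bar h$ vanishes identically on $[t_0,\infty)$ --- you take a genuinely different route from the paper's \Cref{mainNB-prop}: the paper proves the quantitative bound $h_n(t)^{\alpha-1}\le\e$ for $t\ge t_0$ by combining the integrated equation with the energy equality and the blow-up of the dissipation integral $\int_0^{t_1}h^{-\alpha}\dot h^2$ as $t_1\to t_0^-$, whereas you use \eqref{D3} to write the drag as the exact derivative $-\mu\tfrac{d}{dt}\Phi(h_\mu)$ and run a soft contradiction at the level of the limit via a departure-time analysis. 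Your route is attractive: it needs no separate treatment of $\alpha=1$ and barely uses the upper bound $g\le C_2$.

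There is, however, a genuine gap at the departure time. You set $\tau_1=\sup\{t\in[t_0,t^*]:\bar h(t)=0\}$ and then assert that on $[t_0,\tau_1]$ one has $\dot{\bar h}=0$ a.e., hence $\ddot{\bar\xi}=-ab(\bar\xi)$ and, by conservation of $\tfrac12\dot{\bar\xi}^2+aB(\bar\xi)$, the bound $|\dot{\bar\xi}(\tau_1^-)|\le|\dot h_0|$. Nothing in your definition of $\tau_1$ prevents $\bar h$ from being positive on subintervals of $(t_0,\tau_1)$; on such subintervals the limiting dynamics are $\ddot{\bar\xi}=-(1+a)b(\bar\xi)$ (a different oscillator) and $\dot{\bar h}\neq0$, so both the claimed equation for $\bar\xi$ and the resulting energy bound are unjustified as written. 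The gap is repairable within your strategy in two ways: (a) run the argument at the first departure time $\sigma=\inf\{t>t_0:\bar h(t)>0\}$, where $\bar h\equiv0$ on $[t_0,\sigma]$ by construction, and pass to the left endpoints $a_k\downarrow\sigma$ of the connected components of $\{\bar h>0\}$; or (b) bypass the oscillator entirely and bound $(\dot{\bar h}-\dot{\bar\xi})(\tau_1)$ directly by $\sqrt{1+a}\,|\dot h_0|$ using \eqref{EE} and \eqref{B3} (recall $\dot h_n-\dot\xi_n$ converges uniformly by the drag-free identity), which still gives $\dot{\bar h}(\tau_1^+)=\dot h_0-\int_0^{\tau_1}b(\bar\xi)\,ds\le\dot h_0\,\bigl((1+a)-\sqrt{1+a}\bigr)/a<0$ and hence the same contradiction with $\bar h(\tau_1)=0$ and $\bar h\ge0$. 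With either repair your argument closes, and the identification of $\bar\xi$ on $[t_0,\infty)$ then proceeds exactly as in the paper's \Cref{lim-xi}.
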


A few comments are in order. First, let us mention that we are primarily interested in the case $\dot h_0 < 0$; the case of a non-negative initial velocity is mainly stated for comparison. Next, observe that \eqref{D3} can be interpreted as a rigidity condition on the solid body (see \Cref{2&3D-LB} and \Cref{2&3D-UB}). It is also worth noting that the function $H$ given in the theorem is monotone. In particular, the conclusions of statement $(iii)$ in \Cref{mainNB} can be summarized as follows: in the vanishing viscosity limit, a ``rigid'' solid (in the sense of condition \eqref{D3}) moving towards the wall will impact the boundary of the container in finite time (to be precise, at $t = t_0$) and it won't separate from the container's wall thereafter. Thus, rather surprisingly, the reduced model predicts that, as we let the viscosity parameter go to zero, the system composed of a smooth rigid shell with an inner mass-spring mechanism (as described in \Cref{red-model-subsec}) approaches a state where the motion of the shell and that of the spring are perfectly decoupled and the shell cannot move away from the wall after collision. This trapping effect is readily explained by observing that if we could instantaneously invert the direction of the velocity, the shell would experience a drag force of equal intensity. More specifically, the resistance of the fluid to the movement of the body does not distinguish on whether the shell is approaching or receding from the wall. Notably, for positive (but small) values of the viscosity parameter, the very same phenomenon that prevents from collision is also the primary obstruction to rebound. \\

Next, we show that the nearly paradoxical situation described by \Cref{mainNB} can be partly resolved by allowing for qualitative changes in the shape of the solid. This has the effect of introducing an asymmetry in the problem which can potentially prevent the trapping phenomenon illustrated above. These changes, however, need to be significant enough to be reflected in the asymptotic behavior of $\D$ (which we recall should be understood as an approximation to the drag force exerted on the body by the surrounding fluid environment) as $h$ approaches zero. 

The running assumptions for the last result of the section are the following:
\begin{enumerate}[label=(B.\arabic*), ref=B.\arabic*]
\setcounter{enumi}{3}
\item \label{B4} $b(y)y > 0$  for all $y \neq 0$;
\end{enumerate}
\begin{enumerate}[label=(D.\arabic*), ref=D.\arabic*]
\setcounter{enumi}{3}
\item \label{D4} $\D$ is non-decreasing as a function of $\xi$, that is,  
\[
\D(h, \xi_1) \le \D(h, \xi_2)
\]
for every $h > 0$ and every $\xi_1 \le \xi_2$;
\item \label{D5} there exist three constants $\delta_1, c_1 > 0$ and $\gamma_1 \in [\alpha,\infty)$ such that for every $h > 0$ we have
\[
\D(h, -\delta_1) \ge c_1h^{-\gamma_1};
\]
\item \label{D6} there exist a constant $\delta_2 > 0$ and a function $\gamma \colon (0, \infty) \to [0, \infty)$ such that
\[
\int_0^h \gamma(y)y^{-1}\,dy \to 0
\] 
as $h \to 0^+$ and with the property that for every $h > 0$ we have
\[
\D(h, - \delta_2) \le \gamma(h)h^{-\gamma_1}.
\]
\end{enumerate}

\begin{thm}
\label{mainB}
Under the assumptions of \Cref{toygs}, let $\xi_0 = \dot \xi_0 = 0$ and assume that $\D$ satisfies \eqref{D4}, \eqref{D5}, and \eqref{D6}. Furthermore, let $b$ be given satisfying \eqref{B4} so that there exists a unique $y^- < 0$ with the property that $2aB(y^-) = \dot h_0^2$. Assume that $y^- < -\delta_2$, where $\delta_2$ is given as in \eqref{D6}. Then, for every subsequence $\{h_n\}_n \subset \{h_{\mu}\}_{\mu}$ there exists $T > t_0 \coloneqq - h_0/ \dot h_0$ such that 
\[
\lim_{n \to \infty} h_{n}(T) > 0. 
\]
\end{thm}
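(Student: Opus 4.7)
The plan is to argue by contradiction: I would combine a compactness extraction with an energy identity to compare the dynamics $(h_n, \xi_n)$ against a limiting post-collision ``pendulum'', and then exploit the reduced-drag hypothesis \eqref{D6} to show that the trapped limit $h_* \equiv 0$ is incompatible with Newton's balance.

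\emph{Step 1 (Energy identity, a priori bounds, compactness).} A direct calculation on \eqref{gf} shows that
$$
E_\mu(t) := \frac{\dot h_\mu^2}{2} + \frac{(\dot h_\mu - \dot\xi_\mu)^2}{2a} + B(\xi_\mu)
$$
satisfies $\frac{d}{dt}E_\mu = -\mu\D(h_\mu,\xi_\mu)\dot h_\mu^2$. Since $B \ge 0$ follows from \eqref{B4}, this delivers uniform-in-$\mu$ bounds on $|\dot h_\mu|$ and $|\dot\xi_\mu|$ and, via the coercivity \eqref{B2}, on $|\xi_\mu|$, together with the global dissipation bound $\int_0^\infty \mu \D \dot h_\mu^2\,dt \le E_\mu(0)$. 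Given any sequence $\mu_n \to 0$, Arzel\`a--Ascoli produces a subsequence along which $(h_n, \xi_n) \to (h_*, \xi_*)$ uniformly on compact subsets of $[0, \infty)$.

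\emph{Step 2 (Pre-impact limit and post-impact pendulum).} A bootstrap using \eqref{D1} shows that $h_n$ stays bounded below on every compact subset of $[0, t_0)$, so the drag term vanishes in the limit; combined with $\xi_0 = \dot\xi_0 = 0$ and $b(0) = 0$ (from \eqref{B4}), one obtains $\xi_* \equiv 0$ and $h_* = H$ on $[0, t_0]$. Arguing by contradiction, I would assume that along a further subsequence $h_n(T) \to 0$ for every $T > t_0$, so that $h_* \equiv 0$ on $[t_0, \infty)$. Integrating the first equation of \eqref{gf} and passing to the limit using the uniform convergence of $\xi_n$ yields $\dot h_*(t) - \dot\xi_*(t) = \dot h_0 + a\int_0^t b(\xi_*)\,ds$. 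Combined with $\dot h_* \equiv 0$ on $(t_0, \infty)$, this gives the conservative equation $\ddot\xi_* = -a b(\xi_*)$ on $(t_0, \infty)$ with $\xi_*(t_0) = 0$ and $\dot\xi_*(t_0^+) = -\dot h_0$; its conservation law $\dot\xi_*^2/2 + aB(\xi_*) = \dot h_0^2/2$, together with \eqref{B4} and the hypothesis $y^- < -\delta_2$, produces an interval $[t_1, t_2] \subset (t_0, \infty)$ and $\varepsilon_0 > 0$ such that $\xi_*(t) \le -\delta_2 - \varepsilon_0$ on $[t_1, t_2]$, and hence $\xi_n(t) \le -\delta_2$ there for all $n$ large.

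\emph{Step 3 (Contradiction via the drag impulse).} On $[t_1, t_2]$, \eqref{D4} and \eqref{D6} yield $\D(h_n, \xi_n) \le \gamma(h_n) h_n^{-\gamma_1}$, while \eqref{B4} and continuity of $b$ on the compact range of $\xi_n$ give a uniform lower bound $-b(\xi_n) \ge \beta > 0$. Integrating the second equation of \eqref{gf} over $[t_1, t_2]$ and passing to the limit (using $h_n \to 0$ strongly and a careful treatment of the boundary terms in $\dot h_n$) leads to
$$
\lim_n \mu_n \int_{t_1}^{t_2} \D(h_n,\xi_n) \dot h_n\,dt \;=\; -\int_{t_1}^{t_2} b(\xi_*)\,ds \;\ge\; \beta(t_2 - t_1) > 0.
$$
On the other hand, since $h_* \equiv 0$ on $[t_1, t_2]$ makes the limit $\xi_*$-dynamics conservative, the energy identity from Step~1 forces $\mu_n \int_{t_1}^{t_2} \D \dot h_n^2\,dt \to 0$. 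Combined with the Cauchy--Schwarz inequality
$$
\Bigl|\mu_n \int_{t_1}^{t_2} \D \dot h_n\,dt\Bigr|^2 \le \Bigl(\mu_n \int_{t_1}^{t_2} \D\,dt \Bigr)\Bigl(\mu_n \int_{t_1}^{t_2} \D \dot h_n^2\,dt\Bigr),
$$
and a boundedness claim for $\mu_n \int_{t_1}^{t_2} \D\,dt$ deduced from \eqref{D6} via a coarea-type decomposition of $[t_1, t_2]$ into monotonicity intervals of $h_n$ and the total-variation bound $\int_{t_1}^{t_2}|\dot h_n|\,dt \le M(t_2-t_1)$ from Step~1, one concludes that the left-hand side tends to zero, which contradicts the positive lower bound. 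The principal technical challenge is precisely the uniform control of $\mu_n \int_{t_1}^{t_2} \D\,dt$: the integrand $\gamma(h_n) h_n^{-\gamma_1}$ is potentially singular as $h_n \to 0$, so the precise integrability condition in \eqref{D6} must be delicately balanced against the bounded total variation of $h_n$.
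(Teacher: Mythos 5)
Your Steps 1--2 follow the paper's strategy (contradiction hypothesis $h_*\equiv 0$ after $t_0$, limiting nonlinear oscillator for $\xi_*$, an interval on which $\xi_n\le-\delta_2$), but Step 3 departs from the paper's argument and contains a genuine gap. The decisive point you are missing is that hypothesis \eqref{D5} is never used in your proof, whereas it is the engine of the paper's argument: integrating the inequality $(\ddot h_n+b(\xi_n))\chi_{\{\dot h_n\le 0\}}\ge -\mu_n c_1 h_n^{-\gamma_1}\dot h_n$ over the \emph{approach} phase (where $\xi_n\ge-\delta_1$, so \eqref{D4}--\eqref{D5} apply) yields the quantitative coupling $\mu_n\le K\, h_n(t_n)^{\gamma_1-1}$, with $h_n(t_n)$ the minimal height. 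Only with this bound does \eqref{D6} become usable on the rebound interval: $\mu_n\D(h_n,\xi_n)\le K h_n(t_n)^{\gamma_1-1}\gamma(h_n)h_n^{-\gamma_1}\le K\gamma(h_n)h_n^{-1}$, and the change of variables produces exactly the integral $\int_0^{h}\gamma(y)y^{-1}\,dy$ that \eqref{D6} makes small. Without this coupling, $\mu_n$ and the minimal height are unrelated, and the drag impulse on $[t_1,t_2]$ can be arbitrarily large.

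Concretely, your proposed control of $\mu_n\int_{t_1}^{t_2}\D\,dt$ fails on two counts. First, a decomposition into monotonicity intervals together with the total-variation bound controls $\int\D\,|\dot h_n|\,dt$, not $\int\D\,dt$; the latter is not dominated by the former when $\dot h_n$ is small while $h_n$ is tiny (e.g.\@ if $h_n$ lingers near its minimum), and there is no a priori bound on the number of monotonicity intervals either. Second, even the change of variables in $\int\D|\dot h_n|\,dt$ produces $\int\gamma(y)y^{-\gamma_1}\,dy$, which is strictly more singular than the quantity $\int\gamma(y)y^{-1}\,dy$ that \eqref{D6} controls (recall $\gamma_1\ge 1$), so \eqref{D6} alone cannot close this estimate. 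In addition, two limit passages in your Step 3 are unjustified: the identification $\lim_n\mu_n\int_{t_1}^{t_2}\D\dot h_n\,dt=-\int_{t_1}^{t_2}b(\xi_*)\,ds$ requires the boundary terms $\dot h_n(t_i)$ to converge, and the claim $\mu_n\int_{t_1}^{t_2}\D\dot h_n^2\,dt\to 0$ requires $F(t_1)-F(t_2)\to 0$; neither follows from uniform convergence of $(h_n,\xi_n)$, since $\dot h_n$ and $\dot\xi_n$ need not converge pointwise near the contact region (only the combination $\dot h_n-\dot\xi_n$ has uniformly bounded derivative). The paper sidesteps all of this by choosing mean-value points $\tau_{\e,n}$ at which $\dot h_n(\tau_{\e,n})\to 0$ and integrating the momentum balance \emph{twice} in time, converting the velocity gain from the spring into a height gain that contradicts $h_n\to 0$. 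To repair your argument you should import the paper's Step 3 (the bound $\mu_n\le K h_n(t_n)^{\gamma_1-1}$ via \eqref{D5}) and replace the Cauchy--Schwarz step by the double time integration.
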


We remark that although \Cref{mainB} holds for every choice of the initial velocity $\dot h_0$, the result is of particular interest in the case where $\dot h_0 < 0$. Indeed, since in this case we have that $h_n(t_0) \to 0$ (see statement $(i)$ in \Cref{mainNB}), the theorem implies that $\{h_n\}_n$ converges to a function which is not monotone.

\begin{rmk}
Notice that \eqref{B4} implies \eqref{B3}. Therefore, the main difference between \Cref{mainNB} and \Cref{mainB} is that condition \eqref{D3} is replaced by \eqref{D4}--\eqref{D6}. We mention here that our prototypical examples for the drag shape factor $\D$ are motivated by the drag force estimates obtained in \Cref{drag-section} $($see in particular \eqref{eq:drag2}$)$ and are given by
\begin{equation}\label{drag_exponent_xi}
d_1(h, \xi) \coloneqq h^{- c \xi - 3/2}, \qquad d_2(h, \xi) \coloneqq h^{-\max\{ \xi, 0\} - 1},
\end{equation}
where $c$ is a positive constant. Notice that $d_1$ and $d_2$ allow for adequate choices of $\delta_2$ and $y^-$. In particular, they satisfy \eqref{D1}, \eqref{D5}, and \eqref{D6}. Notice that the monotonicity requirement in \eqref{D4} holds for all $h \le 1$; thus both examples can be suitably modified to satisfy \eqref{D4}. Additionally, as it becomes apparent from the proof of \Cref{mainB} $($see in particular \eqref{min-est-1}$)$, it is enough to assume that \eqref{D4} holds for $h \le h_0 + \e$, where $\e$ can be any positive number. Finally, notice that \eqref{D2} is automatically satisfies for $d_2$ and holds for $d_1$ provided that $c$ is chosen opportunely.
\end{rmk}

\subsection{Proofs of the main results}
\label{proof-sec}
In this section we collect the proofs of the results stated above. The proofs are inspired by the experimental observations. For \Cref{mainNB} we use the effect that the object is only stopping on a height, where (due to the symmetry of the problem) no escape is possible; we refer to this effect as the ``trapping'' phenomenon. Conversely, in the proof of \Cref{mainB} we exploit the fact that, as the object is approaching the wall, the change of shape effectively stops the particle at a greater distance that the one that would be reached by the undeformed configuration. As the deformation parameter reverts these changes, the symmetry is broken and the elastic response is sufficient to generate a vertical motion away from the wall. 

\begin{proof}[Proof of \Cref{toygs}]
In view of the regularity assumptions (\ref{B1}) and (\ref{D1}), the existence of local solutions to (\ref{gf}) follows directly from Peano's theorem. Let $(h, \xi)$ be a maximal solution defined on the interval $(0,T)$ and assume by contradiction that $T < \infty$. We divide the proof into two steps. 
\newline
\textbf{Step 1:} Multiplying the first equation in $\eqref{gf}$ by $(\dot h - \dot \xi)$, the second one by $a \dot h$, and adding together the resulting expressions, we arrive at
\begin{equation}
\label{energy1}
( \ddot h - \ddot \xi )(\dot h - \dot \xi) + a \ddot h \dot h = - a b(\xi) \dot \xi  - a \mu \D(h, \xi) \dot h^2.
\end{equation}
Define the auxiliary function
\[
F(t) \coloneqq (\dot h(t) - \dot \xi(t))^2 + a \dot h(t)^2 + 2a B(\xi(t))
\]
and notice that integrating (\ref{energy1}) yields
\begin{equation}
\label{EE}
F(t) + 2 a \mu \int_0^t \D(h(s), \xi(s)) \dot h(s)^2 \,ds = F(0).
\end{equation}
Since the integral on the left-hand side is non-negative, in view of (\ref{B2}) we conclude that $\xi, \dot h$, and $\dot \xi$ are bounded. Consequently, since by assumption $T < \infty$, we obtain that $h$ is also bounded in $[0,T]$. This implies that necessarily $h(T) = 0$, since otherwise the solution would admit an extension, hence contradicting the maximality of the solution $(h,\xi)$.
\newline
\textbf{Step 2:} Next, we take $T_1$ to be the smallest time instance for which $h(T_1) = 0$. In view of the previous step we have that $T_1 \le T < \infty$. We notice that by multiplying the second equation in $\eqref{gf}$ by $\chi_{\{\dot h < 0\}}$ we get 
\begin{align*}
\left(\ddot h + b(\xi) \right)\chi_{\{\dot h < 0\}} & = - \mu \D(h,\xi)\dot h \chi_{\{\dot h < 0\}} \\
& \ge - \mu c h^{-\alpha}\dot h \chi_{\{\dot h < 0\}} \\
& \ge - \mu c h^{-\alpha}\dot h \chi_{\{\dot h < 0\}} - \mu c h^{-\alpha}\dot h \chi_{\{\dot h > 0\}} \\
& = - \mu c h^{-\alpha}\dot h,
\end{align*}
where in the first inequality we have used the lower bound given by \eqref{D2}. Integrating both sides in the previous inequality yields 
\begin{equation}
\label{ncpf}
\int_0^t \left(\ddot h(s) + b(\xi(s)) \right)\chi_{\{\dot h < 0\}}(s)\,ds \ge  - \mu c \int_0^t h^{-\alpha}(s)\dot h(s)\,ds = - \mu c \int_{h_0}^{h(t)} y^{-\alpha}\,dy.
\end{equation}
Notice that since by assumption $\alpha \ge 1$, the right-hand side of \eqref{ncpf} tends to infinity as $t \to T_1^-$. Set $U \coloneqq \{s \in (0,t) : \dot h(s) < 0\}$ and observe that if $U = (0,t)$ then  
\begin{equation}
\label{ncpf1}
\int_0^t \ddot h(s) \chi_{\{\dot h < 0\}}(s)\,ds = \dot h(t) - \dot h_0,
\end{equation}
while if this is not the case then we write $U$ as the union of at most countably many disjoint open intervals, i.e.\@
\[
U = \bigcup_{i = 1}^{\infty}(s_i, t_i).
\] 
Without loss of generality we assume that $s_i \le s_j$ if $i \le j$; furthermore, we notice that $\dot h(t_1) = 0$, $\dot h(s_i) = 0$ for all $i \ge 2$, and $\dot h(t_i) = 0$ for all $i \ge 2$ provided that $t_i \neq t$. Consequently, we have 
\begin{equation}
\label{ncpf2}
\int_0^t \ddot h(s) \chi_{\{\dot h < 0\}}(s)\,ds = \sum_{i = 1}^{\infty} \int_{s_i}^{t_i}\ddot h(s)\,ds  = \min\{\dot h(t), 0\} - \min\{\dot h_0, 0\}.
\end{equation}
Combining \eqref{ncpf1} and \eqref{ncpf2} with the bounds obtained in the previous step shows that the left-hand side of \eqref{ncpf} remains bounded as $t \to T_1^-$, thus yielding a contradiction. 

In turn, we obtain that $h > 0$ in $[0, T]$ and the existence of a global solution follows by Step 1. Moreover, the uniqueness of solutions is now direct consequence of the Picard--Lindel\"of theorem and \eqref{D1}.
\end{proof}

In the remainder of this section, we study the asymptotic behavior of solutions as the viscosity parameter $\mu$ approaches zero. To be precise, in the following we fix a sequence $\mu_n \to 0^+$ and denote with $(h_n, \xi_n)$ the solution to \eqref{gf} (given by \Cref{toygs}) relative to the choice $\mu = \mu_n$. 

\begin{lem}
\label{AA}
Under the assumptions of \Cref{toygs}, let $(h_n, \xi_n)$ be solutions as above. Then there exist two Lipschitz continuous functions $h, \xi \colon \RR \to \RR$, with $h$ non-negative, such that (up to the extraction of a subsequence, which we do not relabel) $h_n \to h$ and $\xi_n \to \xi$ uniformly on compact subsets of $[0, \infty)$. 
\end{lem}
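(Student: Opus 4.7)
The plan is to obtain uniform (in $n$) Lipschitz bounds on both $h_n$ and $\xi_n$ directly from the energy identity \eqref{EE} and then invoke Arzel\`a--Ascoli with a diagonal extraction. Since the initial data $(h_0,\dot h_0,\xi_0,\dot\xi_0)$ do not depend on $n$, the quantity $F(0)$ appearing in \eqref{EE} is a fixed constant, and since the viscous dissipation term on the left of \eqref{EE} is non-negative, we get $F(t)\le F(0)$ for every $t\ge 0$ and every $n$. Unpacking the definition of $F$ yields uniform bounds on $\dot h_n$ (from the $a\dot h^2$ term), on $\dot h_n-\dot\xi_n$ (and hence on $\dot\xi_n$), and on $B(\xi_n)$; using coercivity \eqref{B2}, the latter gives a uniform $L^\infty$ bound on $\xi_n$ itself.

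With uniform $L^\infty$ bounds on the time derivatives in hand, both $h_n$ and $\xi_n$ are uniformly Lipschitz on $[0,\infty)$. Combined with the fixed initial values $h_n(0)=h_0$ and $\xi_n(0)=\xi_0$, this gives uniform boundedness on every compact interval. Arzel\`a--Ascoli applied on $[0,k]$ for each integer $k\ge 1$, followed by a standard diagonal extraction, produces a single subsequence (not relabelled) along which $h_n\to h$ and $\xi_n\to\xi$ uniformly on every compact subset of $[0,\infty)$. The limits $h$ and $\xi$ inherit the common Lipschitz constant of the approximating sequences, since uniform limits of uniformly $L$-Lipschitz functions are $L$-Lipschitz. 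Finally, $h\ge 0$ follows from the positivity $h_n>0$ guaranteed by \Cref{toygs} by passing to the pointwise limit.

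There is essentially no serious obstacle: the only point that requires a bit of care is extracting the uniform bound on $\xi_n$, which must go through the coercivity assumption \eqref{B2} rather than directly from $F$ (since $F$ controls $\dot\xi_n$ via $\dot h_n-\dot\xi_n$ but controls $\xi_n$ only through $B(\xi_n)$). Once the uniform Lipschitz estimates are in place, the rest is a routine application of compactness, and one should emphasize that the bounds are independent of $\mu_n$ precisely because the dissipative term in \eqref{EE} has the right sign.
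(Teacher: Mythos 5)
Your proof is correct and follows essentially the same route as the paper: the energy identity \eqref{EE} with its sign-definite dissipation term yields uniform bounds on $\dot h_n$, $\dot\xi_n$, and (via the coercivity assumption \eqref{B2}) on $\xi_n$, after which equi-Lipschitz continuity plus the fixed initial data and Arzel\`a--Ascoli give the locally uniform convergence. The paper states this more tersely but the argument is identical, including the point you highlight that $\xi_n$ must be controlled through $B(\xi_n)$ and \eqref{B2} rather than directly from $F$.
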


\begin{proof}
As a consequence of the energy estimate \eqref{EE}, we see that 
\begin{equation}
\label{unif-bounds}	
\sup \left\{\|\xi_n\|_{L^{\infty}([0,\infty))} + \|\dot h_n\|_{L^{\infty}([0,\infty))} + \|\dot \xi_n\|_{L^{\infty}([0,\infty))} : n \in \NN \right\} < \infty.
\end{equation}

Since the sequence $\{h_n\}_n$ is equi-Lipschitz continuous and $h_n(0) = h_0$ for every $n$, it is also equi-bounded in $[0,T]$ for every $T > 0$. The desired result then follows by the Arzel\`a--Ascoli theorem. 
\end{proof}

\begin{lem}
\label{t<t0lem}
Assume that $b(0) = 0$, $\xi_0 = 0$, $\dot \xi_0 = 0$, and let $(h, \xi)$ be given as in \Cref{AA}. Then, the following hold:
\begin{itemize}
\item[$(i)$] if $\dot h_0 \ge 0$ we have that $h(t) = h_0 + \dot h_0 t$ and $\xi(t) = 0$ for every $t \ge 0$;
\item[$(ii)$] if $\dot h_0 < 0$ we have that $h(t) = h_0 + \dot h_0 t$ and $\xi(t) = 0$ for every $t \le t_0 \coloneqq - h_0/ \dot h_0$.
\end{itemize}
\end{lem}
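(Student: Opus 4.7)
The plan is to use a bootstrap argument: passing to the limit $\mu_n \to 0^+$ in \eqref{gf} requires uniform control of $\D(h_n,\xi_n)$, and by \eqref{D1} this reduces to bounding $h_n$ uniformly away from zero on the interval under consideration, which in turn must hold on any interval where the limit $h$ stays strictly positive. The central object is therefore
\[
T^* \coloneqq \sup\{T > 0 : h(t) > 0 \text{ for all } t \in [0, T]\},
\]
which is strictly positive by the continuity of $h$ and $h(0) = h_0 > 0$. I first aim to show that $h(t) = h_0 + \dot h_0 t$ and $\xi(t) = 0$ on $[0, T^*)$, and then determine $T^*$ from the sign of $\dot h_0$.

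Fix any $T \in (0, T^*)$. By continuity, $m_T \coloneqq \min_{[0,T]} h > 0$, and the uniform convergence $h_n \to h$ yields $h_n \ge m_T/2$ on $[0,T]$ for all $n$ sufficiently large. Combined with the uniform $L^\infty$-bound on $\xi_n$ coming from the energy identity \eqref{EE} and the continuity of $\D$, this produces a constant $M_T$ with $\D(h_n(s),\xi_n(s)) \le M_T$ for every $s \in [0,T]$ and every $n$ large. Subtracting the two equations in \eqref{gf}, using $\xi_n(0) = \dot\xi_n(0) = 0$, and integrating twice yields
\[
\xi_n(t) = -(1+a) \int_0^t (t-s) b(\xi_n(s))\,ds - \mu_n \int_0^t (t-s) \D(h_n(s),\xi_n(s)) \dot h_n(s)\,ds.
\]
The drag integral is dominated in absolute value by $\mu_n T^2 M_T \sup_n \|\dot h_n\|_{L^\infty}$, which tends to zero since $\sup_n \|\dot h_n\|_{L^\infty} < \infty$ by \Cref{AA}; the first integral converges by the uniform convergence of $\xi_n$ and continuity of $b$. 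Hence the limit $\xi$ satisfies $\ddot \xi + (1+a)b(\xi) = 0$ on $[0,T]$ with zero initial data, and by Picard--Lindel\"of (using \eqref{B1} and $b(0) = 0$) we obtain $\xi \equiv 0$ on $[0,T]$. Running the same passage to the limit on the second equation of \eqref{gf} then gives $h(t) = h_0 + \dot h_0 t$ on $[0,T]$.

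As $T < T^*$ was arbitrary, the formulas for $h$ and $\xi$ hold throughout $[0, T^*)$. In case $(i)$, $h_0 + \dot h_0 t > 0$ for every $t \ge 0$, so if $T^*$ were finite then by continuity $h$ would remain positive in a neighborhood of $T^*$, contradicting its definition; hence $T^* = \infty$. In case $(ii)$, $h_0 + \dot h_0 t > 0$ precisely on $[0, t_0)$, so the same extension argument forces $T^* = t_0$, and the conclusion at the endpoint $t_0$ follows by continuity of $h$ and $\xi$. The principal obstacle is the circular dependence between \emph{``$\D$ is bounded''} and \emph{``$h$ stays positive''}; the bootstrap through $T^*$ is precisely what breaks this circularity.
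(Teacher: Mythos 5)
Your proof is correct and follows essentially the same strategy as the paper's: pass to the limit in \eqref{gf} on intervals where $h$ stays bounded away from zero (so that $\D(h_n,\xi_n)$ is uniformly controlled and the drag term vanishes with $\mu_n$), identify $\xi$ as the zero solution of $\ddot \xi + (1+a)b(\xi) = 0$ with zero data via uniqueness, and then reach $t_0$ (resp.\@ all of $[0,\infty)$) by a continuity/contradiction argument. The only difference is cosmetic: by integrating the equations twice you avoid the paper's Arzel\`a--Ascoli step for $\dot h_n$ and $\dot \xi_n$.
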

\begin{proof}
Since by assumption $h(0) = h_0 > 0$, there exists $t_1 > 0$ such that $h(t) > 0$ in $[0, t_1)$. For any $t_2 < t_1$, let $\e \coloneqq \min\{h(t) : t \in [0, t_2]\}$. Then, for $t \in (0, t_2)$ we have 
\[
|\ddot h_n(t)| \le \|b(\xi_n)\|_{L^\infty} + \mu_n \|\dot h_n\|_{L^\infty}\max \left\{\D(y, \xi_n) : y \in [\e, \|h_n\|_{L^\infty}]\right\}.
\]
Thus, $\{h_n\}_n$ is bounded in $C^{1,1}((0, t_2))$ and by the Arzel\`a--Ascoli theorem we find for that for a subsequence $\dot{h}_n\to \dot{h}$ uniformly. 

Next, notice that by integrating the second equation in \eqref{gf} we arrive at
\[
\dot h_n(t) - \dot h_0 = - \int_0^t b(\xi_n(s))\,ds - \mu_n \int_0^t \D(h_n(s), \xi_n(s))\dot h_n(s)\,ds.
\]
Letting $n \to \infty$ in the previous identity yields
\begin{equation}
\label{lim-eq}
\dot h(t) - \dot h_0 = - \int_0^t b(\xi(s))\,ds.
\end{equation}
Subtracting the second equation in \eqref{gf} to the first one we obtain 
\begin{equation}
\label{xin-equation}
\ddot \xi_n = - (1 + a)b(\xi_n) - \mu_n\D(h_n, \xi_n)\dot h_n.
\end{equation}
Therefore, reasoning as above, we conclude that $\xi \in C^1((0, t_2))$ and that eventually extracting a subsequence we also have $\dot \xi_n \to \dot \xi$. Integrating the equation in \eqref{xin-equation} and passing to the limit with respect to $n$ we see that 
\[
\dot \xi(t) = - (1 + a) \int_0^t b(\xi(s))\,ds.
\]
In turn, $\xi$ is of class $C^2$ in $(0, t_2)$ and solves the initial value problem 
\[
\left\{
\arraycolsep=1.4pt\def\arraystretch{1.6}
\begin{array}{l}
\ddot \xi + (1 + a)b(\xi) = 0, \\
\xi(0) = \dot \xi(0) = 0.
\end{array}
\right.
\]
Since by assumption $b(0) = 0$, we readily deduce that $\xi$ is identically equal to zero in $[0, t_2]$. This, together with \eqref{lim-eq}, implies that $\dot h(t) = \dot h_0$ and therefore that $h(t) = h_0 + \dot h_0 t$ for every $t \in [0, t_2]$. 

Finally, assuming first that $\dot h_0 < 0$, we notice that if we can choose $t_1 \ge t_0$ then there is nothing else to do. If this is not the case, then we can assume without loss of generality that $t_1 < t_0$ is such that $h(t_1) = 0$. In this case, letting $t_2 \to t_1^-$ would then imply that $h(t_1) = h_0 + \dot h_0 t_1 > 0$, thus leading to a contradiction. On the other hand, if $\dot h_0 \ge 0$ the proof is similar, but simpler; thus we omit the details. This completes the proof.
\end{proof}

In the following proposition we address the more delicate case in which $\dot h_0 < 0$ and $t \ge t_0$. 

\begin{prop}
\label{mainNB-prop}
Under the assumptions of \Cref{mainNB}, let $h_n$, $\xi_n$, $h$, and $\xi$ be given as in \Cref{AA}. Then, if $\dot h_0 < 0$ we have that $h(t) = 0$ in $[t_0, \infty)$. 
\end{prop}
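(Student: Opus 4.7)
The plan is to argue by contradiction: assume there exists $T > t_0$ with $h(T) > 0$, and set $\tau := \sup\{s \in [t_0,T] : h(s) = 0\}$. By continuity of the limit, $h(\tau) = 0$ and $h(s) > 0$ on $(\tau,T]$; uniform convergence $h_n \to h$ then forces $h_n(\tau) \to 0$ while $h_n(T) \to h(T) > 0$, so that in particular $h_n(\tau) < h_n(T)$ for $n$ large.

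The central estimate reuses the multiplier strategy from Step 2 of the proof of \Cref{toygs}, now with $\chi_{\{\dot h_n > 0\}}$ and restricted to the ascending phase $[\tau, T]$. Multiplying the second equation in \eqref{gf} by $\chi_{\{\dot h_n > 0\}}$, integrating, and using \eqref{D2} to bound $\mu_n \D \dot h_n \chi_+ \ge c\mu_n h_n^{-\alpha} (\dot h_n)_+$, one gets
\[
c\,\mu_n \int_{\tau}^T h_n^{-\alpha}(\dot h_n)_+\,ds \;\le\; (\dot h_n(\tau))_+ - (\dot h_n(T))_+ + \int_\tau^T |b(\xi_n)|\,ds \;\le\; C,
\]
where $C$ is uniform in $n$ by the energy estimate \eqref{EE} and the Lipschitz regularity \eqref{B1}. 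Simultaneously, the one-dimensional coarea formula combined with the observation that $h_n$ must upcross every level $y \in (h_n(\tau),\, h_n(T))$ at least once gives
\[
\int_\tau^T h_n^{-\alpha}(\dot h_n)_+\,ds \;\ge\; \int_{h_n(\tau)}^{h_n(T)} y^{-\alpha}\,dy \;\longrightarrow\; +\infty,
\]
the divergence following from $\alpha \ge 1$ and $h_n(\tau) \to 0$ (logarithmically when $\alpha = 1$, as a power when $\alpha > 1$).

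The delicate remaining point, and the main obstacle, is that ``$\mu_n \cdot \infty \le C$'' is \emph{a priori} indeterminate: one must quantitatively match the rate at which the integral blows up with the rate at which $\mu_n$ vanishes in order to extract a genuine contradiction. This is precisely where the ``trapping'' and ``symmetry'' heuristics enter: $\D(h,\xi)$ depends only on $h$ and $\xi$, not on the sign of $\dot h$, so the identical multiplier argument applied with $\chi_{\{\dot h_n<0\}}$ on the descent interval $[0,t_0]$ yields the twin bound $\mu_n \int_{h_n(t_0)}^{h_0} y^{-\alpha}\,dy \le C$. Coupling this with the identity
\[
\mu_n \int_0^t \D(h_n,\xi_n)\,\dot h_n\,ds = \dot h_0 - \dot h_n(t) - \int_0^t b(\xi_n)\,ds
\]
(obtained by integrating the second equation of \eqref{gf}) and using $\xi_n(t_0)\to 0$ together with $b(0)=0$ to identify $\lim \dot h_n(t_0)$, the two rate constraints coming from descent and ascent can be shown to be incompatible in the vanishing-viscosity limit, producing the desired contradiction and concluding that $h(t)=0$ on $[t_0,\infty)$. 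The technical heart of the argument is this final bookkeeping: one must essentially show that the initial kinetic budget $F_n(0) = (1+a)\dot h_0^2$ is fully expended paying the singular drag toll along the descent, leaving no excess available to fund an ascent past any positive height against the very same toll.
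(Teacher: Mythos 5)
Your setup is sound as far as it goes: the contradiction hypothesis, the choice of $\tau$, and the two coarea/level-crossing bounds on descent and ascent are all correct. But each of those bounds only yields $\mu_n h_n(\tau)^{1-\alpha}\le C$, i.e.\ a \emph{lower} bound $h_n(\tau)^{\alpha-1}\gtrsim\mu_n$, which is perfectly consistent; you acknowledge this and defer the ``final bookkeeping''. That bookkeeping is not a technicality --- it is the entire proof, and the ingredients you list cannot close it. Every estimate in your sketch uses only the one-sided lower bound \eqref{D2}, the energy identity \eqref{EE}, the exact integrated momentum identity, and \Cref{t<t0lem}. All of these hypotheses and facts also hold in the setting of \Cref{mainB} (whose assumptions contain those of \Cref{toygs}, and \eqref{B4} implies $b(0)=0$), where the conclusion of the proposition is \emph{false}: rebound occurs. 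So no combination of your listed constraints can produce the contradiction; the proof must invoke the two-sided, $\xi$-independent power law \eqref{D3}, and your sketch never identifies where or how.

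Concretely, the paper's mechanism is the following, and it differs from yours in two ways you would need to replicate. First, instead of integrating the second equation alone (which leaves the uncontrolled term $\int_0^t b(\xi_n)\,ds$ --- merely ``bounded'' is not enough for the bookkeeping, and its limit for $t>t_0$ is exactly what you do not know under the contradiction hypothesis), one adds the first equation to $a$ times the second to get $(1+a)\ddot h_n-\ddot\xi_n=-a\mu_n g(h_n)h_n^{-\alpha}\dot h_n$, eliminating $b(\xi_n)$ entirely. The right-hand side is an exact differential (here \eqref{D3} is essential), so its integral over $[0,t]$ telescopes to $\approx\mu_n\,h_n(t)^{1-\alpha}$ up to the constants $C_1,C_2$, regardless of the sign history of $\dot h_n$; if $h_n(t)^{\alpha-1}>\e$ this impulse is $O(\mu_n\e^{-1})$. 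Second, squaring the resulting velocity relation and comparing with the energy equality converts this into an upper bound of order $\mu_n\e^{-1}$ on the dissipation $\mu_n\int_0^t\D\dot h_n^2\,ds$, which is then contradicted by the explicitly computable free-fall dissipation on $[0,t_1]$ with $t_1<t_0$ chosen so close to $t_0$ that $\int_0^{t_1}h^{-\alpha}\dot h^2\,ds$ exceeds the required multiple of $\e^{-1}$ (both sides scale like $\mu_n$, so the rates match exactly). Relatedly, your guiding heuristic that ``the initial kinetic budget is fully expended along the descent'' is not accurate: for any fixed $t_1<t_0$ the dissipation on $[0,t_1]$ tends to zero with $\mu_n$, and in the limit only the fraction $a/(1+a)$ of the kinetic energy is dissipated (in a boundary layer at $t_0$), the remainder being transferred to the spring. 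The correct statement is about the sign-insensitive, telescoping drag \emph{impulse} versus the coercive drag \emph{dissipation}, and their interplay through \eqref{D3} and \eqref{EE}.
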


\begin{proof}
Assume first that $\alpha > 1$ and fix $\e > 0$. We claim that there exists $N(\e) \in \NN$ such that if $n \ge N(\e)$ then 
\begin{equation}
\label{spring0}
h_{n}(t)^{\alpha - 1} \le \e
\end{equation} 
for every $t \ge t_0$. The desired result then follows by the arbitrariness of $\e$. We begin by observing that adding the first equation in \eqref{gf} to a multiple of the second equation yields
\[
(1 + a) \ddot h_n - \ddot \xi_n = - a \mu_n g(h_n)h_n^{-\alpha}\dot h_n.
\]
Let $t \ge t_0$ be such that $h_n(t) < h_0$. Then, integrating the previous identity and by means of a change of variables we obtain 
\begin{align*}
(1 + a)\dot h_n(t) - (1 + a)\dot h_0 - \dot \xi_n(t) & = - a \mu_n \int_0^t g(h_n(s))h_n(s)^{- \alpha}\dot h_n(s)\,ds \\
& = - a \mu_n \int_{h_0}^{h_n(t)}g(y)y^{- \alpha}\,dy \\
& \le - \frac{C_2 a \mu_n}{1 - \alpha}(h_n(t)^{1 - \alpha} - h_0^{1 - \alpha}).
\end{align*}
Rearranging the terms in the previous inequality yields
\[
h_n(t)^{\alpha - 1} \le \frac{C_2 a \mu_n}{\alpha -1 }\left((1 + a)\dot h_n(t) - (1 + a) \dot h_0 - \dot \xi_n(t) + \frac{C_2 a \mu_n}{\alpha - 1} h_0^{1 - \alpha}\right)^{-1}.
\]
Therefore, to prove \eqref{spring0} it is enough to show that
\[
\frac{C_2 a \mu_n}{\alpha - 1} \le \e \left((1 + a)\dot h_n(t) - (1 + a) \dot h_0 - \dot \xi_n(t) + \frac{C_2 a \mu_n}{\alpha - 1} h_0^{1 - \alpha}\right);
\]
in the following it will be convenient to rewrite this condition as
\begin{equation}
\label{wtsNB}
(1 + a)\dot h_0 + \frac{C_2 a \mu_n}{\alpha - 1} \left(\e^{-1} - h_0^{1 - \alpha}\right) \le (1 + a) \dot h_n(t) - \dot \xi_n(t).
\end{equation}
Let us remark here that since by assumption $\dot h_0 < 0$, it is possible to choose $n$ large enough so that the left-hand side in \eqref{wtsNB} is negative. Consequently, if arguing by contradiction we assume that \eqref{wtsNB} does not hold, we obtain that
\begin{equation}
\label{notwtsNB}
\left|(1 + a)\dot h_0 + \frac{C_2 a \mu_n}{\alpha - 1} \left(\e^{-1} - h_0^{1 - \alpha}\right)\right| < |(1 + a) \dot h_n(t) - \dot \xi_n(t)|.
\end{equation}
Squaring both sides in \eqref{notwtsNB} and by Young's inequality we see that 
\begin{align}
\left[(1 + a)\dot h_0 + \frac{C_2 a \mu_n}{\alpha - 1} \left(\e^{-1} - h_0^{1 - \alpha}\right)\right]^2 & < \left( (1 + a) \dot h_n(t) - \dot \xi_n(t)\right)^2 \notag \\
& \le \left(1 + \frac{1}{\delta}\right)(\dot h_n(t) - \dot \xi_n(t))^2 + (1 + \delta)a^2\dot h_n(t)^2 \label{wtsNB1.5} 
\end{align}
holds for every $\delta > 0$. In particular, if we let $\delta = 1/a$, the right-hand side in \eqref{wtsNB1.5} can be rewritten as 
\[
\left(1 + \frac{1}{\delta}\right)(\dot h_n(t) - \dot \xi_n(t))^2 + (1 + \delta)a^2\dot h_n(t)^2 = (1 + a)\left[(\dot h_n(t) - \dot \xi_n(t))^2 + a \dot h_n(t)^2\right],
\]
and therefore, from the energy equality \eqref{EE}, we see that
\begin{multline}
\label{wtsNB2}
(1 + a)\left[(\dot h_n(t) - \dot \xi_n(t))^2 + a \dot h_n(t)^2\right] = (1 + a)^2 \dot h_0^2 - 2 a (1 + a) B(\xi_n(t)) \\ - 2 a (1 + a)\mu_n \int_0^t g(h_n(s))h_n(s)^{-\alpha}\dot h_n(s)^2\,ds.
\end{multline}
Further, expanding the square on the left-hand side of \eqref{wtsNB1.5} we obtain the quantity
\begin{equation}
\label{wtsNB2.5}
(1 + a)^2 \dot h_0^2 + \frac{2 C_2 a (1 + a) \mu_n}{\alpha - 1} \left(\e^{-1} - h_0^{1 - \alpha}\right)\dot h_0 + \mathcal{O}(\mu_n^2).
\end{equation}
Thus, combining \eqref{wtsNB2} and \eqref{wtsNB2.5} with \eqref{wtsNB1.5} for $\delta = 1/a$ and rearranging the terms in the result inequality, we arrive at 
\[
B(\xi_n(t)) + \mu_n \int_0^t g(h_n(s))h_n(s)^{-\alpha}\dot h_n(s)^2\,ds \le \frac{C_2 \mu_n}{\alpha - 1}\left(\e^{-1} - h_0^{1 - \alpha}\right)(-\dot h_0) + \mathcal{O}(\mu_n^2).
\]
Let $t_1 < t_0$ be such that
\begin{equation}
\label{t1contr}
h_0^{1 - \alpha} + \frac{C_2}{C_1}\left(\e^{-1} - h_0^{1 - \alpha}\right) < (h_0 + \dot h_0 t_1)^{1 - \alpha}.
\end{equation}
Then, since by assumption $t \ge t_0$, we have
\begin{align*}
C_1\int_0^{t_1}h_n(s)^{-\alpha}\dot h_n(s)^2\,ds & \le \frac{B(\xi_n(t))}{\mu_n} + \int_0^t g(h_n(s))h_n(s)^{-\alpha}\dot h_n(s)^2\,ds \\
& \le \frac{C_2(-\dot h_0)}{\alpha - 1}\left(\e^{-1} - h_0^{1 - \alpha}\right) + \mathcal{O}(\mu_n).
\end{align*}
We claim that letting $n \to \infty$ in the previous inequality leads to a contradiction to the definition of $t_1$. Indeed, since in $[0,t_1]$ we have that $h_n$ and $\dot h_n$ converge uniformly to $h$ and $\dot h$, respectively. Moreover, since \eqref{B3} implies that $b(0) = 0$, we are in a position to apply \Cref{t<t0lem} and conclude that
\begin{align}
\frac{C_1(- \dot h_0)}{\alpha - 1}\left[(h_0 + \dot h_0 t_1)^{1 - \alpha} - h_0^{1 - \alpha}\right] & = C_1\int_0^{t_1}h(s)^{-\alpha}\dot h(s)^2\,ds \notag \\
& \le \frac{C_2(-\dot h_0)}{\alpha - 1}\left(\e^{-1} - h_0^{1 - \alpha}\right). \label{t1contr2}
\end{align}
As one can readily check, \eqref{t1contr2} is in contradiction with \eqref{t1contr} and the claim is proved. Thus, we have shown that if $t \ge t_0$ and $h_n(t) < h_0$ for every $n$ sufficiently large then $h_n(t)^{\alpha - 1} \le \e$. Assume for the sake of contradiction that there exists $t > t_0$ such that $h_n(t) \to h(t) \ge h_0$. Since $h(t_0) = 0$, there must be a point $\tau \in (t_0, t)$ such that $h(\tau) = h_0/2$. Let $N \in \NN$ be such that 
\[
\left|h_n(\tau) - \frac{h_0}{2}\right| < \frac{h_0}{4}
\]
for all $n \ge N$. Notice that for every such $n$ we have that $h_n(\tau) < h_0$ and therefore $h_n(\tau) \le \e$, provided $n$ is large enough. This implies that $0 < h_0/2 = h(\tau) \le \e$. Letting $\e \to 0$ leads to a contradiction. 

If $\alpha = 1$, the argument presented above can be suitably modified to prove that
\[
\frac{1}{\left| \log h_n(t) - \log h_0\right|} \le \e
\]
rather than \eqref{spring0}. Since the proof requires only minimal changes, we omit the details.
\end{proof}

\begin{lem}
\label{lim-xi}
Under the assumptions of \Cref{toygs}, let $h_n$, $\xi_n$, $h$, and $\xi$ be given as in \Cref{AA}. Then, if $\dot h_0 < 0$, $b(0) = 0$, and $h(t) = 0$ for $t \ge t_0$ we have that $\xi$ is the unique solution to the initial value problem 
\[
\left\{
\arraycolsep=1.4pt\def\arraystretch{1.6}
\begin{array}{l}
\ddot \xi + a b(\xi) = 0, \\
\xi(t_0) = 0,\ \dot \xi(t_0) = - \dot h_0.
\end{array}
\right.
\]
\end{lem}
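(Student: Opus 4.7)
The plan is to exploit the fact that the \emph{first} equation of \eqref{gf} is free of the drag term, so that a single application of Arzelà--Ascoli to the difference $\dot h_n - \dot \xi_n$ will deliver both the limit ODE and its initial conditions without ever having to estimate the singular factor $\mu_n \D(h_n,\xi_n)\dot h_n$.

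First, by the energy identity \eqref{EE} the sequences $\dot h_n$, $\dot\xi_n$, $\xi_n$ are uniformly bounded in $L^\infty$; combined with \eqref{B1}, this gives a uniform $L^\infty$ bound on $\ddot h_n - \ddot \xi_n = a b(\xi_n)$. Setting $w_n \coloneqq \dot h_n - \dot \xi_n$, the family $\{w_n\}$ is thus equi-bounded and equi-Lipschitz, so (up to a further subsequence) it converges uniformly on compact subsets of $[0,\infty)$ to a continuous function $w$. From $\int_0^t w_n(s)\,ds = h_n(t) - \xi_n(t) - h_0$ and Lemma~\ref{AA}, passing to the limit shows that $h - \xi \in C^1([0,\infty))$ with derivative $w$.

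Next, integrating $\ddot h_n - \ddot \xi_n = a b(\xi_n)$ from $0$ to $t$ gives
\[
w_n(t) = \dot h_0 + a \int_0^t b(\xi_n(s))\,ds,
\]
and the uniform convergence $\xi_n \to \xi$ together with the continuity of $b$ yields, in the limit,
\[
w(t) = \dot h_0 + a \int_0^t b(\xi(s))\,ds, \qquad t \ge 0.
\]
By hypothesis $h \equiv 0$ on $[t_0,\infty)$, so $\dot h \equiv 0$ classically there, and consequently $\dot \xi(t) = -w(t)$ for all $t > t_0$. Lemma~\ref{t<t0lem}(ii) gives $\xi \equiv 0$ on $[0,t_0]$, and since $b(0) = 0$ the integral $\int_0^{t_0} b(\xi)\,ds$ vanishes; this forces $\dot \xi(t_0^+) = -w(t_0) = -\dot h_0$, while continuity of $\xi$ yields $\xi(t_0) = 0$. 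Differentiating the identity for $w$ on $(t_0,\infty)$ shows that $\xi \in C^2((t_0,\infty))$ and satisfies $\ddot \xi + a b(\xi) = 0$ classically.

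Uniqueness of the solution of the initial value problem follows from the Picard--Lindel\"of theorem together with \eqref{B1}; this also upgrades subsequential convergence to convergence of the full sequence. The only mild subtlety I foresee is the identification of the right-derivative $\dot\xi(t_0^+)$, since $\xi$ has an a priori corner at $t_0$ (its derivative jumps from $0$ to $-\dot h_0$); however, the hypothesis $b(0)=0$ paired with $\xi \equiv 0$ on $[0,t_0]$ removes that obstacle by making the limiting integral at $t_0$ trivially zero.
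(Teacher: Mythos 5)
Your proposal is correct and follows essentially the same route as the paper's proof: both exploit that the first equation of \eqref{gf} is drag-free, apply Arzel\`a--Ascoli to $\dot h_n - \dot\xi_n$ (the paper phrases this as $C^{1,1}$ compactness of $z_n = h_n - \xi_n$), and pass to the limit in the integrated equation. The identification of the initial data via $h\equiv 0$ on $[t_0,\infty)$, Lemma~\ref{t<t0lem}, and $b(0)=0$ is likewise the same as in the paper.
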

\begin{proof}
We begin by noticing that if we let $z_n \coloneqq h_n - \xi_n$, we have that $\ddot z_n = a b(\xi_n)$. Therefore, the sequence $\{z_n\}_n$ is bounded in $C^{1,1}$. In turn, by Arzel\'{a}-Ascoli, we see that there exists a function $z$ such that, up to the extraction of a subsequence (which we do not relabel), $z_n \to z$ and $\dot z_n \to \dot z$ uniformly on compact subsets of $[0, \infty)$. Integrating the equation for $z_n$ and passing to the limit in $n$ yields
\[
\dot z(t) - \dot z(0) = \int_0^t a b(\xi(s))\,ds.
\]
Therefore $z \in C^2(0,\infty)$ and satisfies $\ddot z = a b(\xi)$. Since by assumption we have that $h(t) = 0$ for every $t \ge t_0$, in view of \Cref{t<t0lem} we are left to show that $\dot \xi(t_0) = -\dot h_0$. This follows by observing that 
\[
\dot h_0 = \lim_{t \nearrow t_0} \dot h(t) - \dot \xi(t) = \dot z(t_0) = \lim_{t \searrow t_0} \dot z(t) = \lim_{t \searrow t_0} - \dot \xi(t).
\]
This concludes the proof.
\end{proof}

\begin{proof}[Proof of \Cref{mainNB}] Combining the results of \Cref{t<t0lem}, \Cref{mainNB-prop} and \Cref{lim-xi}, we obtain that for every sequence $\mu_n \to 0^+$, the corresponding sequences of solutions, i.e.\@ $\{h_n\}_n$ and $\{\xi_n\}_n$, admit a subsequence with the desired convergence properties. As one can readily check with a standard argument by contradiction, this implies that the convergence holds for the entire family. Hence, the proof is complete.
\end{proof}

We conclude the section with the proof of \Cref{mainB}.

\begin{proof}[Proof of \Cref{mainB}]
The proof is for the case that $\gamma_1 > 1$ (in (\eqref{D5})). The case that $1 = \alpha = \gamma_1$ follows by the same arguments (replacing powers by logarithms at the relevant places).

We divide the proof into several steps.
\newline
\textbf{Step 1:} 
Arguing by contradiction, assume that $h_n(t) \to \max\{h_0 + \dot h_0 t , 0\}$ for all $t \ge 0$. Then, an application of \Cref{lim-xi} yields that eventually extracting a subsequence we have that $\xi_n \to \xi$, where $\xi$ is the solution to
\begin{equation}
\label{non-lin-osc}
\ddot \xi + a b(\xi) = 0,
\end{equation}
with initial conditions $\xi(t_0) = 0$ and $\dot \xi(t_0) = - \dot h_0$. Furthermore, from \eqref{B2} and \eqref{B4} we see that there are exactly two points $y^-, y^+$, with $y^- <0< y^+$, such that $2a B(y^{\pm}) = \dot h_0^2$. Let 
\[
t^{\pm} \coloneqq 2 \left|\int_0^{y^{\pm}} \left(\dot h_0^2 - 2aB(y)\right)^{-1/2}\,dy\right|.
\]
Observe that $t^\pm$ are finite by the positivity assumption of (B.4), since (by Taylor expansion) $\dot h_0^2 - 2aB(y)=2ab(y^\pm)(y^\pm-y)+\mathcal{O}((y^\pm-y)^2)$.
Further notice that the points $y^{\pm}$ are turning points for the non-linear oscillator \eqref{non-lin-osc}, whose period is given by $t^+ + t^-$. We then define $t_1 \coloneqq t_0 + t^+$ and $t_2 \coloneqq t_0 + t^+ + t^-$. With this notation at hand, we have that 
\[
\arraycolsep=1.4pt\def\arraystretch{1.6}
\begin{array}{rll}
y^- \le \xi(t) \le & y^+\qquad  & \text{ for } t \ge 0, \\
\xi(t) > & 0 & \text{ for } t \in (t_0, t_1), \\
\xi(t) < & 0 & \text{ for } t \in (t_1, t_2).
\end{array}
\]
\newline
\textbf{Step 2}: In this step we prove that for every $\e > 0$ with $6\e < t_2 - t_1$ there exists $N(\e)$ such that if $n \ge N(\e)$ then $\dot h_n(t) \ge 0$ in $(t_1 + 3\e, t_2 -3 \e)$. To this end, observe that by the uniform convergence of $\xi_n$ to $\xi$, there exists a positive $\delta$ such that 
\begin{equation}
\label{xi-negative}
\xi_n(t) \le - \delta	
\end{equation}
for all $t \in (t_1 + \e, t_2 - \e)$ and all $n$ sufficiently large. Arguing by contradiction, suppose that for a subsequence of $\{h_n\}_n$ (which we do not relabel) we can find  points $\tau_n \in (t_1 + 3\e, t_2 - 3\e)$ with the property that $\dot h_n(\tau_n) < 0$. Observe that necessarily $\dot h_n(t) \le 0$ in $(t_1 + \e, \tau_n)$. Indeed, if this was not the case then $h_n$ would admit a local maximum at a point $\sigma_n$ in this interval. This leads to a contradiction since \eqref{xi-negative}, together with \eqref{B4}, implies that $\ddot h_n(\sigma_n) = - b(\xi_n(\sigma_n)) > 0$.
Let $t_{\e, n} \in (t_1 + \e, t_1 + 2 \e)$ be such that
\[
h_n(t_1 + \e) - h_n(t_1 + 2\e) = \dot h_n(t_{\e, n})\e.
\]
Letting $n \to \infty$ we see that $\dot h_n(t_{\e, n}) \to 0$. Integrating the second equation in \eqref{gf} between $t_{\e, n}$ and $t \in (t_{\e, n}, t_1 + 3\e)$, using the fact that $h_n$ is non-increasing in this interval, and \eqref{xi-negative} we arrive at
\[
\dot h_n(t) - \dot h_n(t_{\e, n}) \ge - \int_{t_{\e, n}}^t b(\xi_n(s))\,ds \ge \beta(t - t_{\e, n}),  
\]
where $\beta \coloneqq \min\{-b(y) : y \in [y^-, - \delta]\}$. Integrating the previous inequality between $t_1 + 2\e$ and $t_1 + 3\e$ we then conclude that
\[
h_n(t_1 + 3\e) - h_n(t_1 + 2\e) - \dot h_n(t_{\e, n})\e \ge \frac{\beta \e^2}{2},
\]
which in turn implies
\[
h_n(t_1 + 3\e) - h_n(t_1 + \e)\geq \frac{\beta \e^2}{2}.
\]
Letting $n \to \infty$ leads to a contradiction, since the left hand side was assumed to converge to zero.
\newline  
\textbf{Step 3:} Let $t_n$ be such that
\[
h_n(t_n) = \min\left\{h_n(t) : t \in \left[0, \frac{t_1 + t_2}{2}\right]\right\}.
\]
The purpose of this step is to prove that there exists a constant $K > 0$ such that for every $n$ sufficiently large we have
\begin{equation}
\label{mu-lowerbound}
K h_n(t_n)^{\gamma_1 - 1} \ge \mu_n.
\end{equation}
To see this, fix $\e > 0$ such that $\xi(t) > - \delta_1/2$ in $[0, t_1 + 3\e]$, where $\delta_1$ is given as in \eqref{D5}. Using the fact that $\xi_n \to \xi$ uniformly in $[0, t_2]$ and the result of the previous step it is possible to find a number $N(\e)$ such that if $n \ge N(\e)$ then the following properties are satisfied:
\begin{equation}
\label{tn-min}
\arraycolsep=1.4pt\def\arraystretch{1.6}
\begin{array}{rll}
\xi_n(t) \ge & -\delta_1 & \text{ in } [0, t_1 + 3\e], \\
\dot h_n(t) \ge & 0 & \text{ in } [t_1 + 3\e, t_2 - 3\e].
\end{array}
\end{equation}
Notice that by \eqref{tn-min} it follows that $t_n \in [0, t_1 + 3\e]$. Moreover, in view of \eqref{D4}, \eqref{D5}, and \eqref{tn-min}, for every $t \in [0, t_1 + 3\e]$ we have
\begin{align}
\label{min-est-1}
\left( \ddot h_n(t) + b(\xi_n(t)) \right)\chi_{\{\dot h_n \le 0\}}(t) & = - \mu_n \D(h_n(t), \xi_n(t)) \dot h_n(t) \chi_{\{\dot h_n \le 0\}}(t) \notag \\
& \ge - \mu_n \D(h_n(t), - \delta_1) \dot h_n(t) \chi_{\{\dot h_n \le 0\}}(t) \notag \\
& \ge - \mu_n c_1 h_n(t)^{- \gamma_1}\dot h_n(t) \chi_{\{\dot h_n \le 0\}}(t) \notag \\
& \ge - \mu_n c_1 h_n(t)^{- \gamma_1}\dot h_n(t).
\end{align}
Reasoning as in \eqref{ncpf2} (see also \eqref{unif-bounds}), we conclude that there exists a constant $k > 0$ such that
\begin{equation}
\label{k-mu}	
k \ge \int_0^{t_n} \left( \ddot h_n(t) + b(\xi_n(t)) \right)\chi_{\{\dot h_n \le 0\}}(t)\,dt \ge \frac{\mu_n c_1}{\gamma_1 - 1} \left(h_n(t_n)^{1 - \gamma_1} - h_0^{1 - \gamma_1}\right),
\end{equation}
where the second inequality is obtained by integrating the estimate in \eqref{min-est-1}. Notice that \eqref{k-mu} can be rewritten as
\begin{equation*}
\frac{(\gamma_1 - 1)k}{c_1} h_n(t_n)^{\gamma_1 - 1} \ge \mu_n \left(1 - h_0^{1 - \gamma_1}h_n(t_n)^{\gamma_1 - 1}\right),
\end{equation*}
and that the right-hand side can be further estimated from below by $\mu_n/2$, provided $n$ is large enough. In particular, we have show that \eqref{mu-lowerbound} holds for $K = 2(\gamma_1 - 1)k/c_1$.
\newline
\textbf{Step 4:} With this estimate at hand can proceed as follows. Since by assumption $y^- < - \delta_2$, where $\delta_2$ is the constant given as in \eqref{D6}, eventually replacing $\e$ with a smaller number, we can find $T_1, T_2$ such that $T_1 < T_2$, $4\e < T_2 - T_1$, $(T_1, T_2) \subset (t_1 + 3\e, t_2 - 3\e)$, and with the property that $\xi_n(t) \le - \delta_2$ for every $t \in (T_1, T_2)$. Reasoning as in Step 2 of the proof, for every $n$ we can find a point $\tau_{\e, n} \in (T_1, T_1 + \e)$ in such a way that 
\[
h_n(T_1) - h_n(T_1 + \e) = \dot h_n(\tau_{\e, n})\e.
\]
Notice that for every $t \in (T_1, T_2)$, \eqref{mu-lowerbound} and \eqref{D5} imply that
\begin{align*}
\ddot h_n(t) & = - b(\xi_n(t)) - \mu_n \D(h_n(t), \xi_n(t))\dot h_n(t) \\
& \ge - b(\xi_n(t)) - K h_n(t_n)^{\gamma_1 - 1}\gamma(h_n(t))h_n(t)^{- \gamma_1}\dot h_n(t) \\
& \ge - b(\xi_n(t)) - K \gamma(h_n(t))h_n(t)^{-1}\dot h_n(t).
\end{align*}
Integrating the previous inequality between $\tau_{\e, n}$ and $t \in (\tau_{\e, n}, T_2)$ we obtain 
\begin{align}
\label{bounce-dot}
\dot h_n(t) - \dot h_n(\tau_{\e, n}) & \ge - \int_{\tau_{\e, n}}^t b(\xi_n(s))\,ds - K \int_{\tau_{\e, n}}^t \gamma(h_n(s))h_n(s)^{-1}\dot h_n(s)\,ds \notag \\
& = - \int_{\tau_{\e, n}}^t b(\xi_n(s))\,ds - K \int_{h_n(\tau_{\e, n})}^{h_n(t)}\gamma(y)y^{-1}\,dy \notag \\
& \ge - \int_{\tau_{\e,n}}^t b(\xi_n(s))\,ds - K \int_0^{h_n(T_2)}\gamma(y)y^{-1}\,dy, 
\end{align}
where in the last inequality we have used the fact that $h_n$ is non-decreasing in $(T_1, T_2)$. Integrating \eqref{bounce-dot} from $\tau_{\e, n}$ to $T_2$ yields
\begin{align*}
h_n(T_2) - h_n(\tau_{\e, n}) - \dot h_n(\tau_{\e, n})(T_2 - \tau_{\e, n}) &\ge - \int_{\tau_{\e, n}}^{T_2} \int_{\tau_{\e,n}}^t b(\xi_n(s))\,dsdt \\
&\quad - K(T_2 - \tau_{\e,n}) \int_0^{h_n(T_2)}\gamma(y)y^{-1}\,dy.
\end{align*}
In view of \eqref{D6}, by letting $n \to \infty$ in the previous inequality we obtain
\begin{equation}
\label{rhs-contr->}
0 = \lim_{n \to \infty} h_n(T_2) - h_n(\tau_{\e, n}) - \dot h_n(\tau_{\e, n})(T_2 - \tau_{\e, n}) \ge \lim_{n \to \infty} - \int_{\tau_{\e, n}}^{T_2} \int_{\tau_{\e,n}}^t b(\xi_n(s))\,dsdt.
\end{equation}
To conclude, it is enough to notice that the right-hand side of \eqref{rhs-contr->} is positive. Indeed, if we set $\tilde{\beta} \coloneqq \min \{ -b(y) : y \in [y^-, - \delta_2] \} > 0$, we get
\[
- \int_{\tau_{\e, n}}^{T_2} \int_{\tau_{\e,n}}^t b(\xi_n(s))\,dsdt \ge \frac{1}{2}(T_2 - T_1 - \e)^2 \tilde{\beta} > 0.
\]
We have thus arrived at a contradiction and the proof is complete.
\end{proof}

\section{Numerical results}
\label{sec:num}
In this section, we present some numerical experiments in order to further strengthen our main conjecture. We begin by illustrating that the ``effectively deformable'' reduced model, for which the internal spring deformation is coupled with the damping term that represents the drag force, does indeed produce a physical rebound. We conclude the section with the comparison from a numerical standpoint of the ODE and PDE solutions. The striking similarities that we observe suggest the relevance of the reduced model for the description of the rebound phenomenon.

\subsection{Reduced model}\label{sec:ODE_num}
In the numerical simulations we shall consider a particular variant of the reduced model \eqref{gf}. To be precise, we take 
\[
\D(h,\xi) \coloneqq \frac{c_1 h^{-c_2\xi - 3/2} +  c_3}{M},
\]
where the first term on the right-hand side is in accordance with \eqref{drag_exponent_xi} 
and reflects the change of flatness parameterized by $\xi$, and the second constant term describes the standard Stokes drag in the absence of geometrical constraints. Furthermore, let $a$ and $b$ be given as in \eqref{choices-red}, then the system of governing equations for $h$ and $\xi$ can be written as
\begin{align*}
M \ddot h &= - k\xi - \mu\left(c_1 h^{-c_2\xi - 3/2} +  c_3\right)\dot{h},\\
m( \ddot h - \ddot \xi) & = k\xi,
\end{align*}
with initial conditions  
\begin{align*}
h(0) &= h_0,\hspace{1cm}\dot{h}(0)= \dot{h}_0,\\
\xi(0) &= \xi_0,\hspace{1cm}\dot{\xi}(0)= \dot{\xi}_0.
\end{align*}

\subsubsection{Numerical results}\label{sec:ODE_results}
In order to demonstrate the critical effect of the change of flatness for the reduced model, we compare the two situations in which $c_2 = 0$ and $c_2 \neq 0$. In both cases an internal energy storage mechanism is present in the form of a mass-spring element. In the first case the elongation of the spring does not affect the drag force (rigid shell model). For this model we prove that a physical rebound is not possible, see \Cref{toygs} and \Cref{mainNB}. In the other case, the elongation of the spring does affect the drag force (effectively deformable model); in this setting a physical rebound can be expected in view of \Cref{mainB}.

The qualitative different behaviors that the two settings can exhibit are summarized in \Cref{fig:giovanni}, where we plot the evolution of $h$, that is, the distance to the wall, as a function of time $t$ for several values of the fluid viscosity $\mu$ for the rigid shell model (left column) and for the effectively deformable model (right column). The top row shows a larger time interval $(0,2)$\,s, while on the bottom row we zoom into the vicinity of the supposed rebound instant. The figure clearly demonstrates the critical effect of the inclusion of a coupling between the internal deformation parameter $\xi$ and the drag force on the dynamics of the system. For the rigid shell model, the response converges with decreasing viscosity to the ``hit-and-stick'' solution, i.e., to the piecewise affine function $H(t) = \max \{0, h_0 + \dot{h}_0 t\}$ (see \Cref{mainNB}). Note that as a result of the presence of the internal spring, the solutions for the rigid shell model are non-monotone, but as the amplitude of these oscillations diminishes with decreasing fluid viscosity, this bouncing does not correspond to the physical rebound as we defined it (that is, it doesn't withstand the vanishing viscosity limit).

\begin{figure}[!htbp]
\begin{center}
\includegraphics[width=7.3cm]{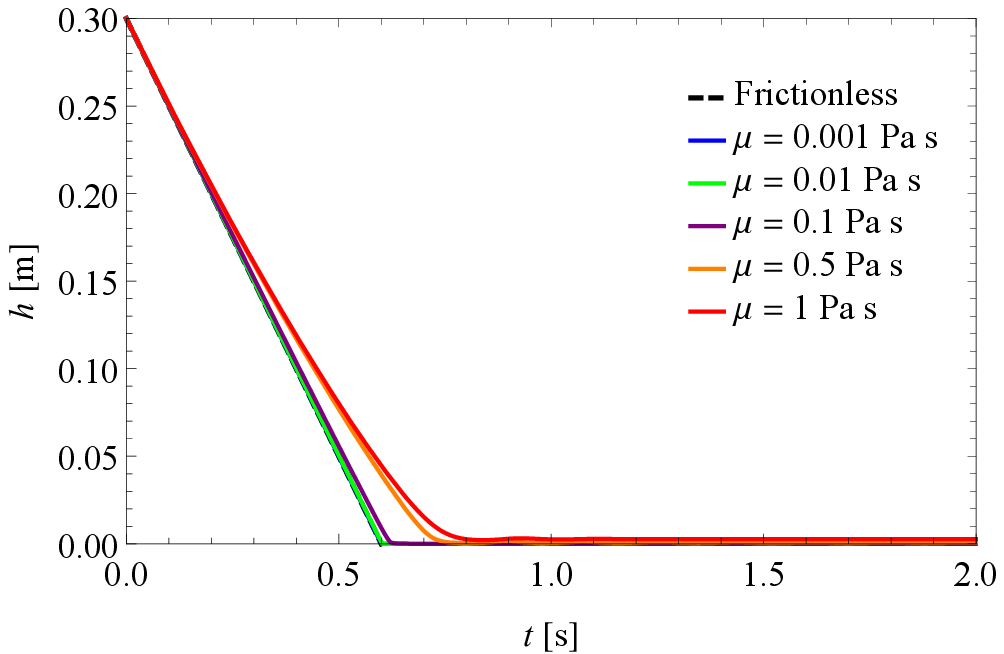}\hfill
\includegraphics[width=7.3cm]{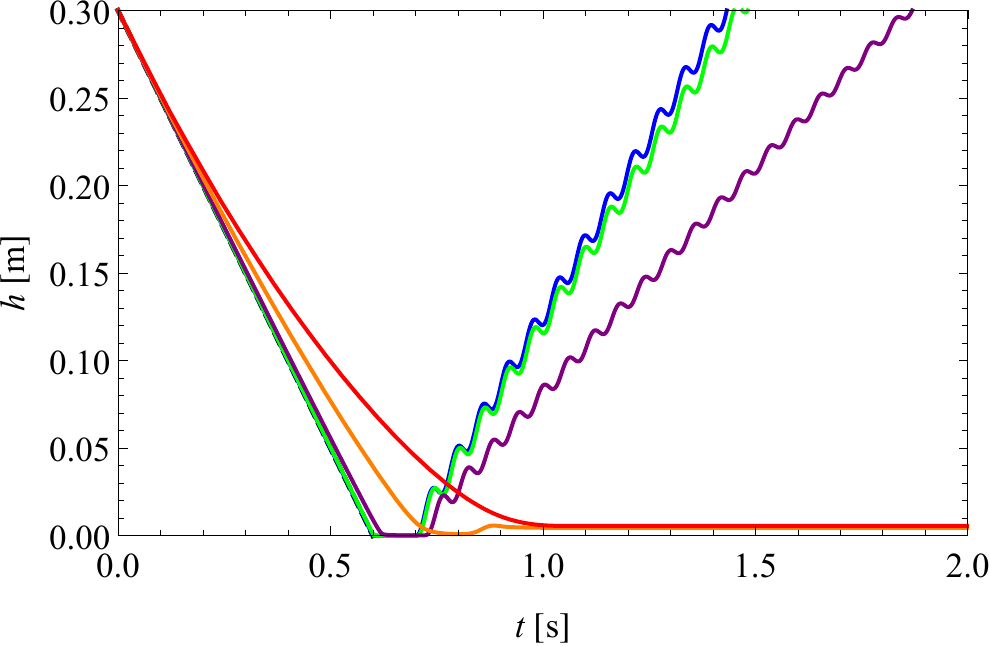}\\[10pt]
\includegraphics[width=7.15cm]{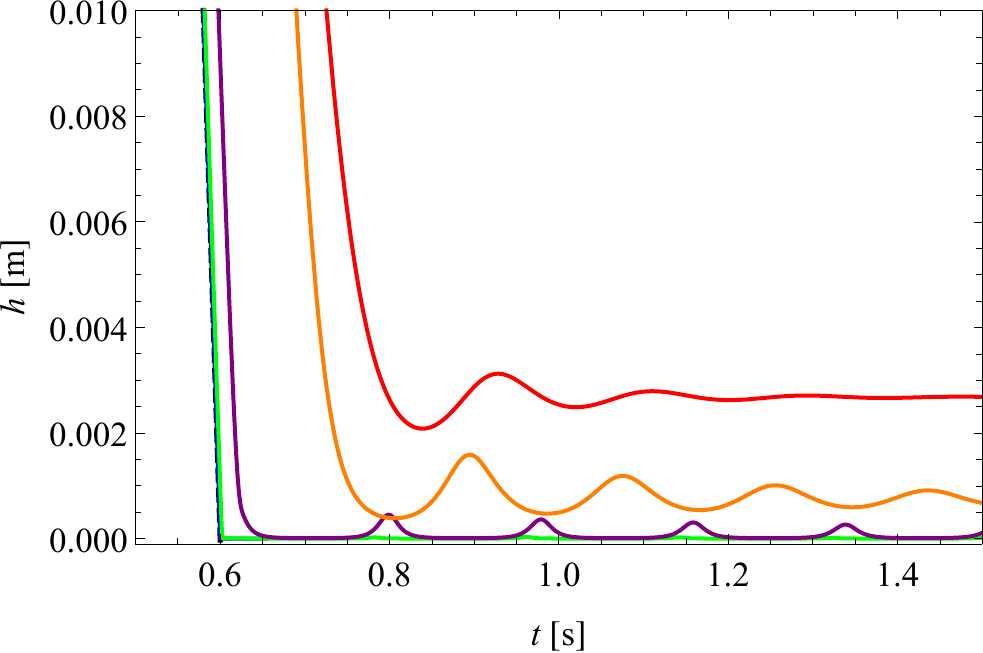}\hfill
\includegraphics[width=7.15cm]{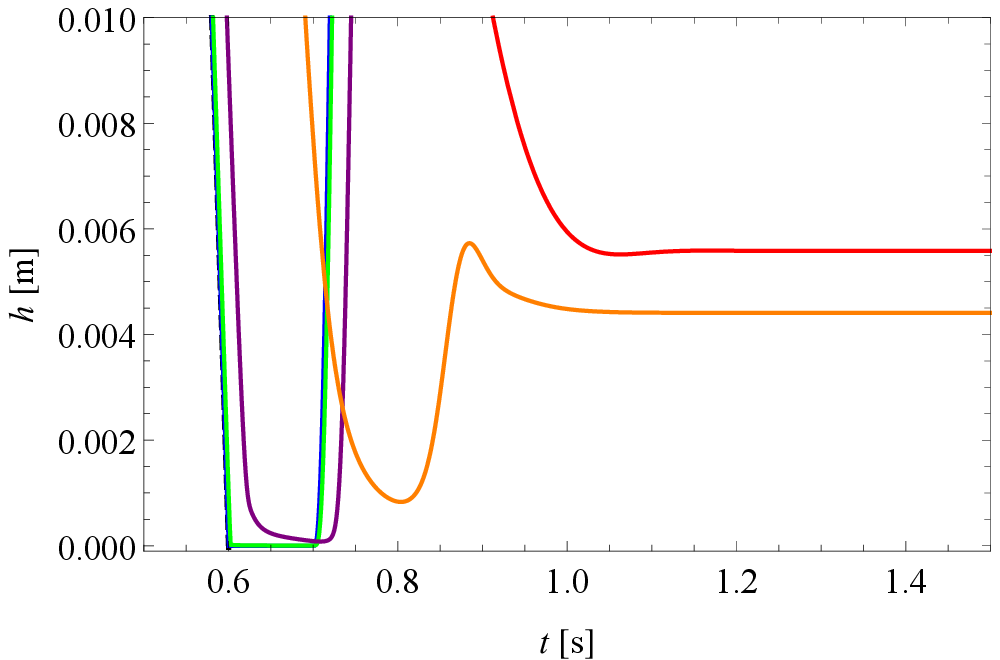}\hspace*{2mm}\ \\
\caption{Physical rebound is not possible for the rigid shell model (left). The effectively deformable model can produce a rebound (right). The graphs in the bottom row are close-ups in the vicinity of the supposed rebound instant.} \label{fig:giovanni}
\end{center}
\end{figure}

The situation is very different for the effectively deformable model. For the highest values of viscosity (red line), the body bounces off very mildly and its motion is rather quickly slowed down due to friction in the fluid. But with decreasing viscosity, the rebound is getting more and more pronounced and the solutions appear to be converging to an expected frictionless limit. Note how oscillations of the internal spring manifest themselves in the motion of $h$, becoming less and less damped as $\mu$ goes to zero. Interestingly, our simulations indicate that the kinetic energy corresponding to the outer shell, i.e.\@ the fraction $M/(M + m)$ of the total kinetic energy of the system, is lost during the rebound in the vanishing viscosity limit. This suggests that a proper physical rebound (i.e.\@ a perfectly elastic vacuum situation), would correspond in our reduced model to the case $M \to 0^+$, that is, to the situation in which the entire mass of the body is carried by the internal mass and the outer shell is massless.

The values of the parameters used in the depicted simulations are as follows: 
$k = 10000$, $c_1 = 0.1$, $c_2=20$ for the effectively deformable model (and it is set equal to zero in the rigid case), $c_3=7.4$, $M = 1$, and finally $m = 8.2$. \footnote{Please observe that these parameters are in accordance with the assumptions of \Cref{mainB}. Indeed, since the energy estimate~\eqref{EE} implies that $\sup_t \abs{\xi(t)} \le \frac{\dot{h}_0}{\sqrt{k}} = \frac{1}{200}$, we have that $\xi(t) < \frac{1}{2c_2} = \frac{1}{40}$ for all $t > 0$.}
This particular choice of the parameters is motivated by our effort to match the solutions to the reduced model with the finite element solutions to the full FSI problem described in \Cref{sec:ODE-FEM_comparison}. For the initial conditions we considered the following values: $h_0 = 0.3$, $\dot h_0 = 0.5$, and $\xi_0 = \dot \xi_0 = 0$.

\subsection{Full FSI model}
The standard form of the fluid-structure interaction problem, as given in \Cref{section:full-FSI}, consists of two sets of equations --- one for the fluid and one for the solid --- which are formulated in different configurations. While the fluid component is described in the physical Eulerian configuration, the equations for the solid are formulated in the reference (Lagrangian) configuration. Usually, the fluid-structure interaction is treated by the so-called arbitrary Lagrangian-Eulerian (ALE) method (see for example \cite{Donea2004,Scovazzi2007}) where the solid part is Lagrangian, but the fluid problem is transformed into a certain special configuration which reflects the changes of the shape of the fluid domain but is not disrupted by the (possibly vigorous) motion of the fluid within the domain. The ALE method can be used to tackle the problem of contact in fluid-structure interaction, but often requires the use of sophisticated adaptive remeshing techniques to keep the fluid domain in the contact region well resolved. For our specific problem, we find it more convenient to solve the whole problem in the Eulerian setting, where the interaction conditions \eqref{interaction_conditions} are satisfied automatically. In order to do so, the problem for the solid must be first transformed accordingly. Thus, we rewrite the Eulerian form of the momentum balance for the solid as
\[
\rho_s \left(\pder{\bm{v}}{t} + \bm{v} \cdot \nabla \bm{v} \right) = \di \sigma_s,
\]
where the Cauchy stress $\sigma_s$ is given by \eqref{stress-NeoHooke}, i.e.,
\[
\sigma_s = -p\I_N + G \left(\Bb - \frac{1}{N}(\tr \Bb)\I_N \right).
\]
The evolution equation for the Cauchy--Green tensor $\Bb$ can be derived directly from the kinematics. Indeed, the material time derivative of the deformation gradient, denoted by $\dot{\F}$, can be computed as follows:
\begin{equation}
\label{fdot-1}
\dot{\F} = \frac{\partial\F({\bm X},t)}{\partial t}=\frac{\rm d}{{\rm d}t}\F(\bm{x}(\bm{X},t),t)= \pder{\F({\bm x},t)}{t} + (\bm{v} \cdot \nabla)\F({\bm x},t),
\end{equation}
where in the second equality, we switch from the Lagrangian to the Eulerian description. On the other hand, by directly employing the definition of $\F$, we get
\begin{align}
\dot{\F} &=\frac{\partial \F({\bm X},t)}{\partial t} = \frac{\partial}{\partial t}(\nabla_{\bm X}\eta({\bm X},t)) = \nabla_{\bm X}\frac{\partial\eta({\bm X},t)}{\partial t} =  \nabla_{\bm X}\bm{v}({\bm X},t)=\nabla_{\bm X}(\bm{v}({\bm x}({\bm X},t),t) \notag \\
& = \pder{\bm{v}({\bm x},t)}{{\bm x}}\pder{\bm {x}({\bm X},t)}{{\bm X}}=\pder{\bm{v}({\bm x},t)}{{\bm x}}\F({\bm X},t)=(\nabla \bm{v}({\bm x},t))\F(\bm{x}, t), \label{fdot-2}
\end{align}    
where $\nabla_{\bm X}$ denotes the gradient with respect to the reference position ${\bm X}$ (note that in the fifth and last equality, we again switch from the Lagrangian to the Eulerian description). Combining \eqref{fdot-1} and \eqref{fdot-2} gives a classical kinematic relation between the deformation gradient and the velocity in the Eulerian setting. Furthermore, we obtain
\[
\dot{\Bb}({\bm x},t)=\dot{\F}\F^{\rm T}+\F\dot{\F}^{\rm T}=(\nabla\bm{v})({\bm x}, t)\Bb({\bm x},t)+\Bb({\bm x},t)(\nabla\bm{v}({\bm x},t))^{\rm T},
\]
which enables to close the system of equations. To summarize, the governing equations for the incompressible neo-Hookean solid in the Eulerian setting read
\begin{align*}
\di \bm{v} &=0,\\
\rho_s \left(\pder{\bm{v}}{t} + \bm{v} \cdot \nabla \bm{v} \right) &= \di \sigma_s,\quad \sigma_s = -p\I_N + G\Bb^d,\\
\pder{\Bb}{t}+(\bm{v} \cdot \nabla)\Bb - (\nabla \bm{v}) \Bb - \Bb (\nabla \bm{v})^{\rm T} &= \mathbb{O}.
\end{align*}
Here and in the following $\mathbb{O}$ denotes the zero matrix. Now, since both the fluid and the solid are  described in the Eulerian frame of reference, we distinguish between the two simply by rheology. The formula for the Cauchy stress can be written in a unifying (essentially visco-elastic) manner as 
\[
\sigma \coloneqq -p\I_N + 2 \mu \mathbb{D}(\bm{v}) + G\Bb^d,
\]
where 
\[
\arraycolsep=3.4pt\def\arraystretch{2.2}
\begin{array}{ll}
\displaystyle \mu = \mu_f > 0, \ G = G_f = 0, \ \Bb = \I_N & \text{ in } \mathcal{F}(t), \\
\displaystyle \mu = \mu_s = 0, \ G = G_s > 0, \ \pder{\Bb}{t} + (\bm{v} \cdot \nabla) \Bb - (\nabla \bm{v}) \Bb - \Bb(\nabla\bm{v})^{\rm T} = \mathbb{O} & \text{ in } \B(t).
\end{array}
\]
Thus, the Cauchy stress $\sigma$ is equal to $\sigma_f$ in the fluid and to $\sigma_s$ in the solid. For the numerical implementation of the above model, we employ the conservative level-set method with reinitialization, which facilitates the tracking of the boundary between the fluid and the solid domain. In particular, we add a new scalar unknown $\chi$, defined via
\[
\chi(\bm{x}, t) \coloneqq \left\{
\begin{matrix}
1& \text{ if } \bm{x} \in \B(t),\\
0& \text{ if } \bm{x} \in \mathcal{F}(t),
\end{matrix}
\right.
\]
and which is smeared out so that it changes smoothly across an interfacial zone with  characteristic thickness $\e$. As the elastic solid moves in the fluid, the level set function $\chi$ is advected by the fluid velocity, and, in order to ensure stability of the method and a good resolution of the interfacial zone, the level set must also be reinitialized during the simulations (see for example \cite{Olsson2005}). The equations for these two processes read as follows:
\begin{equation*}\label{eq_levelset}
\frac{\partial \chi}{\partial t}+\bm{v}\cdot\nabla\chi=0,\qquad
\di\left[\chi(1-\chi)\frac{\nabla\chi}{|\nabla\chi|}\right]-\e\Delta\chi=0.
\end{equation*}
Note that the reinitialization smears out the level set function to the required diffuse profile. In one dimension, this reads 
\begin{equation}\label{diffuse_interface_tanh}
\chi = \frac12\left(1+\tanh\frac{x}{2\e}\right),
\end{equation}
so indeed the parameter $\e$ controls the thickness of the diffuse interface. Similarly, the material parameters $\rho, \mu$, and $G$ are prescribed to change smoothly across the interface by setting
\[
\rho = \chi\rho_s +(1-\chi)\rho_f,\quad \mu = \chi\mu_s+(1-\chi)\mu_f,\quad G = \chi G_s+(1-\chi)G_f.
\]
In order to reduce the complexity of the problem (enhanced by the necessity to solve for the evolution of the tensor $\Bb$), we simplified the model by assuming that the elastic deformations and the velocities are small (a valid assumption in the considered applications). Consequently, we omit the convective terms in the evolution equation for $\Bb$, and, in the same spirit, we assumed that $(\nabla \bm{v})\Bb \sim \nabla\bm{v}$ in the solid, while in the fluid the equation is regularized in such a way that the evolution equation for $\Bb$ can be solved. Thus, the model we solved numerically is the following
\begin{align*}
    \di \bm{v}&=0,\\
    \pder{\chi}{t} + \bm{v} \cdot \nabla \chi &=0,\\
    \rho \pder{\bm{v}}{t} &= \di \sigma,\quad \sigma = -p\I_N + 2 \mu \mathbb{D} + G \mathbb{B}^d,\\
    \pder{\Bb_s}{t} & = 2 \mathbb{D},\quad \pder{\Bb_f}{t} + \Bb_f - \I_N = \mathbb{O},\quad \Bb = (1-\chi)\Bb_f + \chi\Bb_s,
\end{align*}
equipped with the reinitialization to produce the smooth interface of $\xi$.
The problem is implemented with the finite element method in the open source finite element library FEniCS (\cite{Alnaes2015}) and discretized on a regular triangular mesh. Finally, the equation for $\Bb_s$ is solved locally and $\Bb_s$ is then inserted immediately into the balance equation of linear momentum. In the numerical implementation, the time derivatives are approximated with the backward Euler time scheme, that is,
\[
\pder{\Bb}{t} \sim \frac{\Bb - \Bb_0}{\Delta t},
\]
where $\Bb_0$ is the value of $\Bb$ at the previous time step and $\Delta t$ is a time step that is chosen adaptively according to the speed of the fluid from the previous time step in such a way that the CFL condition holds, that is
\[
\frac{\Delta t\, v_{\max}}{h_{\min}} = \frac12.
\]
Here $v_{\max}$ denotes the maximum value of the velocity magnitude and $h_{\min}$ is the minimum size of the element. The local integration of $\Bb$ gives
\begin{align*}
    \Bb_s & = \Bb_0 + 2 \mathbb{D} \Delta t,\\
    \Bb_f & = \frac{\I_N + \Bb_0 \Delta t}{1 + \Delta t},\\
    \Bb & = (1 - \chi) \Bb_f + \chi \Bb_s.
\end{align*}
Thus, the only global unknowns are the velocity $\bm{v}$, the pressure $p$ and the level-set function $\chi$. While velocity and pressure are approximated by the classical Taylor--Hood element P2/P1, the level-set function is approximated with the P2 element. The non-linearities are treated with the exact Newton method and the resulting set of linear equations is then solved with the direct solver MUMPS.

\subsubsection{Numerical results}\label{sec:FEM_results}
We have numerically investigated the rebound of an elastic ball in a viscous fluid environment. The radius of the ball considered is 0.2\,m and its center is initially located 0.5\,m from the bottom wall in a square container of size 0.8\,m. At the boundary of the container, no-slip boundary conditions are prescribed. The initial velocity of the ball is 0.5\,m/s (downwards) and the fluid is initially at rest. Throughout the simulation, body forces have been switched off. The following material parameters have been prescribed:
\begin{align*}
    \rho_f &=1.0\ \text{kg/m}^3,&\hspace*{-0cm} \rho_s &=1001.0\ \text{kg/m}^3,\\
    \mu_f &=0.1\  \text{Pa s},&\hspace*{-0cm} \mu_s &=0.0\ \text{Pa s},\\
    G_f &=0.0\ \text{Pa},&\hspace*{-0cm} G_s &=50\,000.0\ \text{Pa}.
\end{align*}

\begin{figure}[!h]
\begin{center}
\includegraphics[width=5cm]{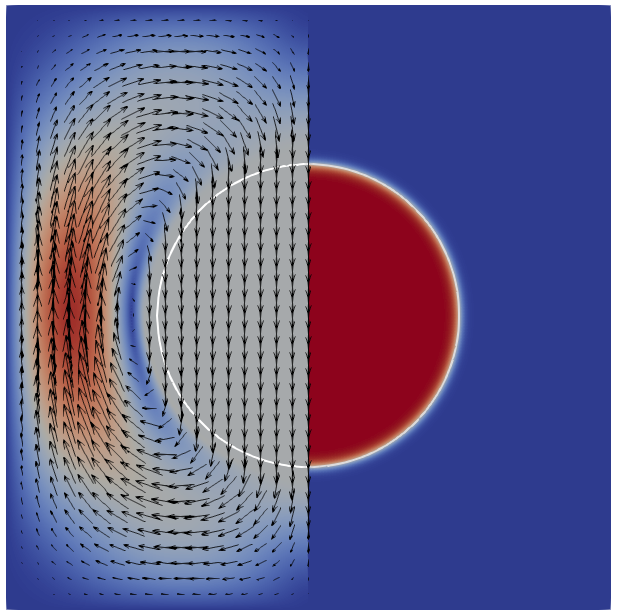}\hfill
\includegraphics[width=5cm]{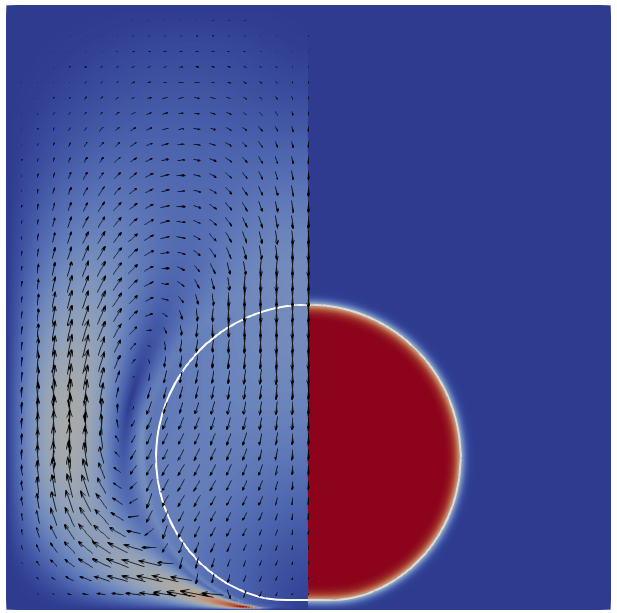}\hfill
\includegraphics[width=5cm]{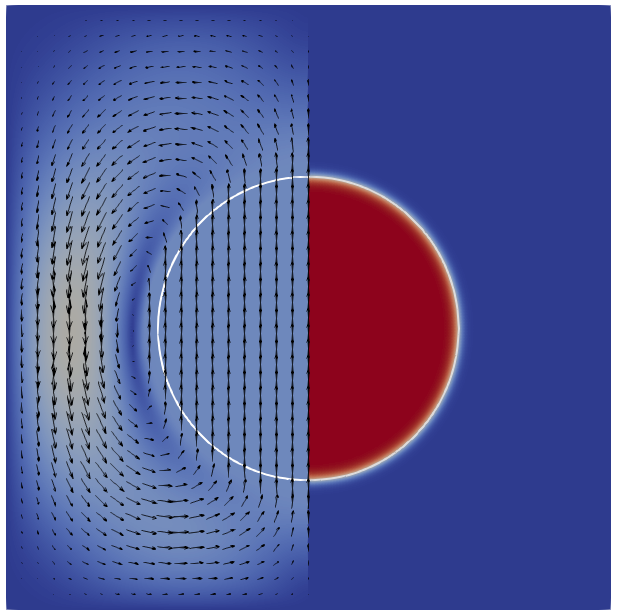}\\
\hspace*{2.3cm}(a)\hfill(b)\hfill(c)\hspace*{2.3cm}
\caption{Velocity (left side) and level-set (right side) at (a) moving down (b) rebound (c) moving up. The white contour depicts the interface where the value of level-set is equal to 0.5.}\label{fig_velocity}
\end{center}
\end{figure}

\begin{figure}[!h]
\begin{center}
\includegraphics[width=9cm]{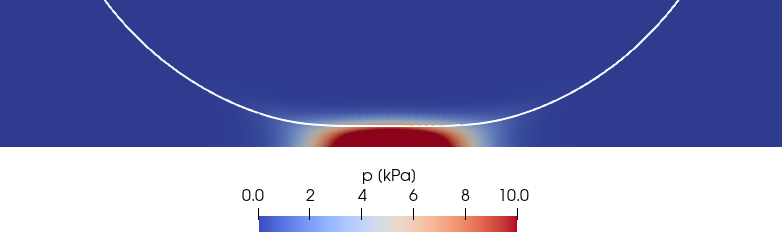}
\caption{Pressure in the fluid at the time of rebound.}\label{fig_pressure}
\end{center}
\end{figure}

\Cref{fig_velocity} shows the snapshots of the velocity and level-set fields at three time instances: moving down (panel a), during the rebound (panel b) and moving up (panel c). Since the viscosity considered is relatively high, the process is dissipative and the velocity magnitude is gradually decreasing with time. We remark that contact between the elastic ball and the wall never takes place -- it is indeed prevented by the development of a high-pressure region around the point where contact would normally be expected, see \Cref{fig_pressure}. The formation of this hydrodynamic pressure spike then facilitates the rebound. 

It is also worth noting how, in \Cref{fig_pressure}, the ball gets deformed, with the ``impacting'' face becoming very flat during the rebound phase. We fitted the shape of the interface at the bottom of the ball with the function
\[
y = d_1 + d_2 |x|^a,
\]
where, for simplicity, we fixed $d_2=1/(2R)=2.5$.  This choice of $d_2$ is optimal for a circle of radius $R$. The dependence of the exponent $a$ on $h$ is shown in \Cref{fig_exponent}, which demonstrates the significant flattening during the rebound, that is, we observe larger values of $a$ as the distance $h$ approaches its minimal value.

\begin{figure}[!h]
\begin{center}
\includegraphics[width=8cm]{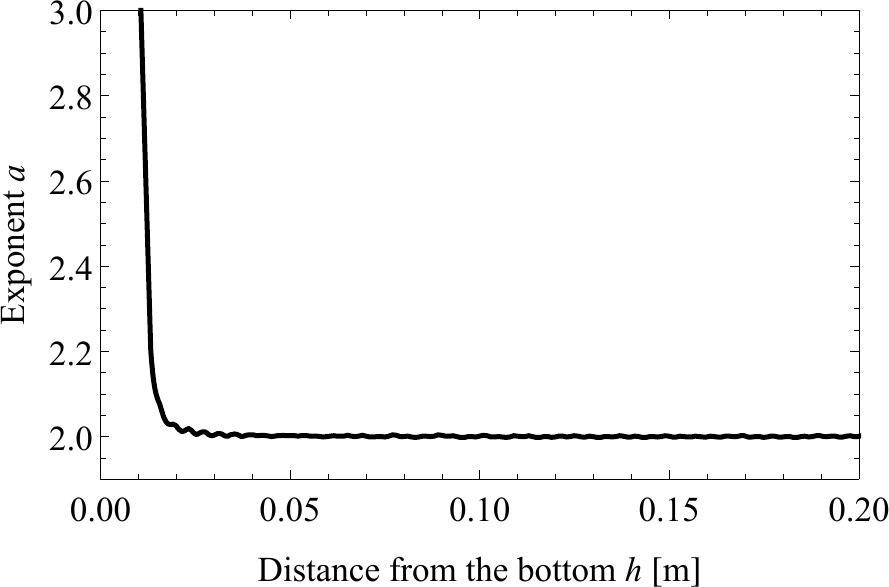}
\caption{The dependence of the exponent $a$ on $h$.}\label{fig_exponent}
\end{center}
\end{figure}

\subsection{A comparison of the two models} \label{sec:ODE-FEM_comparison}
Let us now compare in detail the numerical simulations performed for the reduced (effectively deformable) ODE model with the finite element solutions obtained for the full FSI problem. Throughout the section, we refer in particular to \Cref{fig:miracle}, where we display the distance to the wall $h$ as a function of time for both models and for several values of the viscosity parameter. It is worth noting that for the FSI model $h$ is defined as the distance between the wall and $0.5$-level set. Observe that with the choice of parameters made \Cref{sec:ODE_results}, the match between the two solutions is satisfactory in terms of the duration of the rebound phase and also regarding the mean body velocity after the rebound. 

A comment on the oscillatory behavior of $h$ after rebound in the ODE solutions is in order. While the oscillations are unmistakably due to the internal spring-mass system, which is effectively undamped for low viscosity values, it is worth mentioning that somewhat similar ``free oscillations'' were also observed in the simulations performed for the full FSI model for sufficiently small values of the shear modulus $G$ (not included in this paper). This can be seen as an indication that this particular feature of the ODE model should not be a-priori regarded as completely non-physical. 

Next, we note that the vertical offset of the solutions from the $x$-axis suggests that the finite element solutions bounce off at greater distances from the wall when compared to solutions of the ODE model. To some extent, this can be attributed to the effect of the level set approximation, i.e.\@ the diffuse interface between the ball and the fluid in the FSI model. Indeed, the fact that the material parameters are ``smeared'' over the diffuse interface of thickness $\e$ (see \eqref{diffuse_interface_tanh}) poses certain limitations on the minimal distance that (the $0.5$-level set of) the deformable structure may reach. Despite this issue, we are confident that for our choices of $\e$, spatial resolution, and high-enough viscosity $\mu$, the rebound due to the pressure singularity is not a mere artifact of the diffuse interface approach.

\section{Summary and concluding remarks}
\label{ref:conclusion}
In this paper, we investigate how serious and physically relevant is the so called no-contact paradox, that is, the absence of body-body and body-wall topological contact for elastic particles in an incompressible Stokes fluid with no-slip boundary conditions imposed on all boundaries. We were driven partially by the question whether the no-slip boundary condition in fluid-structure interaction problems must be avoided and branded as non-physical, or whether it can be redeemed somehow. We believe that we have provided an affirmative answer to the latter question, as we have shown that even in the absence of topological contact between an elastic body in motion towards a rigid wall, an effective rebound can be achieved, which is physical in the sense that it withstands the vanishing viscosity limit.

It is known and it has been proved rigorously that neither topological contact nor rebound are possible for perfectly rigid bodies (for a demonstration of this phenomenon see, for example, \Cref{Fig:rigid_FEM}; see also \Cref{sec:rigid-body} and \Cref{rigid-no-rebound}). On the other hand, the inclusion of elastic deformations of the solid bodies has been hypothesized as a promising ingredient towards obtaining a physical rebound. We tried to follow this path, yet, to simplify the notoriously difficult fluid-structure interaction problem, we devised a simplified ODE model (see \Cref{red-model-subsec}) which captures the features that we believe to be essential. 

The model comprises a ball (immersed in an incompressible Stokes fluid with no-slip boundary conditions prescribed both on the boundary of the container and on the fluid-solid interface) which is moving towards a rigid wall. As a simplified model of elasticity, we introduced a single scalar internal parameter $\xi$, which can be visualized as the elongation of a spring attached to a certain mass within the ball (see \Cref{fig-spring-mass-model}). In this setting, when the ball is subjected to the drag and internal push-and-pull from the spring, we have proved that contact cannot happen in finite time for any value of the viscosity parameter, and moreover that there is no rebound in the vanishing viscosity limit (see \Cref{sec:rigid-body}; see also \Cref{mainNB} and \Cref{cor:1}). In view of this fact, we conjectured that the internal mechanical energy storage alone is not a sufficient mechanism to ensure particle rebound.

As a next step in our analysis, we have investigated how allowing for deformations of the solid body changes the picture. This was achieved by coupling the internal deformation parameter with the drag formula. As a model case, we considered a one-parameter family of graphs describing the near-to-contact shape of the solid body by a general power function of the form
$$
y = h + c |x|^\alpha,
$$
where $h$ is the distance of the closest point to the wall, while $c$ and $\alpha$ are parameters possibly depending on the elastic deformation, i.e., we take $c = c(\xi)$ and $\alpha = \alpha(\xi)$. We have derived the corresponding parameterization of the drag force exerted by the fluid on the ball for such ``deformed'' configurations as they approach the wall (see \Cref{app-estimates-stokes}). These formulas are consistent with the standard lubrication (Reynolds') theory (see \Cref{drag-reynolds-app} and \Cref{appendix-Rey}) and read
\[
\mathcal{D}(h, \xi) \sim h^{-\frac{3\alpha(\xi)}{1+\alpha(\xi)}} \quad \text{ if } N = 2, \hspace{1cm} \mathcal{D}(h, \xi) \sim h^{\frac{1-3\alpha(\xi)}{1+\alpha(\xi)}} \quad \text{ if } N = 3.
\]
It is worth noting that in the context of the standard Hertz theory of contact, the shape of the solid body does not change dramatically in the sense that ``spheres deform to ellipses'', so that the shape exponent $\alpha$ remains unaltered. Inspired however by real-world observations, where much more dramatic changes in the ``flatness'' of an impacting body are often observed, we relaxed the assumption of Hertz theory that $\alpha$ is constant and allowed it to change according to the elastic deformation described by $\xi$. 

Surprisingly, this appears to be the key missing ingredient -- the feature that allows to reproduce a physically meaningful rebound. Indeed, in \Cref{sec:math} we have proved the possibility of a rebound that withstands the vanishing viscosity limit. Furthermore, our proofs are supplemented with numerical simulations of the ODE system (see \Cref{fig:giovanni}). It is worth noting that the reduced model can predict rebound while incorporating at the same time the defining feature of our problem, that is, the lack of topological contacts. This is a direct consequence of the fact that, in view of the imposed no-slip boundary conditions, the drag force exerted by the fluid blows up as the distance of the body from the wall approaches zero. 

Not only the ODE model admits a rigorous analysis of the effective rebound process, but, despite its apparent simplicity, it also shows a striking capability to reproduce qualitative characteristics of the rebound process when compared to finite element simulations of the full fluid-structure interaction problem (see \Cref{fig:miracle}). This gives us the confidence to consider the rigorously proved result for the ODE model as a reliable proof-of-concept for the general fluid-structure interaction problem outlined in \Cref{section:full-FSI} and to strengthen our main conjecture from the Introduction, i.e.\@, the claim that {\em a qualitative change in the flatness of the solid body as it approaches the wall, together with some elastic energy storage mechanism within the body, allows for a physically meaningful rebound even in the absence of topological contact.}

\subsection*{Acknowledgements}
The work of the authors was supported by the University Centre of Charles University. The research of G.\@ Gravina, S.\@ Schwarzacher and K.\@ T\r{u}ma was partially funded by the Czech Science Foundation (GA\v{C}R) under Grant No.\@ GJ19-11707Y. G.\@ Gravina and  S.\@ Schwarzacher further acknowledge the support of the Primus Programme of Charles University under Grant No.\@ PRIMUS/19/SCI/01. The authors would also like to thank B.\@ Bene\v{s}ova, M.\@ Kampschulte, and M.\@ Hillairet for helpful discussions on the subject.

\appendix
\section{Drag force estimates}
The material in this appendix is meant to complement our treatment of the drag force in \Cref{drag-section} by providing a proof of the analytical estimates and a derivation of the drag force predicted by the lubrication approximation theory. 

\subsection{The drag force based on the variational formulation}
\label{app-estimates-stokes}
We begin this first part of the appendix with the proof of \Cref{2&3D-LB}. The argument we present here is directly adapted from the proof of Lemma 3 in \cite{MR2354496}.

\begin{proof}[Proof of \Cref{2&3D-LB}]
We divide the proof into two steps.
\newline
\textbf{Step 1:} Assume first that $N = 2$. Then, by a density argument, it is enough to show that for every $\bm{v} \in C_c^{\infty}(\RR^2_+; \RR^2) \cap V_h$
\[
c_1 \le h^{\frac{3 \alpha}{1 + \alpha}} \|\nabla \bm{v} \|_{L^2}^2, 
\]
where $c_1$ is a positive constant independent of $\bm{v}$. To see this, using the notation introduced in \eqref{gh+}, we define
\[
\mathcal{F}_h(\delta) \coloneqq \{(x_1, x_2) : |x_1| < \delta, 0 < x_2 < g(|x_1|)\} \subset \mathcal{F}_h
\] 
and integrate $\di \bm{v} = 0$ in $\mathcal{F}_h(\delta)$ to obtain
\[
\int_{\partial \mathcal{F}_h(\delta) \cap \partial \B_h} \bm{v} \cdot \bm{n} \,d\mathcal{H}^1 = - \int_{\mathcal{F}_h(\delta) \cap \{|x_1| = \delta\}} \bm{v} \cdot \bm{n} \, d\mathcal{H}^1.
\]
Since $\bm{v} = \bm{e}_2$ on $\partial \B_h$, we see that
\[
\mathcal{L} \coloneqq \int_{\partial \mathcal{F}_h(\delta) \cap \partial \B_h} \bm{v} \cdot \bm{n} \,d\mathcal{H}^1 = \int_{\partial \mathcal{F}_h(\delta) \cap \partial \B_h} n_2 \,d\mathcal{H}^1 = 2\delta,
\]
where the last equality is obtained via a direct computation, parameterizing the domain of integration. Similarly, but also using the fact that $g(\delta) = g(- \delta)$, we obtain that
\[
\mathcal{R} \coloneqq \int_{\mathcal{F}_h(\delta) \cap \{|x_1| = \delta\}} \bm{v} \cdot \bm{n} \, d\mathcal{H}^1 = \int_0^{g(\delta)} \left(v_1(\delta, x_2) - v_1(- \delta, x_2)\right)\,dx_2,
\] 
and an application of H\"older's and Poincar\'e's inequalities yields
\begin{align*}
\int_0^{g(\delta)}|v_1(\delta, x_2) - v_1(- \delta, x_2)|\,dx_1 & \le g(\delta)^{1/2}\|v_1(\delta, \cdot) - v_1(- \delta, \cdot)\|_{L^2((0, g(\delta)); \RR^2)} \\
& \le g(\delta)^{3/2}\left\|\frac{\partial v_1}{\partial x_2}(\delta, \cdot) - \frac{\partial v_1}{\partial x_2}(- \delta, \cdot)\right\|_{L^2((0, g(\delta)); \RR^2)}.
\end{align*}
In turn, 
\[
2\delta = \mathcal{L} \le |\mathcal{R}| \le \sqrt{2} g(\delta)^{3/2} \left(\int_0^{g(\delta)}\left(|\nabla v_1(\delta, x_2)|^2 + |\nabla v_1(- \delta, x_2)|^2\right)\,dx_2\right)^{1/2}.
\]
Integrating the previous inequality over $\delta \in (0, r)$ yields
\begin{align*}
r^2 & \le  \sqrt{2}  \sup_{\delta \in (0, r)} \left\{ g(\delta)^{3/2}\right\} \int_0^r \left(\int_0^{g(\delta)}\left(|\nabla v_1(\delta, x_2)|^2 + |\nabla v_1(- \delta, x_2)|^2\right)\,dx_2\right)^{1/2}\,d\delta \\
& \le \sqrt{2}  \sup_{\delta \in (0, r)} \left\{ g(\delta)^{3/2}\right\} r^{1/2} \left(\int_0^r \int_0^{g(\delta)}\left(|\nabla v_1(\delta, x_2)|^2 + |\nabla v_1(- \delta, x_2)|^2\right)\,dx_2 d\delta \right)^{1/2} \\
& = \sqrt{2}  \sup_{\delta \in (0, r)}\left\{ g(\delta)^{3/2}\right\} r^{1/2} \|\nabla \bm{v}\|_{L^2},
\end{align*}
where in the second to last step we have used H\"older's inequality. Consequently, recalling that $g$ is given as in \eqref{gh+}, if we let $r = h^{1/(1 + \alpha)}$ we obtain
\begin{align*}
\frac{1}{\sqrt{2}} & \le \left(\frac{\sup\left\{h + \gamma \delta^{1 + \alpha} : \delta \in (0, r)\right\}}{r} \right)^{3/2} \|\nabla \bm{v} \|_{L^2} = (1 + \gamma)^{3/2}h^{\frac{3\alpha}{2(1 + \alpha)}}\|\nabla \bm{v} \|_{L^2}.
\end{align*}
The desired result readily follows.
\newline
\textbf{Step 2:} Now, assume that $N = 3$. Reasoning as in the previous step, but with the aid of cylindrical coordinates $(\delta, \theta, z)$, we readily deduce that
\[
\pi \delta^2 \le g(\delta)^{3/2} \delta \int_0^{2\pi} \left(\int_0^{g(\delta)}|\nabla \bm{v}|(\delta, \theta, z)|^2\,dz\right)^{1/2}\,d\theta
\]
holds for every $\bm{v} \in C_c^{\infty}(\RR^3_+; \RR^3) \cap V_h$. Thus, integrating the previous inequality over $\delta \in (0, r)$ and by means of H\"older's inequality, we get
\begin{align*}
\frac{\pi r^3}{3} & \le \sup_{\delta \in (0, r)} \left\{g(\delta)^{3/2} \delta^{1/2}\right\}  \int_0^r \delta^{1/2} \int_0^{2\pi} \left(\int_0^{g(\delta)}|\nabla \bm{v}|(\delta, \theta, z)|^2\,dz\right)^{1/2}\,d\theta d\delta \\
& \le \sqrt{2 \pi} \sup_{\delta \in (0, r)} \left\{g(\delta)^{3/2} \delta^{1/2}\right\} r^{1/2} \|\nabla \bm{v}\|_{L^2}.
\end{align*}
Therefore, setting once again $r = h^{1/(1 + \alpha)}$, we get
\[
\frac{1}{3}\sqrt{\frac{\pi}{2}} \le \sup_{\delta \in (0, r)} \left\{g(\delta)^{3/2}\right\} r^{-2} \|\nabla \bm{v}\|_{L^2} \le (1 + \gamma)^{3/2}h^{\frac{3 \alpha - 1}{2(1 + \alpha)}}\|\nabla \bm{v}\|_{L^2}.
\]
This concludes the proof.
\end{proof}

Next, we turn our attention to the proof of \Cref{2&3D-UB}, which we only sketch here. Recalling that by definition $D(h) = \min \left\{\J(\bm{u}; \mathcal{F}_h) : \bm{u} \in V_h\right\}$, the conclusions of \Cref{2&3D-UB} follow if we can exhibit a competitor, namely $\bm{w}_h \in V_h$, for which $\J(\bm{w}_h; \mathcal{F}_h)$ is bounded by the right-hand side of \eqref{drag-UB}. To achieve this, one has to construct a velocity field which allows for the fluid to escape the aperture in between the solid body and the boundary of the container in a nearly optimal way. For $N = 2$, such a construction was carried out by G\'erard-Varet and Hillairet (see Section 4.1 and Proposition 8 in \cite{MR2592281}). Their argument is adapted from the analogous construction for a two-dimensional disk, due to Hillairet (see Section 4 in \cite{MR2354496}). The construction for $N = 3$ is due to Hillairet and Takahashi (see Section 3.1 in \cite{MR2481302}) for a sphere, and can be suitably modified for the more general shapes that we consider in this paper.

\subsection{The drag force based on the Reynolds approximation}
\label{appendix-Rey}
In this second part of the appendix, we are interested in approximating the drag force exerted on a particle immersed in a Newtonian fluid, which is moving towards a rigid wall, when both the wall and the fluid-solid interface are subjected to no-slip boundary conditions. Considering an axi-symmetric situation (as in \Cref{fig_velocity}), a good approximation can be found by calculating and integrating the pressure under the solid body, which indeed represents the major contribution to the drag force \cite{Leal1992}. The pressure profile can be estimated using the so-called lubrication, or Reynolds', approximation. This yields the following ODE (see eq.\@ (7-256) in \cite{Leal1992} for $N = 2$; see eq.\@ (4.22) in \cite{Bassani1992} for $N = 3$):
\[
\frac{d}{dr}\left( r^{N - 2} g^3 \frac{dp}{dr} \right) = 12 \mu r^{N - 2} \dot{h} \hspace{1cm}N = 2, 3,
\]
where $r$ is the distance from the symmetry axis, and $g$ is defined as in \eqref{gh+}. Integrating the previous equation from 0 to $r'$, and recalling that by assumption the particle is axi-symmetric, yields 
\[
\frac{dp}{dr}(r') = 12 \mu \dot{h} \frac{r'}{g(r')^3(N - 1)}\hspace{1cm}N = 2, 3.
\]
Integrating now between $r$ and $R$ we obtain
\[
p(r) - p(R) = - \frac{12 \mu \dot{h}}{N - 1}\int_r^R \frac{r'}{g(r')^3}\,dr'.
\]
Assuming $R \gg 1$ then the pressure difference on the left-hand side corresponds to the actual dynamic pressure, which constitutes the main contribution to the drag force. Integrating the pressure difference over the surface of the solid body yields the (pressure contribution) to the drag force
\[
\tilde{\bm{F}}_{\operatorname{lub}} \coloneqq - \int_\Gamma \left(p(r) - p(R)\right) \bm{n} \,d\mathcal{H}^{N - 1},
\]
where $\bm{n}$ is the outer unit normal to $\Gamma$ (pointing inside the fluid). By symmetry, we only need to evaluate the vertical component of the force, since all other components are zero:
\[
\tilde{F}_{\operatorname{lub}} \coloneqq \tilde{\bm{F}}_{\operatorname{lub}} \cdot \bm{e}_N = - \int_\Gamma \left(p(r) - p(R)\right) \bm{n} \,d\mathcal{H}^{N - 1} \cdot \bm{e}_N.
\]
Letting $R \to \infty$ in the expression above yields 
\[
F_{\operatorname{lub}} \coloneqq \lim_{R \to \infty} \tilde{F}_{\operatorname{lub}} = -12\mu\dot{h} 
\left\{
\arraycolsep=3.4pt\def\arraystretch{2.2}
\begin{array}{ll}
\displaystyle 2\int_{0}^\infty\int_{r}^\infty \frac{r'}{g(r')^3} \,dr'dr & \text{ if } N = 2, \\
\displaystyle \pi \int_{0}^\infty \int_{r}^\infty \frac{r r'}{g(r')^3} \,dr'dr & \text{ if } N = 3,
\end{array}\right.
\]
which is the desired approximation.

\bibliographystyle{siam}
\bibliography{References}
\end{document}